\renewcommand\footnotemark{}
\begin{document}

\title{A vertex and edge deletion game on graphs}

\date{}

\author{
  Cormac ~O'Sullivan\footnote{{\it Date:} Aug 17, 2018.
  \newline \indent \ \ \
  {\it 2010 Mathematics Subject Classification:} 91A46, 05C38.
  \newline \indent \ \ \
  {\it Key words and phrases:} impartial games, graphs, nim-values.
  \newline \indent \ \ \
Support for this project was provided by a PSC-CUNY Award, jointly funded by The Professional Staff Congress and The City
\newline \indent \ \ \
University of New York.}
  }

\maketitle

\def\s#1#2{\langle \,#1 , #2 \,\rangle}

\def\H{{\mathbf{H}}}
\def\F{{\frak F}}
\def\C{{\mathbb C}}
\def\R{{\mathbb R}}
\def\Z{{\mathbb Z}}
\def\Q{{\mathbb Q}}
\def\N{{\mathbb N}}
\def\G{{\Gamma}}
\def\GH{{\G \backslash \H}}
\def\g{{\gamma}}
\def\L{{\Lambda}}
\def\ee{{\varepsilon}}
\def\K{{\mathcal K}}
\def\Re{\mathrm{Re}}
\def\Im{\mathrm{Im}}
\def\PSL{\mathrm{PSL}}
\def\SL{\mathrm{SL}}
\def\Vol{\operatorname{Vol}}
\def\lqs{\leqslant}
\def\gqs{\geqslant}
\def\sgn{\operatorname{sgn}}
\def\res{\operatornamewithlimits{Res}}
\def\li{\operatorname{Li_2}}
\def\lip{\operatorname{Li}'_2}
\def\pl{\operatorname{Li}}
\def\nb{{\mathcal B}}
\def\cc{{\mathcal C}}
\def\nd{{\mathcal D}}
\def\dd{\displaystyle}

\def\n{\mathrm{g}}
\def\f{\phi}
\def\mex{\mathrm{mex}}

\def\clp{\operatorname{Cl}'_2}
\def\clpp{\operatorname{Cl}''_2}
\def\farey{\mathscr F}

\newcommand{\stira}[2]{{\genfrac{[}{]}{0pt}{}{#1}{#2}}}
\newcommand{\stirb}[2]{{\genfrac{\{}{\}}{0pt}{}{#1}{#2}}}
\newcommand{\norm}[1]{\left\lVert #1 \right\rVert}


\newtheorem{theorem}{Theorem}[section]
\newtheorem{lemma}[theorem]{Lemma}
\newtheorem{prop}[theorem]{Proposition}
\newtheorem{conj}[theorem]{Conjecture}
\newtheorem{cor}[theorem]{Corollary}
\newtheorem{assume}[theorem]{Assumptions}
\newtheorem{adef}[theorem]{Definition}


\newcounter{counrem}
\newtheorem{remark}[counrem]{Remark}

\renewcommand{\labelenumi}{(\roman{enumi})}
\newcommand{\spr}[2]{\sideset{}{_{#2}^{-1}}{\textstyle \prod}({#1})}
\newcommand{\spn}[2]{\sideset{}{_{#2}}{\textstyle \prod}({#1})}

\numberwithin{equation}{section}

\bibliographystyle{alpha}

\begin{abstract}
Starting with a graph, two players take turns in either deleting  an edge or deleting a vertex and all incident edges. The player removing the last vertex wins. We review the known results for this game and extend the computation of nim-values to new families of graphs. A conjecture of Khandhawit and  Ye on the nim-values of graphs with one odd cycle is proved. We also see that, for wheels and their subgraphs, this game exhibits a surprising amount of unexplained regularity.
\end{abstract}

\section{Introduction}
Let $G=(V(G),E(G))$ be a finite graph with  vertices $V(G)$ and edges $E(G)$. We allow loops and multiple edges. This is the starting position for the game of {\em graph take-away} (or graph chomp) and its rules are as follows.  Two players take turns in either deleting an edge or deleting a vertex and all incident edges. The player removing the last vertex wins. This impartial game has been studied in \cite{FS,DT,CT,DR,Ri} and most recently \cite{Y}. It is a special case of the general games on partially ordered sets introduced by Gale and Neyman in \cite{GN}; see for example the introductions of \cite{B,FR} for more of their history. The questions in  \cite{GN} were recently answered negatively in \cite{BC}.

For each graph  $G$ we would like to know whether the player going first or the player going second has a winning strategy. According to the Sprague-Grundy theory \cite[Chap. 11]{onag}, each of these graph games has a   nim-value $\n(G)$, and the second player has a winning strategy exactly when $\n(G)=0$. The nim-value of a disjoint union $H_1 \cup H_2$ of two graphs  may be easily calculated from the individual nim-values of $H_1$ and $H_2$ using nim-addition as reviewed in Section \ref{basic}.

\SpecialCoor
\psset{griddots=5,subgriddiv=0,gridlabels=0pt}
\psset{xunit=0.7cm, yunit=0.7cm, runit=0.7cm}
\psset{linewidth=1pt}
\psset{dotsize=5pt 0,dotstyle=*}

\begin{figure}[ht]
\centering
\begin{pspicture}(0,-4)(19.7,1.5) 

\psset{arrowscale=2,arrowinset=0.5}

\psline(0,0)(1,0)(1,1)(0,0)
\psline(8,0)(7,0)(8,1)
\psline(9,0)(10,0)
\psdots(0,0)(1,0)(1,1)(0,0)(8,0)(7,0)(8,1)(9,0)(10,0)
\rput(0.5,-0.7){$G$}

\psline[linecolor=orange]{->}(2.1,0.5)(5.5,0.5)
\rput(3.8,-0.3){First player}
\rput(6.6,0.5){$\Bigg\{$}
\rput(10.4,0.5){$\Bigg\}$}
\rput(8.5,0){\LARGE ,}

\psline[linecolor=orange]{->}(9.5,-0.5)(9.5,-1)(12.5,-1)
\rput(13.6,-1){$\Bigg\{$}
\rput(17.4,-1){$\Bigg\}$}
\psdots(14.5,-1.5)(16,-1.5)(17,-1.5)
\rput(15.5,-1.5){\LARGE ,}

\rput(10.5,-2){Second player}

\psline[linecolor=orange]{->}(7.5,-0.5)(7.5,-3)(12.5,-3)
\rput(13.6,-3){$\Bigg\{$}
\rput(19.4,-3){$\Bigg\}$}
\psline(14,-3.5)(15,-3.5)
\psline(16,-3.5)(17,-3.5)
\rput(15.5,-3.5){\LARGE ,}
\rput(17.5,-3.5){\LARGE ,}
\psdots(14,-3.5)(15,-3.5)(15,-2.5)(16,-3.5)(17,-3.5)(18.5,-3.5)(18.5,-2.5)

\end{pspicture}
\caption{A game of graph take-away on a triangle}
\label{game-tri}
\end{figure}
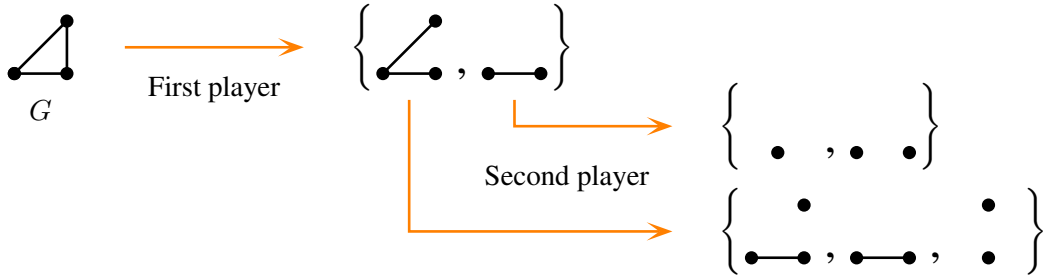

Figure \ref{game-tri} shows the example of a simple game where the starting graph $G$ is a triangle. The first player has six possible moves, giving the two non-isomorphic options listed. The second player's possible replies are listed on the right. The last option in each list, two isolated vertices, is a winning move for player two since they are assured of taking the last vertex in this case. Hence, the player going second has a winning strategy and the nim-value of the triangle $G$ is $0$.

The basic notation and definitions of graph theory we use in this paper are contained in \cite{BM-graph}, for example.
If $G$ is bipartite (two-colorable) then Fraenkel and Scheinerman showed in \cite{FS} that the winning strategy is to restore the number of vertices and the number of edges to even parity.
Using their notation, for any integer $k$ let $k_{(m)}:= k \bmod m$ with  $k_{(m)} \in \{0,1,\dots,m-1\}$. Writing $|V(G)|$ and $|E(G)|$ for the number of vertices and edges of $G$, we define a parity function
$$
\f(G):= |V(G)|_{(2)}+2\bigl( |E(G)|_{(2)}\bigr)
$$
so that $\f(G)$ is $0,$ $1,$ $2$ or $3$.

\begin{prop}{\rm \bf \cite[Cor. 2.2]{FS}} \label{par}
If $G$ is bipartite then $\n(G) = \f(G)$.
\end{prop}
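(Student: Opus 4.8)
The plan is to argue by induction on $|V(G)|+|E(G)|$, using the Sprague--Grundy characterization $\n(G)=\mex\{\,\n(G'):G'\text{ an option of }G\,\}$, where the options are the positions reached by deleting a single edge or a single vertex with its incident edges. The decisive structural observation is that every option $G'$ is a subgraph of $G$, hence again bipartite; since deleting an edge or a vertex strictly decreases $|V(G)|+|E(G)|$, the inductive hypothesis applies and gives $\n(G')=\f(G')$ for every option. The base case is the empty graph, where $\f=0=\n$. Thus it suffices to prove that $\mex\{\,\f(G'):G'\text{ an option of }G\,\}=\f(G)$.

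First I would record how the two-bit quantity $\f$ transforms under one move, reading $\f$ as the pair of parities $\bigl(|V(G)|_{(2)},|E(G)|_{(2)}\bigr)$. Deleting an edge fixes $|V(G)|$ and flips the parity of $|E(G)|$, sending $\f$ to $\f\oplus 2$. Deleting a vertex of degree $d$ flips the parity of $|V(G)|$ and flips that of $|E(G)|$ precisely when $d$ is odd; so removing an even-degree vertex sends $\f$ to $\f\oplus1$ and removing an odd-degree vertex sends $\f$ to $\f\oplus3$. In particular \emph{every} legal move changes the value of $\f$, so $\f(G)$ itself never occurs among the values $\f(G')$. Together with $\n(G')=\f(G')$ this already yields $\mex\{\f(G')\}\le\f(G)$.

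It remains to realize each value $v\in\{0,1,\dots,\f(G)-1\}$ as some $\f(G')$. For such $v$ the difference $t:=\f(G)\oplus v$ lies in $\{1,2,3\}$ and names exactly one of the three move-types above, so I must check that a move of that type is available. Running through the four parity classes of $G$, this reduces to three existence facts: an edge is present whenever $|E(G)|$ is odd; a vertex of even degree is present whenever $|V(G)|$ is odd (by the handshake lemma the odd-degree vertices are even in number, so they cannot fill an odd vertex set); and---the only place bipartiteness is used---a vertex of odd degree is present whenever $G$ is bipartite with $|E(G)|$ odd, since for a colour class $A$ one has $\sum_{a\in A}\deg(a)=|E(G)|$, forcing some $a\in A$ to have odd degree.

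The work is therefore organizational rather than deep: one tabulates, for each of the four values of $\f(G)$, that the specific moves needed to hit all smaller values are simultaneously legal, the sole genuinely graph-theoretic input being the bipartite degree-sum identity producing an odd-degree vertex. Granting these checks, $\mex\{\f(G')\}=\f(G)$ in every case, and the induction closes. I expect the bipartite odd-degree step to be the one subtle point, as it is exactly what fails for non-bipartite graphs such as a triangle.
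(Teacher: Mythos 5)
Your proposal is correct: the parity-transformation table ($\f\mapsto\f\oplus 2$, $\f\oplus 1$, $\f\oplus 3$ for the three move types), the observation that $\f(G)$ is never an option value, and the three existence checks (edge when $|E|$ is odd; even-degree vertex when $|V|$ is odd via the handshake lemma; odd-degree vertex when $G$ is bipartite with $|E|$ odd via the colour-class degree sum) are exactly what is needed to close the mex computation by induction. The paper itself gives no proof of this proposition --- it is quoted from Fraenkel and Scheinerman --- but your argument is the standard one and is the nim-value refinement of the strategy the paper attributes to them, namely restoring both the vertex count and the edge count to even parity.
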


This result was also proved in \cite[Thm. 3]{Y}, with the special case of forests proved in \cite[Thm. 2]{DR}. A more complicated version of Proposition \ref{par} also appeared in \cite[Chap. 5]{Ri}. It follows easily that a tree $T$ has nim-value $|E(T)|_{(2)}+1$ and a cycle graph $C_n$ of length $n \gqs 2$  has nim-value $0$.
A one-cycle (i.e. a vertex with a loop attached) has nim-value $2$. Because of this difference, we must treat loops and longer cycles differently. Consequently, cycles  in this paper refer to cycles of length at least $2$.

\SpecialCoor
\psset{griddots=5,subgriddiv=0,gridlabels=0pt}
\psset{xunit=0.7cm, yunit=0.7cm, runit=0.7cm}
\psset{linewidth=1pt}
\psset{dotsize=5pt 0,dotstyle=*}

\begin{figure}[ht]
\centering
\begin{pspicture}(0,0)(17.5,4) 

\psset{arrowscale=2,arrowinset=0.5}

\psellipse(2,2)(2,1)
\psellipse(7,3)(1.5,0.75)
\psellipse(7,1)(1.5,0.75)
\psellipse(15.5,2)(2,1)

\psdots(4,2)(5.5,3)(5.5,1)(17.5,2)

\psline(4,2)(5.5,3)
\psline(4,2)(5.5,1)
\psline{->}(10,2)(12,2)

\rput(2,2){$G'$}
\rput(7,3){$H_1$}
\rput(7,1){$H_2$}
\rput(15.5,2){$G'$}

\rput(4.2,2.5){$s$}
\rput(5.4,3.5){$v_1$}
\rput(5.4,1.5){$v_2$}
\rput(4.2,0.5){$G$}

\rput(17.8,2.4){$s$}

\end{pspicture}
\caption{Cancellation when $H_1 \cong H_2$}
\label{cancel}
\end{figure}
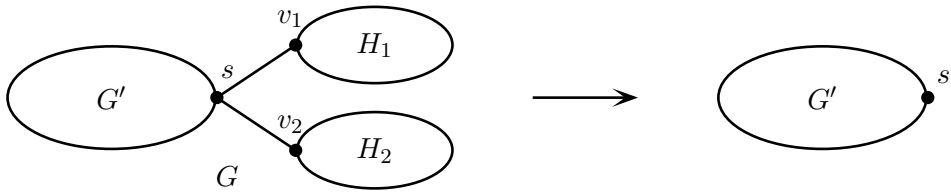

A situation where the  nim-value of a graph may be obtained from a simpler graph is described as follows.  Let $H_1$ and $H_2$ be isomorphic graphs containing corresponding vertices $v_1$ and $v_2$ respectively. Let $G'$ be another graph containing a vertex $s$. Build $G$ from the disjoint graphs $G',$ $H_1$ and $H_2$ by adding the edges $sv_1$ and $sv_2$ as shown in Figure \ref{cancel}. In this situation we say that $G$ has {\em cancellation at $s$} and may be replaced by $G'$ since, as we see in Section \ref{basic}, $\n(G)=\n(G')$.
A graph is {\em reduced} if no cancellation is possible.

Graphs that are not bipartite must contain a loop or an odd cycle. It is reasonable to expect that some non-bipartite graphs $G$ will also have $\n(G)=\f(G)$ if a strategy of eliminating odd cycles can be used.
To determine the nim-values of  graphs with exactly one odd cycle we need to introduce the next definition.

\SpecialCoor
\psset{griddots=5,subgriddiv=0,gridlabels=0pt}
\psset{xunit=0.7cm, yunit=0.7cm, runit=0.7cm}
\psset{linewidth=1pt}
\psset{dotsize=5pt 0,dotstyle=*}
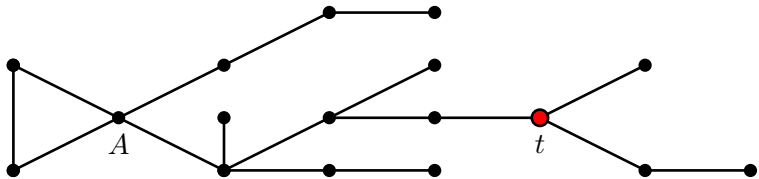
\begin{figure}[ht]
\centering
\begin{pspicture}(4,-0.5)(18,3.5) 

\psset{arrowscale=2,arrowinset=0.5}

\psline(4,2)(4,0)(6,1)(8,2)(10,3)(12,3)
\psline(4,2)(8,0)(10,1)(12,1)(14,1)(16,0)(18,0)
\psline(8,1)(8,0)(10,0)(12,0)
\psline(10,1)(12,2)
\psline(14,1)(16,2)

\psdots(4,2)(4,0)(6,1)(8,2)(10,3)(12,3)(4,0)
\psdots(4,2)(8,0)(10,1)(12,1)(14,1)(16,0)(18,0)(8,1)(8,0)(10,0)(12,0)(12,2)(16,2)

\pscircle[fillstyle=solid,fillcolor=red](14,1){0.18}

\rput(6,0.5){$A$}
\rput(14,0.5){$t$}

\end{pspicture}
\caption{A telescoping vertex $t$}
\label{teleeg}
\end{figure}

\begin{adef}{\rm
Suppose a tree $T$ is attached to  an odd cycle  at  vertex $A$. A vertex of $T$ is {\em telescoping} if, when it is deleted and the resulting graph reduced, all that remains of $T$ that is still connected to the cycle is $A$. }
\end{adef}

Note that when we say that a graph $G_1$ is {\em attached at $v$} to the graph $G_2$, we mean that $G_1\cap G_2$ is the vertex $v$.
In the example in Figure \ref{teleeg},  the tree attached to the $3$-cycle at $A$ has one telescoping vertex as indicated. The properties of telescoping vertices do not seem to have appeared before in the literature, that the author is aware of, and we will see that their study   becomes quite intricate. The main part of this paper, in Sections \ref{st}, \ref{unique} and \ref{1cyc}, establishes the next result.

\begin{theorem} \label{AB2}
Let $G$ be a reduced graph consisting of an odd cycle attached to a tree at one vertex.  Then
\begin{equation*}
  \n(G)= \left\{
           \begin{array}{ll}
            0 & \hbox{if $G$ is just a cycle;} \\
             4 \text{ or more} & \hbox{if there is a telescoping vertex of odd degree;} \\
             \f(G) & \hbox{otherwise.}
           \end{array}
         \right.
\end{equation*}
\end{theorem}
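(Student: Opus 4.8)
The plan is to induct on the number of edges of the tree $T$, computing $\n(G)$ as the $\mex$ of the nim-values of its options. Two preliminary observations organize everything. First, a cancellation removes an even number of vertices ($2|V(H_1)|$) and an even number of edges ($2|E(H_1)|+2$), so it preserves both parities; hence $\f$ is invariant under reduction, and $\f$ is itself ``nim-additive'' over disjoint unions (it is just the XOR of the vertex- and edge-parity bits). Second, the moves from $G$ split into two types. A move deleting a cycle edge, a cycle vertex other than $A$, or the attachment vertex $A$ \emph{destroys the odd cycle}, leaving a forest whose nim-value is given by Proposition \ref{par}. A move deleting a tree edge or a tree vertex other than $A$ \emph{preserves the odd cycle}: after reduction the component meeting the cycle is a smaller reduced graph $G^*$ of the same type (an odd cycle with a tree attached at $A$), disjoint from a forest $F$ of cut-off subtrees, so its value is $\n(G^*)\oplus \f(F)$ by nim-addition, with $\n(G^*)$ controlled by the induction hypothesis.

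The heart of the argument is the identity that an option of $G$ has nim-value exactly $\f(G)$ if and only if it is obtained by deleting a telescoping vertex of odd degree. Writing $\delta\in\{1,2,3\}$ for the parity shift caused by a move (so $\f$ of the resulting position is $\f(G)\oplus\delta$, with $\delta=2$ for an edge deletion and $\delta=1$ or $3$ for a vertex deletion according as its degree is even or odd), one checks the four subcases. Every cycle-destroying move yields a forest of value $\f(G)\oplus\delta\neq\f(G)$. A cycle-preserving move for which $G^*$ falls in the ``otherwise'' case has value $\f(G^*)\oplus\f(F)=\f(G)\oplus\delta\neq\f(G)$, while one for which $\n(G^*)\gqs 4$ has value $\gqs 4$. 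The remaining possibility is $\n(G^*)=0$, which by the induction hypothesis means $G^*$ is the bare cycle $C_n$ --- precisely the condition that the deleted vertex is telescoping. Then the option value is $0\oplus\f(F)=\f(F)$, and since $\f(C_n)=3$ for an odd cycle the bookkeeping gives $\f(F)=\f(G)\oplus\delta\oplus 3$, which equals $\f(G)$ exactly when $\delta=3$, i.e. when the telescoping vertex has odd degree.

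With this dichotomy the three cases follow. If $G=C_n$ we simply quote $\n(C_n)=0$. In the ``otherwise'' case there is no odd telescoping vertex, so $\f(G)$ is not among the option values; it remains to realize every smaller value, which the reliable moves supply --- deleting a cycle edge gives $\f(G)\oplus 2$, deleting a cycle vertex gives $\f(G)\oplus 1$, and (when $|E(G)|$ is odd, so that $\f(G)\oplus 3<\f(G)$ is needed) a suitable odd-degree deletion gives $\f(G)\oplus 3$ --- whence $\mex=\f(G)$. When an odd telescoping vertex is present, its deletion realizes $\f(G)$ itself; together with $\f(G)\oplus 1$, $\f(G)\oplus 2$ and $\f(G)\oplus 3$ this shows all of $0,1,2,3$ occur, so $\n(G)\gqs 4$.

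I expect the genuine difficulties to be twofold. The first is the reachability of $\f(G)\oplus 3$ when it is needed: I would seek an odd-degree vertex (for instance a leaf of $T$, or $A$ when its tree-degree is odd) whose deletion lands in the ``otherwise'' case, but ruling out that every such deletion instead produces the bare cycle or an odd-telescoping position requires a careful look at the tree. The second, and I think the main obstacle, is the structural theory of telescoping vertices underlying the induction: one must show that a cycle-preserving move followed by reduction really returns a bona fide smaller instance of the same type, that new cancellations created by the move stay under control, and --- as the discussion of the example suggests --- that the telescoping vertices of $G$ are sufficiently constrained (presumably the role of the earlier sections) for the case analysis to close. These points, rather than the nimber bookkeeping, are where the real work lies.
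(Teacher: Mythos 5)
Your proposal follows essentially the same route as the paper: induction on the size of the attached tree, parity bookkeeping with $\f$ (invariant under cancellation and nim-additive over forests), and the observation that an option attains the value $\f(G)$ exactly when an odd-degree telescoping vertex is deleted, so that $\n(G)=\f(G)$ in the ``otherwise'' case and $\n(G)\gqs 4$ when such a vertex exists. The two difficulties you flag at the end are precisely what the paper supplies in Proposition \ref{ned} and the telescoping-vertex machinery of Sections \ref{st} and \ref{unique}, so your outline is sound and matches the paper's argument.
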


Theorem \ref{AB2} allows us to characterize   when a graph $G$ with exactly one odd cycle  has $\n(G)=\f(G)$.  It also leads to the following result which was conjectured by Khandhawit and  Ye  in  \cite[Conjecture 2]{Y}.

\begin{theorem}{\rm \bf} \label{conj}
Let $G$ be a reduced and possibly disconnected graph containing  one odd cycle and no other cycles or loops. Suppose that two or more vertices of the cycle have degree greater than $2$. Then $\n(G)=\f(G)$.
\end{theorem}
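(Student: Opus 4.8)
The plan is to reduce to the connected case and then run a strong induction on the size of $G$, verifying the minimal-excludant ($\mex$) conditions for $\n(G)=\f(G)$ and using Theorem \ref{AB2} as the main engine. First I would dispose of disconnectedness: every component of $G$ other than the one containing the odd cycle is a tree, hence bipartite, so by Proposition \ref{par} its nim-value equals its $\f$-value and lies in $\{0,1,2,3\}$. Since nim-values in $\{0,1,2,3\}$ add by bitwise XOR, and each bit of $\f$ is additive mod $2$ over components, it suffices to prove $\n(G_0)=\f(G_0)$ for the connected cycle-component $G_0$; nim-addition then gives the full statement. So I take $G$ connected: an odd cycle $C$ of length $m\gqs 3$ with trees attached at the $d\gqs 2$ cycle vertices of degree $>2$. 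A useful simplification is that such a $G$ is connected and unicyclic, so $|E(G)|=|V(G)|$ and therefore $\f(G)\in\{0,3\}$, equal to $0$ when $|V(G)|$ is even and to $3$ when $|V(G)|$ is odd.

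I would then induct on $|V(G)|+|E(G)|$ over the class of all such graphs (permitting the disconnected versions that arise as options), checking (a) that no option $G'$ has $\n(G')=\f(G)$, and (b) that every $u<\f(G)$ occurs as $\n(G')$ for some option. Two general remarks drive the classification: any single deletion changes the parity of $|V|$ or of $|E|$, hence $\f(G')\ne\f(G)$; and reduction leaves $\f$ unchanged, since cancelling the pair $H_1,H_2$ of Figure \ref{cancel} removes an even number of vertices and an even number of edges. Reducing and classifying each option, it is either a forest (a cycle edge or cycle vertex was deleted), where $\n(G')=\f(G')$ by Proposition \ref{par}; or it still carries the odd cycle with $\gqs 2$ attachment points, where $\n(G')=\f(G')$ by the induction hypothesis; or its cycle-component has exactly one attachment point, so it is a cycle with a nonempty tree and, combined by nim-addition with any tree components, Theorem \ref{AB2} gives $\n(G')=\f(G')$ or $\n(G')\gqs 4$. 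In the first two cases $\n(G')=\f(G')\ne\f(G)$, and in the third either $\n(G')=\f(G')\ne\f(G)$ or $\n(G')\gqs 4>3\gqs\f(G)$. This already settles (a) when $\f(G)=3$. When $\f(G)=0$ the sole remaining danger is an option whose cycle-component is a bare cycle (nim-value $0$); here I would prove a lemma that this cannot occur. With $d\gqs 2$ attachment points, a single deletion meets at most one of the vertex-disjoint attached trees, so some attached tree $T_B$ survives intact; since $B$ is its only cut vertex to the rest, and the two branches at $B$ (a tree versus a subgraph containing the odd cycle) are never isomorphic, no cancellation at $B$ can delete $T_B$, and reduction elsewhere cannot reach it. Hence $G'$ keeps an attachment and is never a bare cycle, completing (a).

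For (b) only $\f(G)=3$ needs work, where I must realize the values $0,1,2$. Deleting one cycle edge yields a forest with $\f=1$, hence a position of nim-value $1$. To reach $2$ I delete an even-degree vertex and to reach $0$ an odd-degree vertex, in each case arranging to land in a graph of known nim-value. If some cycle vertex has even degree (an absent attachment, or one carrying an even number of tree edges), its deletion breaks the cycle into a forest and realizes $2$; otherwise every cycle vertex has odd degree, and since $|V(G)|$ is odd an even-degree vertex must lie strictly inside a tree, whose deletion leaves both attachments in place and lands in the induction class with $\f=2$. Symmetrically, an odd-degree cycle vertex deletes to a forest realizing $0$; and if every cycle vertex has even degree, then each attachment carries at least two tree edges, so deleting a leaf preserves both attachments and lands in the induction class realizing $0$. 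In each branch the degree-sum parity, together with the freedom afforded by the two attachment points, is exactly what supplies a deletion of the required parity that avoids the exceptional single-attachment configurations of Theorem \ref{AB2}.

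I expect the main obstacle to be part (b): one must guarantee, for each target value, a legal deletion of the correct parity whose resulting position has a controlled nim-value, i.e. that one can always sidestep both the single-attachment graphs with an odd telescoping vertex (nim-value $\gqs 4$) and the bare-cycle positions (nim-value $0$). The hypothesis that at least two cycle vertices have degree $>2$ enters precisely here and in the bare-cycle lemma, and making the case distinction on the parities of the attachment degrees fully watertight is the delicate part of the argument.
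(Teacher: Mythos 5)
Your overall architecture matches the paper's: reduce to the connected case via Proposition \ref{par} and \eqref{fff}, induct, classify each option by how many cycle vertices of degree $>2$ survive reduction, and handle the three outcomes with Proposition \ref{par}, the induction hypothesis, and Theorem \ref{AB2} respectively. Part (a) of your argument, including the lemma that no option reduces to a bare cycle, is sound. The genuine gap is exactly where you predicted it would be, in part (b), and you have not closed it. Specifically, to realize the value $0$ when $\f(G)=3$ and every cycle vertex has even degree, you assert that ``deleting a leaf preserves both attachments.'' This is false: a leaf can be a telescoping vertex, in which case deleting it triggers cancellation that wipes out the entire tree at its attachment point. Concretely, let the tree at $A$ consist of the two branches $A\text{--}u$ and $A\text{--}x\text{--}y$, so $\deg A=4$ and the configuration is reduced; deleting the leaf $y$ leaves two isomorphic single-edge branches at $A$, which cancel, dropping $\deg A$ to $2$. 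If $G$ has exactly two attachment points, the resulting cycle component has only one, and Theorem \ref{AB2} then permits nim-value $\gqs 4$ rather than $0$, so your chosen move need not realize $0$. This is precisely the difficulty the paper's Proposition \ref{ned} exists to resolve: it produces an odd-degree vertex in the tree at $A$ whose removal and reduction keeps $\deg A\gqs 3$, so that the option stays in the induction class. (An alternative repair, closer in spirit to your sketch, is to note that $\deg A\gqs 4$ forces at least two leaves in the tree at $A$ while Corollary \ref{onetel} allows at most one telescoping vertex, so a non-telescoping leaf always exists --- but you would have to invoke that uniqueness result explicitly, and you do not.)

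A secondary, less damaging lapse occurs in your value-$2$ move: the justification ``deletion leaves both attachments in place'' for an even-degree vertex inside a tree is likewise unwarranted (such a vertex adjacent to a degree-$3$ cycle vertex can be telescoping). The conclusion survives there only because the hypothesis that every cycle vertex has odd degree forces every cycle vertex to carry an attachment (an unattached cycle vertex has even degree $2$), so there are at least three attachment points and losing one still leaves the option in the induction class --- an observation your write-up does not make. In short: same skeleton as the paper, but the one step the paper spends Proposition \ref{ned} (and the telescoping-vertex machinery of Sections \ref{st}--\ref{unique}) to justify is replaced in your proposal by an incorrect assertion.
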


Finding the winner on a graph with one odd cycle is a simple consequence of Theorems \ref{AB2} and \ref{conj} and described in the next corollary. The authors of \cite{Ri}, \cite{Y} and \cite{KN} highlighted this problem.

\begin{cor}
Let $G$ be a reduced and possibly disconnected graph containing  one odd cycle $C$ and no other cycles or loops. Then $\n(G)=0$  if and only if one of these three conditions holds:
\begin{enumerate}
  \item all vertices of $C$ have degree $2$ and $\f(G)=3$;
  \item exactly one vertex of $C$ has degree $>2$, $\f(G)=0$ and $G$ has no telescoping vertices of odd degree;
  \item two or more vertices of $C$ have degree $>2$ and $\f(G)=0$.
\end{enumerate}
\end{cor}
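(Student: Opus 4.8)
The plan is to reduce everything to the two connected cases already settled by Theorems \ref{AB2} and \ref{conj}, by first peeling off the acyclic components. Since $G$ contains exactly one odd cycle $C$ and no other cycles or loops, precisely one connected component $G_0$ contains $C$; it is an odd cycle with trees hanging from its vertices, while every remaining component is a tree. Gathering those trees into a single forest $F$, I write $G = G_0 \cup F$, and since $F$ is bipartite, Proposition \ref{par} gives $\n(F) = \f(F) \in \{0,1,2,3\}$. Two structural facts then drive the argument. First, nim-values of disjoint unions combine by nim-addition, so $\n(G) = \n(G_0)\oplus\n(F)$. Second, because $\f$ records the parities of the vertex count and the edge count in two independent binary digits, it is likewise additive under disjoint union, $\f(G) = \f(G_0)\oplus\f(F)$. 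Finally, since cancellation is a local operation, $G_0$ inherits reducedness from $G$. I would then split according to how many vertices of $C$ have degree greater than $2$.

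When two or more vertices of $C$ have degree $>2$, Theorem \ref{conj} applies to $G$ directly and yields $\n(G) = \f(G)$, so $\n(G) = 0 \iff \f(G) = 0$, which is condition (iii). When all vertices of $C$ have degree $2$, no trees are attached and $G_0$ is the bare cycle $C$, so the first line of Theorem \ref{AB2} gives $\n(G_0) = 0$; an odd cycle has an odd number of both vertices and edges, so $\f(G_0) = 3$. Hence $\n(G) = \f(F)$ while $\f(G) = 3\oplus\f(F)$, and therefore $\n(G) = 0 \iff \f(F) = 0 \iff \f(G) = 3$, which is condition (i).

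The remaining case, exactly one cycle vertex of degree $>2$, is where Theorem \ref{AB2} does the real work and where I expect the only genuine subtlety. Here $G_0$ is a reduced odd cycle attached to a nontrivial tree at one vertex, so the telescoping vertices of $G$ are exactly those of that tree, each having the same degree in $G$ as in $G_0$. If some telescoping vertex has odd degree, Theorem \ref{AB2} gives $\n(G_0) \geq 4$; since $\n(F) \in \{0,1,2,3\}$ affects only the two lowest binary digits, nim-addition by $\n(F)$ leaves every digit of weight $4$ or more unchanged, so $\n(G) = \n(G_0)\oplus\n(F) \geq 4$ and in particular $\n(G) \neq 0$. If instead no telescoping vertex has odd degree, Theorem \ref{AB2} gives $\n(G_0) = \f(G_0)$, whence $\n(G) = \f(G_0)\oplus\f(F) = \f(G)$. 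Combining the two subcases, $\n(G) = 0$ holds exactly when there is no telescoping vertex of odd degree and $\f(G) = 0$, which is condition (ii). The main point to get right is precisely this bookkeeping for the disjoint acyclic part: checking that attaching $F$ alters neither the set of telescoping vertices nor their parities, and that a forest nim-value lying in $\{0,1,2,3\}$ can never cancel a value of $4$ or more under nim-addition.
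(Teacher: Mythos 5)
Your proof is correct and follows exactly the route the paper intends: the corollary is stated there as a "simple consequence" of Theorems \ref{AB2} and \ref{conj}, derived by splitting off the cycle component, applying those theorems together with Proposition \ref{par}, and combining via nim-addition and the additivity of $\f$. Your bookkeeping of the disjoint forest part (including the observation that a value in $\{0,1,2,3\}$ cannot cancel a bit of weight $4$ or more) is exactly the point that needs checking, and you handle it correctly.
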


Most of the results in this paper build on those of Khandhawit and  Ye  in  \cite{Y}, such as for graphs with one odd cycle as already mentioned.
In Sections \ref{unbd} and \ref{many} we find the nim-values of further families of graphs, some containing many odd cycles. The following  two propositions show the importance of parity considerations and generalize results in \cite[Appendix B]{Y}.

\SpecialCoor
\psset{griddots=5,subgriddiv=0,gridlabels=0pt}
\psset{xunit=0.5cm, yunit=0.5cm, runit=0.5cm}
\psset{linewidth=1pt}
\psset{dotsize=5pt 0,dotstyle=*}

\begin{figure}[ht]
\centering
\begin{pspicture}(0,0.5)(32,9) 

\psset{arrowscale=2,arrowinset=0.5}

\savedata{\mydata}[
{{8., 5.}, {7.65637, 6.39417}, {6.70419, 7.46895}, {5.36161, 7.97813},
{3.93619, 7.80505}, {2.75447, 6.98937}, {2.08717, 5.71795}, {2.08717,
4.28205}, {2.75447, 3.01063}, {3.93619, 2.19495}, {5.36161, 2.02187},
{6.70419, 2.53105}, {7.65637, 3.60583},{8., 5.}}
]
\dataplot[plotstyle=line]{\mydata}
\dataplot[plotstyle=dots]{\mydata}

\savedata{\mydatab}[
{{8., 5.}, {7.60313, 6.3516}, {6.53854, 7.27408}, {5.14421,
  7.47455}, {3.86285, 6.88937}, {3.10127, 5.70433}, {3.10127,
  4.29567}, {3.86285, 3.11063}, {5.14421, 2.52545}, {6.53854,
  2.72592}, {7.60313, 3.6484}, {8., 5.}}
]
\dataplot[plotstyle=line]{\mydatab}
\dataplot[plotstyle=dots]{\mydatab}

\savedata{\mydatac}[
{{8., 5.}, {7.53209, 6.28558}, {6.3473, 6.96962}, {5.,
  6.73205}, {4.12061, 5.68404}, {4.12061, 4.31596}, {5.,
  3.26795}, {6.3473, 3.03038}, {7.53209, 3.71442}, {8., 5.}}
]
\dataplot[plotstyle=line]{\mydatac}
\dataplot[plotstyle=dots]{\mydatac}

\savedata{\mydataww}[
{{27., 2.}, {27,5}, {29.3455, 3.12953}, {29.9248, 5.66756}, {27,5},  {28.3017,
  7.70291}, {25.6983, 7.70291},  {27,5}, {24.0752, 5.66756}, {24.6545,
  3.12953},  {27,5}, {27., 2.}}
]
\dataplot[plotstyle=line]{\mydataww}

\savedata{\mydataw}[
{{27., 2.}, {29.3455, 3.12953}, {29.9248, 5.66756}, {28.3017,
  7.70291}, {25.6983, 7.70291}, {24.0752, 5.66756}, {24.6545,
  3.12953}, {27., 2.}}
]
\dataplot[plotstyle=line]{\mydataw}
\dataplot[plotstyle=dots]{\mydataw}
\psdot(27,5)


\savedata{\mydataz}[
{{20., 5.00005}, {19.0163, 6.32177}, {17.6191, 7.19499}, {16.,
  7.5}, {14.3809, 7.19499}, {12.9837, 6.32177}, {12., 5.00005}}
]
\dataplot[plotstyle=line]{\mydataz}
\dataplot[plotstyle=dots]{\mydataz}

\savedata{\mydataza}[
{{19.9997, 5.00017}, {18.0614, 5.74626}, {16., 6.}, {13.9386,
  5.74626}, {12.0003, 5.00017}}
]
\dataplot[plotstyle=line]{\mydataza}
\dataplot[plotstyle=dots]{\mydataza}

\savedata{\mydatazb}[
{{19.9997, 4.99983}, {18.0614, 4.25374}, {16., 4.}, {13.9386,
  4.25374}, {12.0003, 4.99983}}
]
\dataplot[plotstyle=line]{\mydatazb}
\dataplot[plotstyle=dots]{\mydatazb}

\savedata{\mydatazc}[
{{20., 4.99995}, {19.0163, 3.67823}, {17.6191, 2.80501}, {16.,
  2.5}, {14.3809, 2.80501}, {12.9837, 3.67823}, {12., 4.99995}}
]
\dataplot[plotstyle=line]{\mydatazc}
\dataplot[plotstyle=dots]{\mydatazc}

\rput(16,3.2){$z_4$}
\rput(16,4.7){$z_3$}
\rput(16,6.7){$z_2$}
\rput(16,8.2){$z_1$}

\rput(5,0.9){$r=3$ odd cycles}
\rput(16,0.9){$k=4$ paths between $P$ and $Q$}
\rput(27,0.9){$W_n$ for $n=7$}
\rput(11.2,5){$P$}
\rput(20.8,5){$Q$}

\end{pspicture}
\caption{Some graph families}
\label{families}
\end{figure}
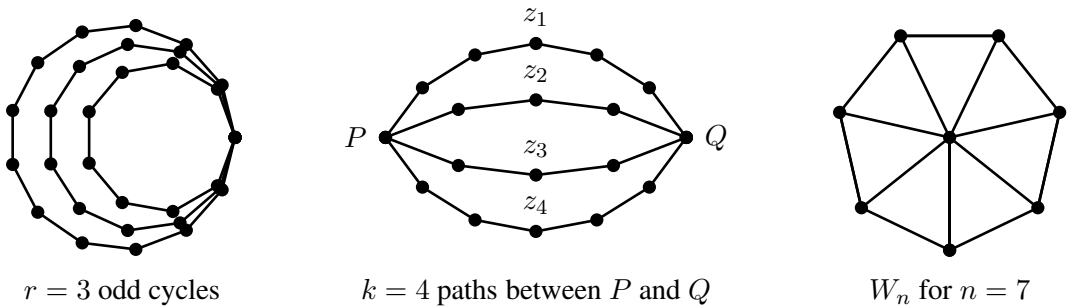

\begin{prop} \label{oddcyc}
If $r$ odd cycles are attached at one vertex, as shown for example on the left of Figure \ref{families}, then the nim-value of this graph is $0$ if $r$ is odd and $1$ if $r$ is even.
\end{prop}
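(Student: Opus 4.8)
The plan is to compute the parity data first and then run a strong induction on the size of the graph, with the cancellation principle supplying the recursive step. Write $G$ for the bouquet of odd cycles $C_1,\dots,C_r$ meeting at the single vertex $v$, where $C_i$ has length $2k_i+1$. Since $C_i$ contributes $2k_i$ non-central vertices and $2k_i+1$ edges, we get $|V(G)|=1+2\sum_i k_i$ (always odd) and $|E(G)|\equiv r\pmod 2$, so $\f(G)=1$ for $r$ even and $\f(G)=3$ for $r$ odd. In particular the asserted nim-value never equals $\f(G)$, which is expected since $G$ is not bipartite; the work of the proof is to see how the odd cycles shift the value away from $\f$.

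The engine of the induction is a single canonical move. In an odd cycle $v=w_0,w_1,\dots,w_{2k_i},w_0=v$ there is a unique edge $w_{k_i}w_{k_i+1}$ ``opposite'' $v$, and deleting it splits $C_i$ into two pendant paths attached to $v$, each of length $k_i$, hence isomorphic. This is exactly the configuration of the cancellation principle (with $s=v$), so these two paths may be deleted without changing the nim-value, and what remains is the bouquet $G_{r-1}$ of the other $r-1$ cycles at $v$. Thus $G$ always has an option of nim-value $\n(G_{r-1})$, which by induction is $0$ when $r$ is even and $1$ when $r$ is odd. A second explicit option is deletion of $v$ itself: each $C_i$ collapses to a path with $2k_i-1$ (odd) edges, a tree of nim-value $2$, and the disjoint union of $r$ of these has nim-value $2\oplus\cdots\oplus 2$, equal to $0$ for $r$ even and $2$ for $r$ odd. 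Together these already exhibit a value-$0$ option when $r$ is even (so $\n(G)\gqs 1$) and show that neither of these two moves ever produces an odd option value.

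To finish via the minimal excludant I would use the key simplification that, in order to decide whether $\n(G)$ is $0$ or $1$, I only need to know which options take the values $0$ and $1$; every option of value $\gqs 2$ is irrelevant. So it suffices to prove: when $r$ is odd, no option has value $0$; and when $r$ is even, no option has value $1$. Every option other than deleting $v$ turns one cycle into a pendant path or a pair of pendant paths, hence has the shape ``$(r-1)$ odd cycles together with a tree, attached at $v$''. A short parity count (using that every non-central vertex has degree $2$ and that cancellation preserves both parities) shows such a descendant has $\f=1$ or $3$ after an edge deletion and $\f=0$ or $2$ after a vertex deletion; in every case this $\f$ avoids the forbidden value. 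When $r\lqs 2$ these descendants carry at most one cycle, so Theorem \ref{AB2} evaluates them outright — each is either ``just a cycle'' (value $0$), a telescoping case (value $\gqs 4$), or generic (value $\f$) — and the mex computation closes immediately.

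The hard part, and the main obstacle, is the case $r\gqs 3$, where the descendants carry two or more odd cycles and fall outside every result stated so far. The substance of the argument must therefore be a companion induction over the enlarged family of graphs consisting of several odd cycles and a forest meeting at a single vertex — the smallest family closed under all the moves above — establishing the analogue of Theorem \ref{AB2}: within this family the nim-value equals $\f$ except when the tree cancels away entirely (leaving a pure bouquet $G_{r-1}$, whose value we already control) or an odd-degree telescoping vertex forces the value to be $\gqs 4$. Granting this generalization, the forbidden values are ruled out: the pure-bouquet exceptions give $\n(G_{r-1})$, which is $1\neq 0$ for $r$ odd and $0\neq 1$ for $r$ even, the telescoping exceptions give $\gqs 4$, and the generic exceptions give the $\f$-values computed above. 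Hence for $r$ odd the option values lie in $\{1,2\}\cup\{4,5,\dots\}$, so $\mex=0$, and for $r$ even they lie in $\{0,3\}\cup\{4,5,\dots\}$ with $0$ present, so $\mex=1$, completing the induction from the base case $r=1$ of a single cycle of value $0$.
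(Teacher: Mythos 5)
You have correctly diagnosed the structure of the argument: the bouquet of odd cycles is not closed under the game's moves, so the induction must run over the enlarged family of $r$ odd cycles together with pendant paths meeting at the single vertex, and the heart of the matter is a classification of nim-values over that whole family. This is exactly how the paper proceeds --- Proposition \ref{oddcyc} is obtained as the special case $m=0$ of Theorem \ref{trro}, whose statement is precisely the companion result you call for, with the exceptional cases parametrized by the alternating sum $\hat{X}$ of the path lengths: $\n=0$ when $r$ is odd and $\hat{X}=0$, $\n=4$ when $r$ is odd and $\hat{X}=1$, and $\n=\f$ otherwise. Your peripheral computations --- the parity of $\f$, the opposite-edge move that cancels down to $G_{r-1}$, the hub-deletion value $2\oplus\cdots\oplus 2$, the check that $\f$ of every descendant avoids the forbidden value, and the reduction of $r\lqs 2$ to Theorem \ref{AB2} --- are all correct and consistent with the paper's case analysis.

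The gap is that the companion theorem is asserted rather than proved (``Granting this generalization\dots''), and it is where essentially all of the work lies: the paper's proof of Theorem \ref{trro} is a six-case induction on $(r,X)$ ordered lexicographically, and nothing in your write-up substitutes for it. Moreover the statement you propose for the companion result is not quite right: you claim an odd-degree telescoping vertex always forces the value to be $\gqs 4$, but when the number of cycles in the descendant is even and $\hat{X}=1$ the value is $\f$ (cases (iii)--(iv) of Theorem \ref{trro}), not $\gqs 4$. This inaccuracy happens not to disturb your final $\mex$ computation, since either way the value avoids $0$ and $1$, but it would derail an attempt to prove the companion statement by induction in the form you give. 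A small wording slip: for $r$ odd the opposite-edge move produces an option of value $1$, so ``neither of these two moves ever produces an odd option value'' should read ``\dots the forbidden value.''
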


\begin{prop} \label{paths}
Let $G$ be a graph consisting of $k$ paths of positive lengths $z_1, \dots, z_k$ linking vertices $P$ and $Q$, as in the example in the middle of Figure \ref{families}. Set $Z:=\sum_{i=1}^k z_k$. Then
\begin{equation*}
  \n(G)=\left\{
          \begin{array}{ll}
            0 & \hbox{if $k$ is even;} \\
            1 & \hbox{if $k$ is odd and $Z$ is even;} \\
            2 & \hbox{if $k$ is odd and $Z$ is odd.}
          \end{array}
        \right.
\end{equation*}
\end{prop}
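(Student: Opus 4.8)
The first step is to record the relevant parities. Writing $Z=\sum_i z_i$, the graph $G$ on the middle of Figure~\ref{families} has $|E(G)|=Z$ edges and $|V(G)|=Z-k+2$ vertices, so $\f(G)=(Z-k)_{(2)}+2Z_{(2)}$. A direct check shows that the claimed nim-value agrees with $\f(G)$ in every case \emph{except} when $k$ is even and $Z$ is odd, where $\f(G)=3$ but the asserted value is $0$; this is the only genuinely non-bipartite phenomenon to explain. The key structural facts are that $P$ and $Q$ have degree $k$ while every other vertex has degree $2$, so $G$ has no odd-degree vertex when $k$ is even and exactly two ($P,Q$) when $k$ is odd; and that $G$ carries the involution $\sigma$ swapping $P\leftrightarrow Q$ and reversing each path. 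Under $\sigma$ the legal moves fall into $\sigma$-pairs together with exactly $k$ self-symmetric (\emph{central}) moves --- one per path, namely deleting the middle edge of an odd path or the middle vertex of an even path. The parity of this count $k$ is what drives the whole result.

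For $k$ even I would give the second player an explicit symmetry strategy, proving $\n(G)=0$. The second player answers every asymmetric move $M$ by its mirror $\sigma(M)$ (still legal, since $M$ and $\sigma(M)$ act on disjoint parts), and answers a central move on one path by a central move on another; because $k$ is even the central moves can always be removed in pairs, so the player forced to disturb the symmetry is always the first player. When the position finally degenerates into two isomorphic $\sigma$-halves joined nowhere, the second player switches to copying between the halves. The delicate point is that a central vertex of an even path is fixed by $\sigma$, so I must verify the second player is never the one left to remove a lone $\sigma$-fixed vertex, and this is exactly where the even parity of $k$ is used. Equivalently, in mex terms it suffices to show that \emph{no} option of $G$ has nim-value $0$: deleting $P$ or $Q$ yields a spider, a tree, with value in $\{1,2\}$ by Proposition~\ref{par}, and every other option is a strictly smaller graph of the enlarged family described below, for which the induction returns a nonzero value.

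For $k$ odd I would argue by strong induction on $|V(G)|+|E(G)|$, computing $\n(G)$ as the mex of its options and passing to $\sigma$-orbit representatives. The achievability of every value below the target is easy: deleting $P$ or $Q$ (the only odd-degree, hence ``flip-both'', moves) leaves a spider whose value Proposition~\ref{par} pins down, while suitable edge- and internal-vertex-deletions shift $\f$ by $2$ and by $1$ respectively; together these realize the value $0$ when the target is $1$, and both $0$ and $1$ when the target is $2$. The substance is to show the target value itself is never produced. Here I would combine the fact that $\f$ strictly changes under every move with a careful, $\sigma$-assisted description of the few option-types that could a priori hit the target; options that reduce to a single odd cycle are evaluated by Theorems~\ref{AB2} and~\ref{conj} (telescoping cases even contributing values $\gqs 4$), after which a residue analysis shows the target is omitted.

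The main obstacle, in both parities, is this last \emph{avoidance} direction, and it is genuinely delicate because the options of $G$ are no longer theta graphs but thetas carrying two pendant paths at $P$ and $Q$ (and, after a mirrored pair of edge-deletions, a free central path). Such graphs are typically non-bipartite with several odd cycles, so Proposition~\ref{par} does not apply and their nim-values are not read off $\f$; small examples already exhibit values as large as $4$, matching the telescoping behaviour of Theorem~\ref{AB2}. The plan is therefore to run the induction over this slightly enlarged family, using the symmetry $\sigma$ to halve the case analysis and the already-established Theorems~\ref{AB2} and~\ref{conj} for those options reducing to the one-odd-cycle case, and to verify in each parity class only the single statement that the target nim-value is skipped --- rather than computing the full and rather irregular spectrum of option values.
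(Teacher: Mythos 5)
Your reduction of the problem to ``the only non-parity case is $k$ even, $Z$ odd'' is correct, but the explicit strategy you give for $k$ even is wrong, and this is a genuine gap rather than a presentational one. Take $k=2$ with $(z_1,z_2)=(1,4)$, so that $G$ is the $5$-cycle on $P,u_1,u_2,u_3,Q$ and $\sigma$ swaps $P\leftrightarrow Q$, $u_1\leftrightarrow u_3$ and fixes $u_2$. The move ``delete $u_1$'' is asymmetric and disjoint from its mirror, yet the mirror reply ``delete $u_3$'' leaves the edge $PQ$ plus the isolated vertex $u_2$, a position of nim-value $2\oplus 1=3$ from which the first player wins by deleting $P$; the correct reply to ``delete $u_1$'' is to delete an edge of the remaining path, which is not a mirror of anything. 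The obstruction is that $\sigma$ swaps adjacent vertices whenever some $z_i$ is odd (the endpoints of the middle edge of that path, and $P,Q$ themselves when $z_i=1$), so hypothesis (ii) of Lemma \ref{twin} fails, and a $\sigma$-symmetric position need not be a second-player win --- a single edge whose endpoints are swapped already has nim-value $2$. Your worry about the $k$ self-symmetric ``central'' moves is therefore aimed at the wrong place: for general lengths no pairing strategy exists, and the $k$ even case must also go through the mex computation you offer only as a fallback.

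That fallback is essentially the paper's route, but two things are underspecified. First, the enlarged family cannot be ``thetas with two pendant paths'': every broken linking path adds one pendant at each of $P$ and $Q$, and deeper in the game tree these accumulate and shed free path components, so the induction must run over the full family of Theorem \ref{thmxyz} ($k$ linking paths plus arbitrarily many pendants at each hub) and must carry that theorem's full conclusion, including the exceptional values $0$ and $4$ controlled by $\hat X+\hat Y$. The paper proves Theorem \ref{thmxyz} by lexicographic induction on $(k,X+Y)$ in six parity cases and obtains Proposition \ref{paths} as the special case $m=n=0$; it never needs Theorems \ref{AB2} or \ref{conj} (which in any case only reach options that have collapsed to two linking paths), and it uses no symmetry beyond the multi-edge reduction. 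Second, your ``residue analysis shows the target is omitted'' is the entire content of that induction and cannot be waved at: one must, for instance, check that the unique moves preserving $\hat X+\hat Y=1$ land on value $5$, and exhibit a value-$0$ move in the $k$ odd, $Z$ odd case even when every pendant has length $\lqs 1$. As written, the proposal breaks in one half and postpones the hard part of the other.
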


For $n\gqs 3$ the  {\em wheel graph} $W_n$
is constructed by joining a central hub vertex to each vertex of the cycle graph $C_n$. These joining edges are called spokes. See Figure \ref{families} for $W_{7}$. In \cite{Y} they make
an elegant conjecture about the nim-values of wheel graphs.

\begin{conj}{\rm \bf \cite[Conjecture 3]{Y}} \label{conjw}
We have $\n(W_n)=1$ for all $n \gqs 3$.
\end{conj}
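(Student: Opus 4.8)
The plan is to apply the Sprague--Grundy recursion $\n(W_n)=\mex\{\n(G'):W_n\to G'\}$ directly, so I first classify the moves up to isomorphism. Since $W_n$ is symmetric under the rotations and reflections of its rim (all of which fix the hub and act transitively on the rim vertices, the rim edges, and the spokes), every move falls into one of four classes: deleting the hub yields the cycle $C_n$; deleting a rim vertex, together with its two rim edges and its spoke, yields the \emph{fan} $F_{n-1}$, i.e.\ the hub joined to a path on $n-1$ vertices; deleting a single rim edge yields the fan $F_n$; and deleting a single spoke yields a \emph{punctured wheel} $W_n^{\circ}$, the hub joined to the full cycle $C_n$ by only $n-1$ spokes. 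Thus $\{\n(G'):W_n\to G'\}=\{\n(C_n),\n(F_{n-1}),\n(F_n),\n(W_n^{\circ})\}$, and the entire problem reduces to controlling these four numbers.

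Second, I would dispose of the easy half. As recorded in the remarks following Proposition \ref{par} (and via Proposition \ref{oddcyc} with $r=1$ in the odd case), $\n(C_n)=0$ for every $n\gqs 3$. Hence $0$ lies in the option set and $\n(W_n)\gqs 1$. To upgrade this to $\n(W_n)=1$ it suffices to show that $1$ is \emph{not} an option, i.e.\ that
\begin{equation*}
  \n(F_{n-1})\neq 1,\qquad \n(F_n)\neq 1,\qquad \n(W_n^{\circ})\neq 1
  \qquad\text{for all }n\gqs 3.
\end{equation*}
Note that because $|E(W_n)|=2n$ is even and $|V(W_n)|=n+1$, we have $\f(W_n)=0$ whenever $n$ is odd; the conjecture therefore genuinely asserts $\n(W_n)\neq\f(W_n)$ in these cases, so no pure parity strategy of the Fraenkel--Scheinerman type can work and the recursion above is unavoidable.

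Third, the substance of the proof is to determine the nim-values of the two auxiliary families, the fans $F_m$ and the punctured wheels $W_n^{\circ}$, for all parameters. I would attempt this by a simultaneous induction on $n$ carried out over the enlarged class of \emph{partial wheels}: graphs in which the hub is joined, by an arbitrary subset of spokes, to a disjoint union of paths lying along a cycle or an arc. This class is natural because it is very nearly closed under the moves of the game: deleting a hub edge removes one spoke, deleting a rim vertex or rim edge breaks one of the arcs, and deleting the hub leaves a union of paths and cycles whose nim-values are already known from Proposition \ref{par} and the remarks after it, combined through nim-addition. Wherever a partial wheel presents two isomorphic pendant arcs hanging from the hub, I would invoke the cancellation of Figure \ref{cancel} to collapse them, shrinking the number of genuinely distinct positions. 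The target is, for each residue class of $n$ and of the relevant arc lengths modulo a small period, either an explicit formula or at least the weaker assertion that the value avoids $1$, checked against the mex of the reachable partial wheels.

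The main obstacle is precisely the closure and regularity of this enlarged class. Deleting individual spokes produces partial wheels with many absent spokes, and the mex recursion for them does not obviously stabilize: the reachable positions proliferate into fans with interior spokes removed and disjoint unions thereof, whose nim-values must all be pinned down before the single value $\n(W_n^{\circ})$ can be certified to differ from $1$. This is exactly the \emph{unexplained regularity} flagged above---computation strongly suggests the fan and punctured-wheel sequences are eventually periodic in $n$, but I see no a priori bound on the period or preperiod, and isolating a self-reproducing sub-family that is genuinely closed under all four move types, so that the induction can be closed, is the crux on which a complete proof of $\n(W_n)=1$ rests.
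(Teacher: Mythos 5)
You should first be aware that the statement you are proving is labelled a \emph{conjecture} in the paper and is not proved there: the paper establishes $\n(W_n)=1$ only for $n$ even (by the symmetry lemma) and for $3\lqs n\lqs 25$ (Theorem \ref{wheel}), the latter by a second-player pairing strategy on $W_n\cup K_1$ whose hard case (removal of a spoke) is reduced to the graphs $F^{**}_m$ with $m\lqs 14$ and then settled by computer. Your opening reduction is sound: by the dihedral symmetry of $W_n$ there are exactly four option classes, $C_n$, $F_{n-1}$, $F_n$ and $W_n$ less a spoke, and since $\n(C_n)=0$ you correctly get $\n(W_n)\gqs 1$, so everything hinges on showing none of the other three options has nim-value $1$.

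That last step is the entire content of the problem, and your proposal does not supply it --- as you yourself concede in the final paragraph. The induction over ``partial wheels'' is not closed: deleting spokes produces fans with arbitrary subsets of interior spokes missing, and no formula, periodicity, or even the weaker exclusion $\n\neq 1$ is established for these families. The paper's own (conjectural) data show the fan values are $2$ for $n$ odd and $3$ for $n$ even, and that the option values of $F_n$ and $F^*_n$ exhibit striking regularity, but proving any of this is exactly the open difficulty; the paper even records that these patterns are only verified computationally for $n\lqs 14$. So what you have is a correct reformulation of the conjecture (``$1$ is never an option value of $W_n$'') rather than a proof. If you want a genuinely different angle worth pursuing, note the paper's observation that Conjecture \ref{edcon} (every subgraph $H$ of a wheel with $\f(H)=2$ has an edge move to nim-value $0$) implies the present conjecture via the winning edge response in $Q_n$; attacking that statement, perhaps through a classification of wheel subgraphs with $\n=\f$ in the spirit of Theorem \ref{AB2}, is the route the author suggests.
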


 They show with a symmetry argument that this conjecture is true for all $n$ even and by a computation that it is true for $n=3,5,7$. By combining  symmetry arguments with a computer search we extend this range and prove in Theorem \ref{wheel} that  Conjecture \ref{conjw} is true for $3\lqs n\lqs 25$.
We make further conjectures about the nim-values of subgraphs of $W_n$ in Section \ref{wh}.

We close this introduction by noting that there are other ways to play impartial games (i.e. with rules the same for both players) on  undirected graphs. Games that appeared before graph take-away are {\em node kayles} and {\em arc kayles} which were introduced in the study of computational complexity in \cite{SCH}. The moves in node kayles consist of  removing any vertex along with all its neighboring vertices. The moves in arc kayles involve choosing an edge and removing its endpoint vertices (and all incident edges). Three recent games on graphs that are also similar to graph take-away, though they perhaps have less  structure, are the following. In the {\em odd/odd vertex deletion game} players may only remove vertices of odd degree; see  \cite{NO,odd14}.  With {\em graph nim}, as in for example \cite{LC16}, a player on their turn removes any positive number of edges incident to a single vertex.  {\em Grim} is introduced in \cite{grim16} and a player removes a vertex, all incident edges and any vertices that have become isolated.  For all these games, as usual, the first player unable to play loses. Arc kayles and grim are examples of the octal games on graphs studied in \cite{OCT}.

\vskip .15in
\textbf{Acknowledgements.}
The author thanks Andries E. Brouwer  for providing a copy of \cite{DR} and Tirasan Khandhawit for the reference \cite{Ri}.

\section{Basic methods} \label{basic}
We recall more of the theory of impartial games from, for example, \cite[Chap. 11]{onag}, \cite[Chap. 3]{BCG}. The nim-value of a graph game may be calculated inductively as follows. The empty graph has value $0$ and if a graph $G$ has the subgraph options (moves) $G_1, G_2, \dots, G_m$ for the first player then
\begin{equation}\label{mex}
\n(G)=\mex\bigl(\{\n(G_1),\n(G_2), \dots, \n(G_m)\}\bigr)
\end{equation}
where $\mex$ denotes the minimal non-negative integer excluded from the set.

 If $G$ is  disconnected and equal to a disjoint union of $n$  subgraphs $H_1, \dots, H_n$ then
\begin{equation}\label{oplus}
  \n(G)=\n(H_1)\oplus \cdots \oplus \n(H_n)
\end{equation}
where $\oplus$ is the xor operation (binary addition without carry) and called nim-addition in this context.
The general relation \eqref{oplus} follows from the $n=2$ case of \eqref{oplus} which is straightforward to prove with \eqref{mex}.  Let $N(r)$ be the set of nim-values of the possible moves of $H_r$  with $m_r:=\mex(N(r))$. Set $M:=m_1\oplus \cdots \oplus m_n$ and note that $M\oplus m_r$ is the same nim-sum with $m_r$ removed. The {\em addition of  games relation} we will need later is
\begin{equation}\label{addg}
  M=\mex\left(\bigl\{  M \oplus m_r \oplus a  \, \big|\, 1\lqs r\lqs n, \ a \in N(r)\bigr\}\right)
\end{equation}
and it is equivalent to \eqref{oplus}.

In the case that $G$ is a disjoint union of bipartite  subgraphs $H_1, \dots, H_n$, then $G$ is also bipartite and \eqref{oplus} becomes
\begin{equation}\label{fff}
  \f(G)=\f(H_1)\oplus \cdots \oplus \f(H_n),
\end{equation}
which is easy to verify directly.

\SpecialCoor
\psset{griddots=5,subgriddiv=0,gridlabels=0pt}
\psset{xunit=0.7cm, yunit=0.7cm, runit=0.7cm}
\psset{linewidth=1pt}
\psset{dotsize=5pt 0,dotstyle=*}

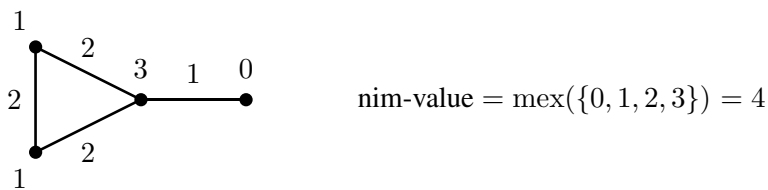
\begin{figure}[ht]
\centering
\begin{pspicture}(-4,0.4)(15,4) 

\psset{arrowscale=2,arrowinset=0.5}

\psdots(0,1)(0,3)(2,2)(4,2)

\psline(0,1)(0,3)(2,2)(4,2)(2,2)(0,1)

\rput(-0.3,3.5){$1$}
\rput(-0.3,0.5){$1$}
\rput(-0.4,2){$2$}
\rput(2,2.6){$3$}
\rput(4,2.6){$0$}
\rput(3,2.5){$1$}
\rput(1,3){$2$}
\rput(1,1){$2$}

\rput(10,2){nim-value $= \mex(\{0,1,2,3\})=4$}

\end{pspicture}
\caption{A graph with nim-value $4$}
\label{simex}
\end{figure}

For a non-bipartite example, we compute the nim-value of a triangle with an edge attached as shown in Figure \ref{simex}. The nim-values of the possible moves are indicated. Removing the degree $3$ vertex, for instance, leaves two trees and the resulting graph has nim-value $1\oplus 2=3$. Moves with values $0,$ $1$ and $2$ are also possible. Hence the nim-value of this graph is $4$.

The symmetry argument we mentioned in the introduction is contained in the next lemma. It may be used to replace a graph game with a smaller one that has the same nim-value.

\begin{lemma}{\rm (The symmetry lemma.)} \label{twin}
Let $G=(V(G),E(G))$ be a graph and $\tau:G\to G$ an automorphism with the following properties:
\begin{enumerate}
  \item $\tau^2$ is the identity,
  \item for all $v\in V(G)$, the vertices $v$ and $\tau(v)$ are not connected by an edge in $G$.
\end{enumerate}
Let $G^\tau$ be the subgraph of $G$ on which $\tau$ acts as the identity. Then $\n(G)=\n(G^\tau)$.
\end{lemma}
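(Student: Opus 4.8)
The plan is to prove the equivalent statement that the disjoint union $G\cup G^\tau$ is a second-player win, i.e. $\n(G\cup G^\tau)=0$; by \eqref{oplus} this nim-sum equals $\n(G)\oplus\n(G^\tau)$, which vanishes precisely when $\n(G)=\n(G^\tau)$. To exhibit the second player's winning strategy I would use a mirroring (Tweedledum--Tweedledee) argument built from an auxiliary involution $\sigma$ on the vertices and edges of $G\cup G^\tau$.

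First I would classify the objects of $G$ under $\tau$. Hypothesis (i) says $\tau$ is an involution, so each vertex is either fixed or lies in a two-element orbit $\{v,\tau(v)\}$, and likewise for edges. Hypothesis (ii) --- that $v$ and $\tau(v)$ are never adjacent --- forces every edge fixed as a set by $\tau$ to join two fixed vertices; hence the $\tau$-fixed objects are exactly the vertices and edges of $G^\tau$, and every remaining object lies in a genuine two-element $\tau$-orbit. I would then define $\sigma$ on $G\cup G^\tau$ by letting it swap the two copies of each object of $G^\tau$ (the fixed object in the $G$-component with its twin in the $G^\tau$-component) and act as $\tau$ on the non-fixed objects inside the $G$-component. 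By construction $\sigma$ is a fixed-point-free involution, and because $\tau$ is a graph automorphism the starting position $G\cup G^\tau$ is $\sigma$-invariant.

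The strategy is then: from any $\sigma$-invariant position, whenever the first player deletes an object $x$ (together with all edges incident to it, if $x$ is a vertex), the second player deletes $\sigma(x)$ and its incident edges. I would verify by induction that this keeps the position $\sigma$-invariant after every reply, so in particular the second player can always move; since the game is finite and under normal play the last player to move wins, the second player takes the last vertex. The two facts to check at the induction step are that the mirror move is legal and independent of the first player's move, and that the resulting edge-and-vertex deletions form a $\sigma$-invariant set.

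The only delicate case --- and the step I expect to be the main obstacle --- is the deletion of a vertex. Here I must show that the edges removed with $x$ and the edges removed with $\sigma(x)$ form disjoint, $\sigma$-paired sets, so that deleting $\sigma(x)$ after $x$ removes exactly $\sigma$ of what $x$ removed. When $x$ is a non-fixed vertex this is precisely where hypothesis (ii) is used: $v$ and $\tau(v)$ share no incident edge, so their stars are disjoint and interchanged by $\sigma$. When $x$ is a fixed vertex $w$ of the $G$-component, its star in $G$ contains edges of $G^\tau$ together with edges running to non-fixed vertices; I would observe that the latter are permuted among themselves by $\sigma$ and are all removed at once with $w$, while the former are matched by deleting the twin vertex $w'$ in the $G^\tau$-component. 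Checking that these two kinds of stars recombine into a $\sigma$-invariant deletion --- and hence restore symmetry --- completes the argument.
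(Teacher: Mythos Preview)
Your proposal is correct and follows essentially the same mirroring strategy as the paper's proof: play on $G\cup G^\tau$, answer moves in the copy of $G^\tau$ with the same move in $G^\tau\subseteq G$ and vice versa, and answer moves on non-fixed objects by their $\tau$-image. The paper states this in three sentences without verifying the details, whereas you carefully work through the vertex-deletion cases; your observation that hypothesis (ii) is exactly what makes the stars of $v$ and $\tau(v)$ disjoint is the key point, and your treatment of a fixed vertex $w$ (whose non-fixed incident edges are already $\sigma$-paired among themselves) is the one subtlety the paper glosses over.
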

\begin{proof}
Let $H$ be a copy of $G^\tau$ and consider the game played on the disjoint union of $G$ and $H$. The second player has the winning strategy of responding to any move in $H$ with the same move in $G^\tau$ and vice versa. Any removal of vertices or edges outside of $G^\tau$ is answered by removing their image under $\tau$. Hence $0=\n(G\cup H)=\n(G)\oplus\n(H)$ and the result follows.
\end{proof}

\SpecialCoor
\psset{griddots=5,subgriddiv=0,gridlabels=0pt}
\psset{xunit=0.7cm, yunit=0.7cm, runit=0.7cm}
\psset{linewidth=1pt}
\psset{dotsize=5pt 0,dotstyle=*}

\begin{figure}[ht]
\centering
\begin{pspicture}(0,-0.5)(18,2.2) 

\psset{arrowscale=2,arrowinset=0.5}

\psline[linecolor=orange](0,1)(0,2)(1,1)(1,2)(0,2)
\psline[linecolor=orange](0,1)(0,0)(1,1)(1,0)(0,0)
\psline(0,1)(4,1)
\psline(1,1)(2,2)(2,0)(4,0)
\psline(2,2)(3,1)
\psline(2,1)(3,0)

\psdots[linecolor=orange](0,0)(0,2)(1,0)(1,2)
\psdots(0,1)(1,1)(2,0)(2,1)(2,2)(3,0)(3,1)(4,0)(4,1)

\psline[linecolor=orange](8,1)(12,1)
\psline[linecolor=orange](9,1)(10,2)(11,1)
\psline(10,2)(10,0)(12,0)
\psline(10,1)(11,0)

\psdots[linecolor=orange](8,1)(9,1)(11,1)(12,1)
\psdots(10,1)(10,0)(10,2)(11,0)(12,0)

\psline(16,2)(16,0)(18,0)
\psline(16,1)(17,0)
\psdots(16,0)(17,0)(18,0)(16,1)(16,2)

\psline{->}(5.5,1)(6.5,1)
\psline{->}(13.5,1)(14.5,1)

\end{pspicture}
\caption{Using the symmetry lemma to simplify}
\label{syme}
\end{figure}
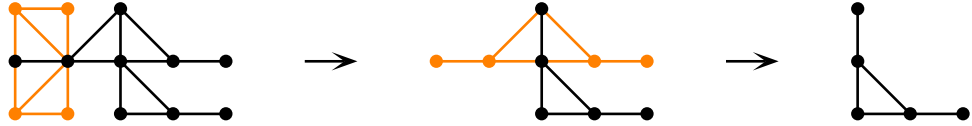

Lemma \ref{twin} and its proof are  based on a combination of \cite[Lemma 3]{FS} and \cite[Prop. 3]{DR}. This important principle of simplifying a game position using symmetry is also used in \cite[Lemma 3]{FS}, \cite[Thm. 6]{Ri} for hypergraphs, \cite[Thm. 1]{Ri}, \cite[Thm. 1]{Y} for simplicial complexes, and \cite[Lemma 2.21]{FR}, \cite[Sect. 2.4]{BC} for posets.

We may list here some applications of the symmetry lemma:
\begin{itemize}
  \item An easy application   shows that a graph with two edges connecting a pair of vertices has the same nim-value when both edges are removed. So $m$ edges between a pair of vertices (or $m$ loops at a single vertex) simplify to a single edge if $m$ is odd and no edge of $m$ is even.
  \item  A second application of Lemma \ref{twin} leads to
\begin{equation}\label{kn}
  \n(K_n)=n_{(3)}
\end{equation}
where $K_n$ is the complete graph on $n$ vertices.
Removing a vertex of $K_n$ gives $K_{n-1}$ and removing an edge  leaves $K_{n-2}$ since we may take a $\tau$ in the symmetry lemma  that switches the deleted edge's endpoints and fixes the remaining vertices. Then \eqref{kn} follows using \eqref{mex} and induction. A similar argument for the complete multipartite graph $K_{n_1,n_2, \dots, n_r}$ shows
\begin{equation}\label{kn2}
  \n(K_{n_1,n_2, \dots, n_r})=\left((n_1)_{(2)}+(n_2)_{(2)}+ \dots + (n_r)_{(2)}\right)_{(3)}.
\end{equation}
Formulas \eqref{kn} and \eqref{kn2} first appeared in \cite{FS}. (In \cite{GN} they showed that $\n(K_n)=0$ if and only if $3 \mid n$.) We generalize \eqref{kn} in Theorem \ref{kngen} by adding loops to $K_n$.
  \item In the preprint \cite{KN}, an argument based on Lemma \ref{twin} succeeds in computing the nim-values of generalized Kneser graphs. For example,  the Petersen graph is shown to have nim-value $2$.
  \item Clearly the cancellation  described in the introduction and pictured in Figure \ref{cancel} is a special case of the symmetry lemma.
\end{itemize}

\begin{adef}{\rm
Recall that a graph is {\em reduced} if no further cancellations are possible. A graph is {\em simplified} if no further non-trivial applications of the symmetry lemma are possible. }
\end{adef}

As we have seen,  simplified implies  reduced.
The next lemma shows that the reduced version of a graph is well-defined up to isomorphism.

\begin{lemma} \label{red}
Let $G$ be a graph. Suppose $G_1$ and $G_2$ are graphs obtained by reducing $G$. Then $G_1$ and $G_2$ are isomorphic.
\end{lemma}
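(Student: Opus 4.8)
The plan is to recognize cancellation as an abstract rewriting relation on isomorphism classes of finite graphs and to invoke Newman's Lemma: a terminating relation is confluent as soon as it is locally confluent, and a confluent terminating relation has a unique normal form. Here a normal form is precisely a reduced graph, so once confluence is established, any two graphs obtained by reducing $G$ must coincide with the unique normal form of the class of $G$, which is exactly the assertion of the lemma. Write $[G]\to[G']$ when $G$ admits a single cancellation producing $G'$; this is well defined on isomorphism classes. Termination is immediate: a cancellation deletes the two branches $H_1,H_2$, each containing at least its attaching vertex, so every step removes at least two vertices and no infinite chain of reductions is possible.

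It remains to prove local confluence: if $G\to_C G_C$ and $G\to_{C'}G_{C'}$ are two single cancellations, then $G_C$ and $G_{C'}$ have a common reduct up to isomorphism. The structural fact that organizes the argument is that each branch of a cancellation is one side of a bridge of $G$: the edge $sv_i$ is a bridge, and $H_i$ is the component of $G-sv_i$ on the side away from $s$. Passing to the bridge tree, whose vertices are the $2$-edge-connected components and whose edges are the bridges, a branch corresponds to the vertex set of a subtree; since the two sides of any two tree edges are nested or disjoint, the four branches $H_1,H_2,H_1',H_2'$ form a laminar family. I would first prove this laminarity statement, as it is what makes the case analysis finite and clean.

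The case analysis then runs as follows. If the supports $V(H_1)\cup V(H_2)$ and $V(H_1')\cup V(H_2')$ are disjoint, laminarity forces each anchor to lie outside the other's branches, so $C$ and $C'$ commute and $G_C\to_{C'}R$, $G_{C'}\to_C R$ reach a common $R$. If the whole of $C'$ (anchor and both branches) lies inside a single branch, say $H_1$, then transporting $C'$ through the isomorphism $H_1\cong H_2$ that defines $C$ gives a mirror cancellation inside $H_2$; applying it to $G_{C'}$ leaves two branches that are again isomorphic and still pendant at $s$, so a final application of $C$ removes them and recovers $G_C$. Here one checks that the attaching vertices $v_1,v_2$ are never deleted by $C'$, so the rootedness needed for the last cancellation survives. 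The remaining possibility is that the two cancellations share the same anchor $s$ and a common branch; then all the branches at $s$ involved are mutually isomorphic and $G_C$ and $G_{C'}$ are already isomorphic.

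The step I expect to be the main obstacle is ruling out a ``straddling'' configuration, in which one branch of $C'$ lies inside a branch $H_1$ of $C$ while the other branch of $C'$ lies outside it. I would show this cannot occur: the edge from the anchor $s'$ to the outside branch would have to cross the unique bridge bounding $H_1$, which pins down $s'$ and the outside branch so tightly that an isomorphism between a proper sub-branch of $H_1$ and essentially all of the rest of the graph is forced, contradicting a vertex count, since it would make the complement $G'$ of the branches empty although it must contain $s$. Closing off this case, together with the careful verification that the mirrored cancellation is genuinely valid and reaches $G_C$ up to isomorphism, is the technical heart of the proof; the rest is bookkeeping with Newman's Lemma at the level of isomorphism classes.
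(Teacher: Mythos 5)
Your argument is correct but follows a genuinely different route from the paper's. The paper inducts on $|V(G)|$: it fixes one cancellation vertex $s$, pairs off all $k$ isomorphic copies attached there to form $G'$, asserts that every reduced form of $G$ is isomorphic to a graph obtained by reducing $G'$, and applies the induction hypothesis; the interaction between distinct cancellations is compressed into that single assertion. You instead set up cancellation as a terminating rewriting relation on isomorphism classes and prove local confluence, so Newman's Lemma gives uniqueness of normal forms. Your case analysis is the right one: branches are sides of bridges and hence laminar; disjoint cancellations commute; a cancellation contained in one branch $H_1$ mirrors through the isomorphism $H_1\cong H_2$ and is then absorbed by the original cancellation at $s$; a shared anchor and shared branch gives isomorphic results outright; and the straddling configuration dies on a vertex count, because the outer branch of $C'$ would have to be the entire far side of the bridge $sv_1$ and hence strictly larger than any subtree of $H_1$ it must match. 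Two details to nail down in the write-up: the paper's definition permits $H_1,H_2$ to be disconnected, in which case the components not containing $v_i$ are isolated components of $G$ and should be split off before identifying branches with bridge sides; and in the disjoint case the claim that each anchor lies outside the other's branches is not pure laminarity but needs the short extra observation that an anchor inside $H_1$ would force one of its branches to contain $s$ and hence $H_2$. Your route is longer but makes explicit precisely the confluence fact that the paper's phrase ``must be subgraphs of $G'$ \dots or else they are not simplified'' takes for granted.
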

\begin{proof}
We use induction on the number of vertices of $G$. The lemma is true in the base case of an empty graph. If $G$ does not reduce then $G_1=G_2=G$. Otherwise, suppose $G$ has a vertex $s$ where cancellation is possible. Let there be a total of $k$ isomorphic copies of $H$ attached to $s$. Cancelling in pairs we obtain $G'$ with all of the $H$s deleted if $k$ is even and  one copy left if $k$ is odd. Now $G_1$ and $G_2$ must be subgraphs of $G'$ if $k$ is even - or else they are not simplified. If $k$ is odd then we may say that $G_1$ and $G_2$ are isomorphic to subgraphs of $G'$. Replace $G_1$ and $G_2$ by these isomorphic  subgraphs of $G'$ for clarity. By induction, the lemma is true for $G'$ and so $G_1$ and $G_2$ are isomorphic.
\end{proof}

In the case of a tree, we claim that any application of the symmetry lemma  must involve just  cancellation. To see this, suppose a $\tau$ from that lemma maps vertex $a$ to $b$ for $a\neq b$. We use the notation $P(x,y)$  for the unique simple path on a tree connecting two vertices $x$ and $y$. Then $\tau(P(a,b))=P(a,b)$, since $\tau^2$ is the identity, and $P(a,b)$ must have an even number of edges with the middle vertex $r$ fixed by $\tau$. In this way we see that the tree attached at $r$ containing $a$ cancels with the tree attached at $r$ containing $b$. Any further vertices of the tree not fixed by $\tau$ will cancel in the same way. This proves the claim and shows that if a tree is reduced then it is simplified.

\section{Telescoping vertices and cancellation} \label{st}

The graphs we study in Sections \ref{st}, \ref{unique} and \ref{1cyc} contain a single odd cycle and no further cycles or loops. In general, such a graph $G$ consists of a cycle component, made up of a cycle with trees attached to its vertices, and a number of disconnected trees. It is easy to see that any application of the symmetry lemma  to a graph with exactly one odd cycle must act as the identity on this cycle. It follows from this, and the discussion at the end of the last section, that  $G$ above is simplified if and only if it is  reduced.

We next develop some properties of cancellation and telescoping that we will need. For a graph $G$ containing a vertex $v$, let $G-v$ be the graph obtained  by deleting $v$ and all edges incident with $v$. Recall that cancellation occurs at a vertex $s$, say, when we have the situation in Figure \ref{cancel}.

\SpecialCoor
\psset{griddots=5,subgriddiv=0,gridlabels=0pt}
\psset{xunit=0.7cm, yunit=0.7cm, runit=0.7cm}
\psset{linewidth=1pt}
\psset{dotsize=5pt 0,dotstyle=*}
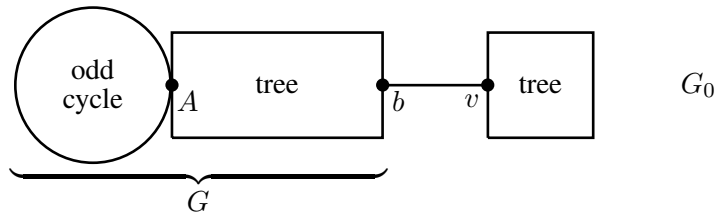
\begin{figure}[ht]
\centering
\begin{pspicture}(-3,-1)(13,3.5) 

\psset{arrowscale=2,arrowinset=0.5}

\pscircle(0.5,1.5){1.5}
\rput(0.5,1.8){odd}
\rput(0.5,1.2){cycle}
\rput(2.3,1.2){$A$}

\psline(2,0.5)(6,0.5)(6,2.5)(2,2.5)(2,0.5)
\psline(6,1.5)(8,1.5)
\psline(8,0.5)(10,0.5)(10,2.5)(8,2.5)(8,0.5)

\psdots(2,1.5)(6,1.5)(8,1.5)

\rput(4,1.5){tree}
\rput(9,1.5){tree}
\rput(6.3,1.2){$b$}
\rput(7.7,1.2){$v$}
\rput(12,1.5){$G_0$}
\rput(2.5,-0.5){$\underbrace{\quad \qquad \qquad \qquad \qquad \qquad \qquad}_{\text{\normalsize $G$}}$}

\end{pspicture}
\caption{Examining possible cancellation in $G$ when $v$ is removed}
\label{te}
\end{figure}

\begin{lemma} \label{can}
Attach  an odd cycle  to a tree at vertex $A$ and, from a vertex $b$ in this tree, join another tree using the edge $bv$ as shown in Figure \ref{te}. Let $G_0$ be this graph and assume it is  reduced.  Let $G$ be the  connected component of  the cycle that remains when $v$ is deleted. If cancellation is now possible in $G$ at a vertex $s$ then:
\begin{enumerate}
  \item The vertex $s$ is  in $P(A,b)-b$.
\item The vertex $s$ is unique.
  \item Cancellation at $s$ can only remove vertices $x$ which have the property that $P(A,x)$ contains  $s$.
  \item Suppose the cancellation at $s$ is carried out. If another cancellation is now possible then it must occur at a unique vertex in  $P(A,s)-s$.
\end{enumerate}
\end{lemma}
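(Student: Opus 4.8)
The plan is to reduce all four parts to two structural principles and a single inductive idea. Write the spine $P(A,b)$ as $A=p_0,p_1,\dots,p_m=b$, and for each $i$ let $U_i$ be the branch of $G_0$ hanging off $p_i$ in the direction of $p_{i+1}$; thus $U_i$ contains the whole tail $p_{i+1},\dots,p_m=b$ together with the subtree $S$ attached through the edge $bv$. Let $U_i^{-}$ be $U_i$ with $S$ removed, which is exactly the branch pointing toward $b$ off $p_i$ in $G=G_0-v$. The first principle I would establish is that, by the discussion preceding Section \ref{st}, any cancellation is an application of the symmetry lemma fixing the cycle, so a cancellation at a vertex $s$ can only interchange two isomorphic branches hanging off $s$ and pointing away from the cycle; since $G_0$ is reduced, at every vertex these away-from-cycle branches are pairwise non-isomorphic. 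The second principle, which I would record as a short remark, is that being reduced is an isomorphism invariant of trees: by Lemma \ref{red} isomorphic trees have isomorphic reduced forms, so a reduced tree cannot be isomorphic to a non-reduced one. (Here I also use that every subtree of the reduced graph $G_0$ is itself reduced, since any internal cancellation would lift to one in $G_0$.)

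For (i) and (iii) the work is local. Passing from $G_0$ to $G$ deletes only the branch $S$ off $b$, hence changes no branch except along the spine. If $s\notin P(A,b)$, then the unique paths from $s$ to $A$ and from $s$ to $b$ share their first edge, so $A$, $b$ and $S$ all lie in the single branch of $s$ that contains the cycle; every other branch off $s$ is untouched, remains mutually non-isomorphic, and the cycle-branch cannot take part in a cancellation. Thus $s$ must lie on $P(A,b)$. At $s=b$ the deletion merely removes one away-from-cycle branch, leaving the rest unchanged and non-isomorphic, so no cancellation appears there; hence $s\in P(A,b)-b$, giving (i). For (iii), a cancellation at $s$ removes two branches hanging off $s$ away from the cycle, and every vertex $x$ in such a branch has $P(A,x)$ passing through $s$.

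For (ii), the key observation is that, among the branches off $p_i$, only $U_i^{-}$ is altered by deleting $v$; since the sibling branches are unchanged and pairwise non-isomorphic, a cancellation at $p_i$ is possible if and only if $U_i^{-}$ is isomorphic to some sibling branch off $p_i$. I would then take $j$ to be the largest index at which a cancellation is possible, so $U_j^{-}\cong B$ for a sibling $B$ off $p_j$. For any $i<j$, the branch $U_i^{-}$ contains the vertex $p_j$ with its two isomorphic sub-branches $U_j^{-}$ and $B$; hence $U_i^{-}$ is \emph{not} reduced, whereas every sibling off $p_i$ is reduced. By the isomorphism-invariance of reducedness, $U_i^{-}$ cannot match any sibling, so no cancellation is possible at $p_i$. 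This forces $j$ to be the unique cancellation vertex, proving (ii); this uniqueness, powered by the nesting of non-reducedness along the spine, is the main obstacle and the heart of the lemma.

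For (iv) I would argue in the same spirit after carrying out the cancellation at $s=p_i$. That cancellation deletes $U_i^{-}$, and hence all of $p_{i+1},\dots,p_m$, so no spine vertex beyond $s$ survives; at $p_i$ itself the remaining siblings are still pairwise non-isomorphic, so no new cancellation occurs there; and every off-spine branch is unchanged and reduced. Consequently the only branches that have been modified are the toward-$b$ branches off $p_0,\dots,p_{i-1}$, so any further cancellation lies on $P(A,s)-s$. Uniqueness of its location then follows by repeating verbatim the ``largest index'' nesting argument of (ii), now applied to the spine $p_0,\dots,p_{i-1}$. Together these establish the telescoping behaviour asserted in the lemma.
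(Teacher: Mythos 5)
Your proof is correct, but it is organized quite differently from the paper's, so a comparison is worthwhile. For parts (i) and (iii) the two arguments are close in spirit: the paper gets (i) from the observation that, $G_0$ being reduced, $v$ must be adjacent to one of the cancelling subtrees, so $b\in H_1\cup H_2$ and $s$ separates $A$ from $b$; you get it by noting that deleting $v$ alters only the toward-$b$ branches along the spine $P(A,b)$. The real divergence is in (ii) and (iv). The paper proves uniqueness by a mirror-image trick: if a second cancellation occurred at $s'\in H_1$, the corresponding vertex $w\in H_2$ would also admit a cancellation, contradicting (i); hence $s'\in P(A,s)$ and, switching roles, $s\in P(A,s')$, forcing $s=s'$. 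It then gets (iv) by recursion, replacing $G_0$ by $G_1:=G-H_2$, $v$ by the neighbour of $s$ in $H_1$, and $b$ by $s$, and reapplying (i)--(ii). You instead prove uniqueness by exhibiting the cancellation vertex explicitly as the largest index $j$ with $U_j^-$ isomorphic to a sibling branch, and you rule out smaller indices because $U_i^-$ then visibly contains a cancellation pair (so is non-reduced) while every sibling branch off $p_i$ is reduced and reducedness is an isomorphism invariant; the same nesting argument disposes of (iv) directly. Your route costs more setup but buys two things: it locates the cancellation vertex constructively, which makes the telescoping mechanism of Figure \ref{tele} transparent, and it avoids the recursion in the paper's (iv), which tacitly requires checking that the intermediate graph $G_1$ still satisfies the lemma's hypotheses. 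Two small points of hygiene: the pairwise non-isomorphism of sibling branches and the invariance of reducedness should both be stated for \emph{rooted} trees (branches rooted at the neighbour of the cancellation vertex), since that is the notion cancellation uses; and the appeal to Lemma \ref{red} is heavier than needed --- that a reduced rooted tree cannot be isomorphic to a non-reduced one follows directly because an isomorphism transports any cancellation pair.
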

\begin{proof}
Suppose the   isomorphic subtrees $H_1$ and $H_2$ cancel in $G$ at $s$,  as seen in Figure \ref{cancel}, with $G'$ being the graph that remains after cancellation. Note that $H_1$ and $H_2$ must each contain at least one vertex. We have $A\in G'$ since the cancellation at $s$ corresponds to an automorphism from the symmetry lemma and $A$ is fixed by any such automorphism. Part (iii) follows from this observation. We must have $v$ adjacent to one of the vertices of $H_1$ or $H_2$ in $G_0$ since $G_0$ is reduced. Hence $b$ is in $H_1$ or $H_2$ and  this proves (i).

Without losing generality,  assume that $b\in H_1$.
Suppose $G$ has cancellation at the vertex $s'$ as well as $s$. Then  $s' \in P(A,b)-b$ by part (i). 
We claim that $s'$ cannot be a vertex of $H_1$. Suppose $s'\in H_1$ and that $w\in H_2$ corresponds to $s'$ under the isomorphism between $H_1$ and $H_2$. Then cancellation at $s'$ means that there is also cancellation at $w$. However this contradicts our requirement from part (i)  that $w \in P(A,b)-b$  and so we have proved our claim. It follows that $s'\in G' \cap P(A,b) = P(A,s)$. Switching the roles of $s$ and $s'$ shows that $s\in P(A,s')$ as well. Consequently we must have $s=s'$, proving  (ii).

Label $G_1$ the subgraph of $G$ obtained by removing $H_2$. Let $v_1 \in H_1$ be adjacent to $s$. Then replace $G_0$ by $G_1$, $v$ by $v_1$, $b$ by $s$ and apply parts (i), (ii) to obtain (iv).
\end{proof}

If an odd cycle has trees attached to a number of its vertices, then Lemma \ref{can} applies to each of these trees separately. Since the odd cycle is fixed by the symmetry lemma, there can be no cancellation between trees attached at different vertices of the cycle.

Let $G$ be a reduced graph consisting of an odd cycle attached to a tree at one point. Suppose $G$ contains a telescoping vertex, so that deleting it gives a cycle component that may be reduced to the cycle in $q$ cancellations. Label this vertex $a_{q+1}$. We next give a precise description of the structure of $G$.

The simplest case of $q=0$ is displayed in Figure \ref{tele}; removing $a_1$ disconnects the tree  $T_1$ and only the cycle is left.
\SpecialCoor
\psset{griddots=5,subgriddiv=0,gridlabels=0pt}
\psset{xunit=0.7cm, yunit=0.7cm, runit=0.7cm}
\psset{linewidth=1pt}
\psset{dotsize=5pt 0,dotstyle=*}
\begin{figure}[ht]
\centering
\begin{pspicture}(-1,-4)(18,3.5) 

\psset{arrowscale=2,arrowinset=0.5}

\pscircle(0.5,1.5){1.5}
\rput(0.5,1.8){odd}
\rput(0.5,1.2){cycle}
\rput(2.2,0.9){$A$}

\psline(4,2)(6,2)(6,3)(4,3)(4,2)
\psline(4,0)(6,0)(6,1)(4,1)(4,0)
\psline(8.5,2)(10.5,2)(10.5,3)(8.5,3)(8.5,2)
\psline(8.5,0)(10.5,0)(10.5,1)(8.5,1)(8.5,0)
\psline(13,2)(15,2)(15,3)(13,3)(13,2)

\psline(4,2.5)(2,1.5)(4,0.5)
\psline(8.5,2.5)(6,2.5)(8.5,0.5)
\psline(10.5,2.5)(13,2.5)
\psdots(2,1.5)(4,2.5)(4,0.5)(6,2.5)(6,0.5)(8.5,2.5)(8.5,0.5)(10.5,2.5)(10.5,0.5)(13,2.5)

\pscircle(6.5,-2.5){1.5}
\psline(10.5,-2)(12.5,-2)(12.5,-3)(10.5,-3)(10.5,-2)
\psline(8,-2.5)(10.5,-2.5)
\psdots(8,-2.5)(10.5,-2.5)

\rput(11.5,-2.5){$T_{q+1}$}
\rput(9.85,-2.1){$a_{q+1}$}

\pscircle[fillstyle=solid,fillcolor=red](13,2.5){0.18}
\pscircle[fillstyle=solid,fillcolor=red](10.5,-2.5){0.18}

\rput(6.5,-2.2){odd}
\rput(6.5,-2.8){cycle}
\rput(8.2,-3){$A$}

\rput(5,2.5){$T_1$}
\rput(3.6,2.9){$a_1$}
\rput(6.4,2.9){$b_1$}
\rput(5,0.5){$U_1$}
\rput(3.6,0.1){$c_1$}
\rput(6.4,0.1){$d_1$}

\rput(9.5,2.5){$T_q$}
\rput(8.1,2.9){$a_q$}
\rput(10.9,2.9){$b_q$}
\rput(9.5,0.5){$U_q$}
\rput(8.1,0.1){$c_q$}
\rput(10.9,0.1){$d_q$}

\rput(14,2.5){$T_{q+1}$}
\rput(12.35,2.9){$a_{q+1}$}

\rput(17,1.5){$q=2$}
\rput(17,-2.5){$q=0$}

\end{pspicture}
\caption{Graphs with telescoping vertices $a_{q+1}$}
\label{tele}
\end{figure}
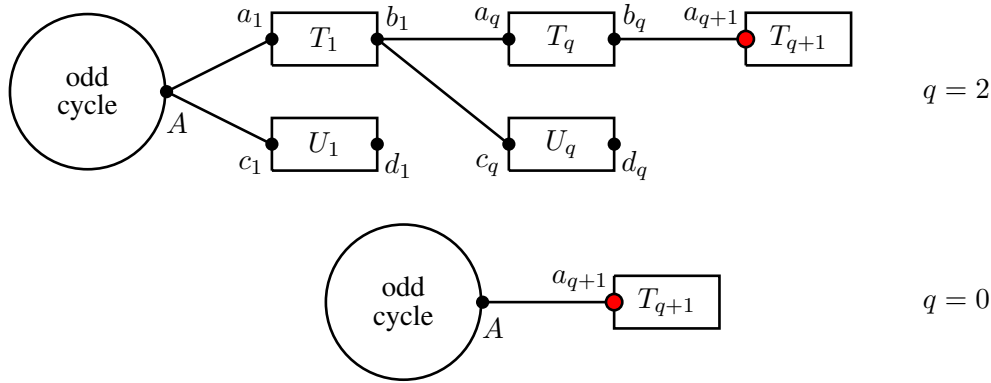
For $q\gqs 1$, Lemma \ref{can} part (ii) implies the first cancellation after deleting $a_{q+1}$ must be at a unique vertex we label $b_{q-1}$ with two isomorphic trees $T_q$ and $U_q$ cancelling as seen in Figure \ref{tele}. If further cancellation is possible then Lemma \ref{can} part (iv) shows it must be at a unique vertex we label $b_{q-2}$. Continuing in this way we obtain the well-defined cancellation vertices $\{b_{q-1}, b_{q-2}, \dots, b_1,A\}$ and in particular the number $q$ is well-defined.

We have shown that $G$ must look like the graphs in Figure \ref{tele}, where $a_{q+1}$ is the telescoping  vertex.
For $1\lqs i\lqs q$ the trees $T_i$ and $U_i$ are isomorphic with $a_i$ corresponding to $c_i$ and  $b_i$ corresponding to $d_i$. Note that these vertices may have large degrees. On the other hand, $T_i$ may be a single vertex in which case $a_i$ and $b_i$ coincide (and then similarly for $U_i$).
In this way we see that having a telescoping vertex can be a fairly complicated situation. It is straightforward, at least, to prove these necessary conditions.

\begin{lemma} \label{teleprops}
Let $G$ be a reduced graph consisting of an odd cycle attached at vertex $A$ to a tree. Let the set of vertices of the tree a distance $x$ from $A$ be labelled $S_x$. Suppose $G$ has a telescoping vertex $v\in S_d$ for  $d\gqs 1$. Then the following are true:
\begin{enumerate}
  \item We have $\deg A \lqs 4$ and $\deg A=3$ if and only if $d=1$.
  \item The numbers $|S_1|, |S_2|, \dots, |S_{d-1}|$ are even and $|S_d|$ is odd.
  \item The total degree of the vertices in $S_d-v$ is even.
\end{enumerate}
\end{lemma}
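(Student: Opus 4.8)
The plan is to read all three statements off the explicit description of $G$ in Figure \ref{tele}, exploiting the single structural feature that makes the parity bookkeeping work: for each $i$ with $1\lqs i\lqs q$ the isomorphic branches $T_i\cong U_i$ are attached to the rest of $G$ only at their roots $a_i$ and $c_i$, and these two roots lie at a common distance from $A$. I would first fix notation for distances. Put $b_0:=A$ and let $\delta_i$ denote the distance from $A$ to $b_i$. Since in the tree the only route from $A$ into $T_i$ passes through $a_i$, every vertex $t\in T_i$ satisfies $\mathrm{dist}(A,t)=(\delta_{i-1}+1)+\mathrm{dist}_{T_i}(a_i,t)$, and similarly every $u\in U_i$ satisfies $\mathrm{dist}(A,u)=(\delta_{i-1}+1)+\mathrm{dist}_{U_i}(c_i,u)$; in particular $a_i,c_i\in S_{\delta_{i-1}+1}$ and $b_i\in S_{\delta_i}$. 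Because the path from $b_{i-1}$ to $b_i$ contains at least the connector edge $b_{i-1}a_i$, the distances $0=\delta_0<\delta_1<\cdots<\delta_q$ are strictly increasing, and the telescoping vertex $v=a_{q+1}$, attached at $b_q$, lies in $S_d$ with $d=\delta_q+1$. As the whole branch $T_{q+1}$ hangs off $v$, we have $T_{q+1}\cap S_x=\emptyset$ for $x<d$ and $T_{q+1}\cap S_d=\{v\}$.

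The key step is the observation that the isomorphism $T_i\cong U_i$ sending $a_i\mapsto c_i$ preserves distance from the common root, and therefore maps $T_i\cap S_x$ bijectively onto $U_i\cap S_x$ for every $x$, while also preserving internal tree degrees. Every tree vertex of $G$ lies in $\{A\}\cup\bigcup_{i=1}^q(T_i\cup U_i)\cup T_{q+1}$, so for $x\gqs 1$ each term $|(T_i\cup U_i)\cap S_x|=2\,|T_i\cap S_x|$ is even and hence $|S_x|\equiv |T_{q+1}\cap S_x|\pmod 2$. This proves (ii) immediately: $|S_x|$ is even for $1\lqs x\lqs d-1$ and $|S_d|\equiv 1\pmod 2$. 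For (iii), I would note that the only vertices of $G$ carrying edges beyond their internal tree degree are the connector endpoints, namely $a_i,c_i$ (at distance $\delta_{i-1}+1$) and $b_i$ (at distance $\delta_i$) for $1\lqs i\lqs q$, together with $v$; since $\delta_i\lqs\delta_q<d$, none of $a_i,c_i,b_i$ lies in $S_d$. Hence every vertex of $S_d-v=\bigcup_{i=1}^q(T_i\cup U_i)\cap S_d$ has $G$-degree equal to its internal tree degree, and the bijection $T_i\cap S_d\to U_i\cap S_d$ pairs these degrees into equal summands, so their total is even.

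Finally (i) comes from counting the tree-edges at $A$. The last cancellation in the telescoping occurs at $A$ and must remove all of the tree except $A$; since $G$ is reduced, no two branches at $A$ already cancel, so for $q\gqs 1$ the vertex $A$ is joined to the tree by exactly the two edges $Aa_1$ and $Ac_1$ (to the branches $T_1\cong U_1$), giving $\deg A=4$, whereas for $q=0$ the single branch is attached at $v=a_1$ and $\deg A=3$. Since $d=\delta_q+1=1$ exactly when $q=0$, this yields $\deg A\lqs 4$ with $\deg A=3$ if and only if $d=1$.

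I expect the only delicate points to be the two geometric facts invoked above: that a geodesic from $A$ enters each $T_i$ (and each $U_i$) through its root, so that the isomorphism $T_i\cong U_i$ genuinely respects the distance levels $S_x$, and that every connector endpoint sits strictly below level $d$. Both are immediate from the spine $A,a_1,\dots,b_1,a_2,\dots,b_q,v$ of $G$, but they are exactly what must be checked to be sure the even/odd count is not disturbed at level $d$.
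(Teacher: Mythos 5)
Your proof is correct and takes essentially the same route the paper intends: the paper gives no explicit argument for this lemma, instead establishing the Figure~\ref{tele} decomposition of $G$ via Lemma~\ref{can} and declaring the three properties ``straightforward'' consequences of it. Your level-by-level pairing of $T_i\cap S_x$ with $U_i\cap S_x$, the observation that all connector endpoints lie strictly below level $d$, and the two-branch count at $A$ are exactly the omitted bookkeeping.
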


For  $G$, as in Lemma \ref{teleprops}, it follows from (ii) above that if there are any further telescoping vertices then they must also be in $S_d$. In fact we will show in the next section that telescoping vertices are unique. This requires  some more definitions, notation and a couple of lemmas on subgraphs of isomorphic rooted trees.

Suppose $T$ is a tree with root vertex $r$. We use the notation $T(r)$ for this rooted tree. For any vertex $x$ of $T$ we set
$\rho_T(x)$ to be the subgraph of $T$ induced by the set of vertices $\{v\in T\ :\ x\in P(r,v)\}$.
Then $\rho_T(x)$ consists of $x$ and everything in $T$ on the other side of $x$ from the root.

 An isomorphism of rooted trees $\psi:T(r)\to U(s)$ is a graph isomorphism mapping $T$ to $U$ and $r$ to $s$.  For two vertices $x,y$ in $T$, the distance between them is the length (number of edges) of $P(x,y)$. Clearly
\begin{equation*}
  \psi(P(x,y))=P(\psi(x),\psi(y))
\end{equation*}
and so the isomorphism $\psi$ preserves distance. We also have
\begin{equation*}
  \psi(\rho_T(x))=\rho_{U}(\psi(x)).
\end{equation*}

Let $C$ and $T'$ be  trees with $C$    attached  at  $q$ to $T'$. Recall this means that $C\cap T'=q$.  Let $T=C\cup T'$ and choose a root for $T$. If this root  is in $T'$ then $C \subseteq \rho_T(q)$. There may be  another tree $C'$ attached at $q$ so that $\rho_T(q) = C\cup C'$. Note that for any $v \in C-q$ we have $\rho_T(v) \subseteq C-q$.

\SpecialCoor
\psset{griddots=5,subgriddiv=0,gridlabels=0pt}
\psset{xunit=0.7cm, yunit=0.7cm, runit=0.7cm}
\psset{linewidth=1pt}
\psset{dotsize=5pt 0,dotstyle=*}
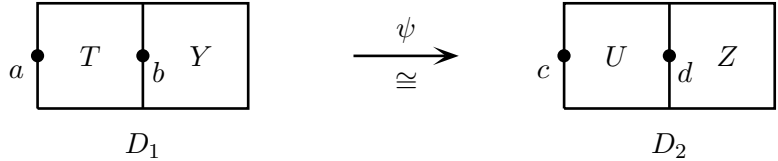
\begin{figure}[ht]
\centering
\begin{pspicture}(0,-1)(14,2.5) 

\psset{arrowscale=2,arrowinset=0.5}

\psline(0,0)(4,0)(4,2)(0,2)(0,0)
\psline(2,0)(2,2)
\psline(10,0)(14,0)(14,2)(10,2)(10,0)
\psline(12,0)(12,2)

\psline{->}(6,1)(8,1)

\psdots(0,1)(2,1)
\psdots(12,1)(10,1)

\rput(7,1.5){$\psi$}
\rput(7,0.5){$\cong$}

\rput(1,1){$T$}
\rput(3.1,1){$Y$}
\rput(2.3,0.7){$b$}
\rput(-0.4,0.7){$a$}

\rput(13.1,1){$Z$}
\rput(12.3,0.7){$d$}
\rput(9.6,0.7){$c$}
\rput(11,1){$U$}

\rput(2,-0.7){$D_1$}
\rput(12,-0.7){$D_2$}

\end{pspicture}
\caption{The isomorphic trees in Lemmas \ref{3pos} and \ref{tyuz}}
\label{tu}
\end{figure}

\begin{lemma} \label{3pos}
As shown in Figure \ref{tu}, let $T$ and $Y$ be trees with $T\cap Y=b$ and set $D_1:=T\cup Y$. Let $U$ and $Z$ be trees with $U\cap Z=d$ and set $D_2:=U\cup Z$. Suppose $a\in T$ and $c\in U$.
Let $\psi:D_1(a)\to D_2(c)$ be an isomorphism of rooted trees.

If $|V(Z)|\gqs |V(Y)|$ then there are three possibilities:
$$
\psi(Y)\subseteq U-d, \qquad \psi(Y)\subseteq Z-d \qquad\text{or}\qquad \psi(b) = d.
$$
\end{lemma}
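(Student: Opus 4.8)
The plan is to follow the single vertex $b$ (the point where $Y$ is attached to $T$) under the isomorphism and then let the relation $\psi(\rho_T(x))=\rho_U(\psi(x))$ carry the whole of $Y$ along with it. Since $a\in T$ and $Y$ meets $T$ only in $b$, the path from the root $a$ to any vertex of $Y$ passes through $b$; hence $Y\subseteq \rho_{D_1}(b)$, and applying $\psi$ gives $\psi(Y)\subseteq \psi(\rho_{D_1}(b))=\rho_{D_2}(b')$, where I write $b':=\psi(b)$. Everything then reduces to locating $b'$ inside $D_2=U\cup Z$, recalling $U\cap Z=\{d\}$. There are three possibilities: $b'=d$, or $b'\in Z-d$, or $b'\in U-d$.

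The first two are immediate. If $b'=d$ we are in the third stated case, $\psi(b)=d$. If $b'\in Z-d$, then the remark that $\rho_{D_2}(v)\subseteq Z-d$ for every $v\in Z-d$ yields $\psi(Y)\subseteq \rho_{D_2}(b')\subseteq Z-d$, with no appeal to the size hypothesis. So the whole argument lives in the case $b'\in U-d$, and here I would split further according to whether $b'$ is an ancestor of $d$, i.e. whether $b'\in P(c,d)$. When $b'\notin P(c,d)$ we have $d\notin\rho_{D_2}(b')$; since every path from $c$ into $Z-d$ runs through $d$, no vertex of $Z-d$ lies in $\rho_{D_2}(b')$ either, so $\rho_{D_2}(b')\subseteq U-d$ and hence $\psi(Y)\subseteq U-d$.

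The genuine obstacle is the remaining subcase $b'\in P(c,d)-d$, where $\rho_{D_2}(b')$ swallows all of $Z$ and a priori $\psi(Y)$ could straddle both $U$ and $Z$; this is the \emph{only} place the hypothesis $|V(Z)|\geq |V(Y)|$ is used. The key device is to pull $d$ back: set $d^*:=\psi^{-1}(d)$. Because $b'$ is an ancestor of $d$ we have $d\in\rho_{D_2}(b')$, so $d^*\in\rho_{D_1}(b)$, and $d^*\neq b$ since $b'\neq d$. If $d^*\notin Y$, then $\rho_{D_1}(d^*)$ lies in the part of $T$ beyond $b$ and is disjoint from $Y$; consequently $\psi(Y)$ is disjoint from $\rho_{D_2}(d)=\psi(\rho_{D_1}(d^*))\supseteq Z$, and as $\psi(Y)\subseteq\rho_{D_2}(b')\subseteq U\cup Z$ this forces $\psi(Y)\subseteq U-d$. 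If instead $d^*\in Y-b$, then $\rho_{D_1}(d^*)\subseteq Y$, whence $Z\subseteq\rho_{D_2}(d)=\psi(\rho_{D_1}(d^*))\subseteq\psi(Y)$. Now the size hypothesis squeezes: $|V(Z)|\leq |\psi(Y)|=|V(Y)|\leq|V(Z)|$, forcing $\psi(Y)=Z$, and then $b'=\psi(b)\in\psi(Y)=Z$ contradicts $b'\in U-d$. So this last subcase cannot occur, and every admissible position of $b'$ lands in one of the three stated conclusions.

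I expect the delicate points to be bookkeeping rather than substance: checking that $b$ is an ancestor of all of $Y$ (so that $\psi(Y)\subseteq\rho_{D_2}(b')$), translating ``$b'$ is an ancestor of $d$'' into the membership $d\in\rho_{D_2}(b')$, and confirming that the split $d^*\in Y$ versus $d^*\notin Y$ is exhaustive (it is, since $\rho_{D_1}(b)$ is the union of $Y$ and the part of $T$ beyond $b$, the two pieces meeting only in $b$, and $d^*\neq b$). The one genuinely non-formal step, and the crux of the lemma, is the cardinality squeeze that collapses $\psi(Y)$ onto $Z$ and thereby produces the contradiction.
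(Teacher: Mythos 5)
Your argument is correct and hinges on the same key step as the paper's: pulling $d$ (equivalently $Z$) back through $\psi$ into $Y-b$ and deriving a cardinality contradiction from $|V(Z)|\gqs |V(Y)|$. The paper reaches the three alternatives more economically --- it shows $d\notin\psi(Y-b)$ directly and then uses connectedness of $\psi(Y)$ inside $D_2-d$ --- whereas you track $\rho_{D_2}(\psi(b))$ through an explicit case analysis, but the substance is the same.
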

\begin{proof}
If $d\in \psi(Y-b)$ then $\psi^{-1}(d) \in Y-b$ and
$$
\psi^{-1}(Z) \subseteq \psi^{-1}(\rho_{D_2}(d)) = \rho_{D_1}(\psi^{-1}(d)) \subseteq Y-b.
$$
 But this is not possible as $\psi^{-1}(Z)$ has too many vertices. Hence $d\notin \psi(Y-b)$. If $\psi(b) \neq d$ then $d\notin \psi(Y)$. Since $\psi(Y)$ is connected we obtain $\psi(Y) \subseteq U-d$ or $Z-d$ completing the proof.
\end{proof}

\begin{lemma} \label{tyuz}
As shown in Figure \ref{tu}, let $T$ and $Y$ be trees with $T\cap Y=b$ and set $D_1:=T\cup Y$. Let $U$ and $Z$ be trees with $U\cap Z=d$ and set $D_2:=U\cup Z$. Suppose $a\in T$ and $c\in U$.
Let $\psi:D_1(a)\to D_2(c)$ be an isomorphism of rooted trees.

Let $\theta:T(a)\to U(c)$ also be an isomorphism of rooted trees, satisfying $\theta(b)=d$. Then $Y(b) \cong Z(d)$.
\end{lemma}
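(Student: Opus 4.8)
The plan is to reduce the statement to a single isomorphism of rooted subtrees and then finish by a multiset cancellation at the roots. Write $\mathrm{Ch}_{D_1}(b)$ for the \emph{children-profile} of $b$ in $D_1(a)$, meaning the multiset of isomorphism classes $[\rho_{D_1}(w)]$ (rooted at $w$) as $w$ runs over the neighbours of $b$ lying below $b$; since $a\in T$, these split into the neighbours inside $T$, giving $\mathrm{Ch}_T(b)$, and those inside $Y$, giving $\mathrm{Ch}_Y(b)$. Thus $\mathrm{Ch}_{D_1}(b)=\mathrm{Ch}_T(b)\uplus\mathrm{Ch}_Y(b)$, and likewise $\mathrm{Ch}_{D_2}(d)=\mathrm{Ch}_U(d)\uplus\mathrm{Ch}_Z(d)$. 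Because $\theta$ is a rooted isomorphism with $\theta(b)=d$, it restricts to $\rho_T(b)(b)\cong\rho_U(d)(d)$, so $\mathrm{Ch}_T(b)=\mathrm{Ch}_U(d)$. Since a rooted tree is determined up to isomorphism by the multiset of rooted subtrees hanging at the children of its root, it therefore suffices to prove $\rho_{D_1}(b)(b)\cong\rho_{D_2}(d)(d)$: this gives $\mathrm{Ch}_{D_1}(b)=\mathrm{Ch}_{D_2}(d)$, and cancelling the common part $\mathrm{Ch}_T(b)=\mathrm{Ch}_U(d)$ leaves $\mathrm{Ch}_Y(b)=\mathrm{Ch}_Z(d)$, which is exactly $Y(b)\cong Z(d)$. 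The one real difficulty is that $\psi$ is only guaranteed to send the root $a$ to $c$, and we have no control over $\psi(b)$; in particular we cannot simply read off $\rho_{D_1}(b)(b)\cong\rho_{D_2}(d)(d)$ from $\psi$.

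To locate the subtree at $b$ without knowing $\psi(b)$, I would pass to the multiset $\mathcal M(R):=\{\,[\rho_R(v)(v)] : v\in V(R)\,\}$ of isomorphism classes of all hanging subtrees of a rooted tree $R$, where $[\,\cdot\,]$ denotes isomorphism class. Since $\psi$ is a rooted isomorphism it preserves $\rho$, so $\mathcal M(D_1)=\mathcal M(D_2)$. Now decompose each side according to where the vertex sits. For $v$ on the path $P(a,b)$ the vertex $b$ is a descendant of $v$, so $\rho_{D_1}(v)=\rho_T(v)\cup Y$ glued at $b$; for $v\in V(T)$ off this path $b\notin\rho_T(v)$, so $\rho_{D_1}(v)=\rho_T(v)$; and for $v\in V(Y)\setminus\{b\}$ we have $\rho_{D_1}(v)\subseteq Y$. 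This partitions $\mathcal M(D_1)$ into three sub-multisets $A_1\uplus B_1\uplus C_1$, and similarly $\mathcal M(D_2)=A_2\uplus B_2\uplus C_2$ using $P(c,d)$, $U$ and $Z$. The isomorphism $\theta$ carries $P(a,b)$ to $P(c,d)$ and $V(T)\setminus P(a,b)$ bijectively onto $V(U)\setminus P(c,d)$, preserving each $\rho$; in particular $B_1=B_2$.

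Finally I would isolate $b$ by a size comparison. From $|V(D_1)|=|V(D_2)|$ (via $\psi$) and $|V(T)|=|V(U)|$ (via $\theta$) one gets $|V(Y)|=|V(Z)|=:m$. Every class in $A_1$ has at least $m$ vertices, since its representative contains $Y$, while every class in $C_1$ has fewer than $m$; the same holds for $A_2,C_2$. Cancelling $B_1=B_2$ from $\mathcal M(D_1)=\mathcal M(D_2)$ yields $A_1\uplus C_1=A_2\uplus C_2$, and separating by the clean size threshold at $m$ forces $A_1=A_2$. Along $P(a,b)$ the subtrees $\rho_T(v)$ strictly shrink, so the classes in $A_1$ have pairwise distinct, strictly decreasing sizes, the smallest being attained at $v=b$; likewise the smallest class of $A_2$ occurs at $d$. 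Equality of these two minimal classes is precisely $\rho_{D_1}(b)(b)\cong\rho_{D_2}(d)(d)$, which completes the reduction of the first paragraph. The main obstacle throughout is the uncontrolled value $\psi(b)$, and the device that overcomes it is this passage to the global subtree-spectrum together with the size threshold, which pins down the class of the subtree hanging at $b$ without ever identifying $\psi(b)$; this also sidesteps the case analysis that a direct appeal to Lemma \ref{3pos} would otherwise require.
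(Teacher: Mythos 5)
Your proof is correct, and it takes a genuinely different route from the paper's. The paper relabels so that $\theta$ is the identity, invokes Lemma \ref{3pos} to split into cases, and in the main case iterates $\psi$ (tracking images $Y_2,\dots,Y_n$ of $Y$ and of the complementary subtrees $Y'_i$ inside $T-b$) until the image of $b$ returns to $b$, finally concluding $Y\cup Y'\cong Z\cup Y'$ rooted at $b$ and cancelling. You avoid both Lemma \ref{3pos} and the iteration entirely: you compare the global multiset $\mathcal M(R)$ of isomorphism classes of hanging subtrees $\rho_R(v)(v)$, which $\psi$ preserves, partition it along $P(a,b)$ versus the rest of $T$ versus $Y-b$, cancel the middle parts using $\theta$, and then use the clean size threshold $|V(Y)|=|V(Z)|=m$ together with the strict decrease of $|\rho_{D_1}(v)|$ along $P(a,b)$ to pin down $[\rho_{D_1}(b)(b)]=[\rho_{D_2}(d)(d)]$ as the unique minimal class; the final step is the same multiset cancellation at the root that the paper uses only implicitly. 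All the supporting claims check out (the three-way partition of $V(D_1)$ is genuine, $C_1$-classes sit inside $Y-b$ so have at most $m-1$ vertices while $A_1$-classes contain all of $Y$, and $A_1=A_2$ forces equality of the unique minimum-size elements). What your approach buys is a static, invariant-based argument that never needs to locate $\psi(b)$ or control the dynamics of $\psi$, and it isolates the combinatorial content (rooted trees are determined by the children-profile of the root, plus multiset cancellation) that the paper leaves implicit; what the paper's approach buys is that the same iteration machinery ($Z_i$, $Z'_i$, $b_i$) is reused almost verbatim inside the harder Proposition \ref{onet}, so the two proofs share infrastructure there in a way yours would not.
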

\begin{proof}
To make the proof clearer,  relabel $U$ as $T$ and $c,d$ as $a,b$ respectively. In this way $\theta$ becomes the identity map and we have the situation in Figure \ref{tu2}.
\SpecialCoor
\psset{griddots=5,subgriddiv=0,gridlabels=0pt}
\psset{xunit=0.7cm, yunit=0.7cm, runit=0.7cm}
\psset{linewidth=1pt}
\psset{dotsize=5pt 0,dotstyle=*}
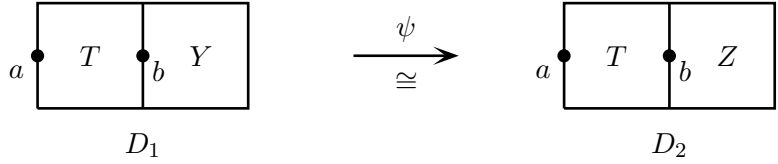
\begin{figure}[ht]
\centering
\begin{pspicture}(0,-1)(14,2.5) 

\psset{arrowscale=2,arrowinset=0.5}

\psline(0,0)(4,0)(4,2)(0,2)(0,0)
\psline(2,0)(2,2)
\psline(10,0)(14,0)(14,2)(10,2)(10,0)
\psline(12,0)(12,2)

\psline{->}(6,1)(8,1)

\psdots(0,1)(2,1)
\psdots(12,1)(10,1)

\rput(7,1.5){$\psi$}
\rput(7,0.5){$\cong$}

\rput(1,1){$T$}
\rput(3.1,1){$Y$}
\rput(2.3,0.7){$b$}
\rput(-0.4,0.7){$a$}

\rput(13.1,1){$Z$}
\rput(12.3,0.7){$b$}
\rput(9.6,0.7){$a$}
\rput(11,1){$T$}

\rput(2,-0.7){$D_1$}
\rput(12,-0.7){$D_2$}

\end{pspicture}
\caption{The isomorphic trees in Lemma \ref{tyuz}}
\label{tu2}
\end{figure}
Note that we may have $a=b$. Since $|V(Y)|=|V(Z)|$, Lemma \ref{3pos} implies that $\psi(Y) \subseteq T-b$, $\psi(Y) \subseteq Z-b$ or $\psi(b)=b$. It cannot be true that $\psi(Y) \subseteq Z-b$ as $Y$ is too large. We claim that $\psi(b)=b$ implies that $Y(b) \cong Z(b)$, proving the lemma directly. To see this, let $Y':=\rho_{T}(b)$. Then $\rho_{D_1}(b) = Y\cup Y'$ and the claim follows from
$$
\psi(Y\cup Y')=\psi(\rho_{D_1}(b)) = \rho_{D_2}(b) = Y'\cup Z.
$$
This also proves the lemma when $a=b$ since this implies $\psi(b)=b$.

We may therefore assume that $Y_2:=\psi(Y) \subseteq T-b$. Let $b_2:=\psi(b)$. Considering $Y_2$ as a subgraph of $D_1$ we may apply $\psi$ again and either $\psi(Y_2) \subseteq T-b$ or $\psi(b_2)=b$. Repeat this, with $Y_{i+1}:=\psi(Y_i)$ and $b_{i+1}=\psi(b_i)$ until  $b_{n+1}=b$ for some integer $n$. This integer $n$ must exist since the subgraphs $Y_2,\dots,Y_n$ are disjoint in $T-b$. (Otherwise, if $Y_i\cap Y_j \neq\{\}$ for $i,j$ satisfying $2\lqs i<j\lqs n$ then we may apply $\psi^{1-i}$ to get $Y\cap Y_{j-i+1} \neq\{\}$ which is not true.) It follows that $b,b_2,\dots,b_n$ are distinct vertices in $T$.

Let $Y':=\rho_{T}(b)$ as before so that $\rho_{D_1}(b) = Y\cup Y'$. Put $Y'_2:=\psi(Y')$ and we claim that $Y'_2 \subseteq T-b$. Since $b_2\in Y'_2$ and we know that $b_2 \in T-b$, the claim follows if we can show that $b\notin Y'_2$. We have
$$
b\in Y'_2 \implies \psi(b_n)\in \psi(Y') \implies b_n \in Y'.
$$
The distance from $a$ of every vertex in $Y'-b$ is greater than the distance from $a$ to $b$ which equals the distance from $a$ to $b_n$. Hence $b_n \in Y'$ implies $b_n=b$, a contradiction. This proves our claim.

Repeating this argument with $Y'_{i+1}:=\psi(Y'_i)$, we find $Y'_i \subseteq T-b$ for $2\lqs i\lqs n$ and
\begin{equation*}
  \rho_{D_1}(b_i) = Y_i \cup Y'_i \cong Y\cup Y' \qquad\text{for}\qquad 2\lqs i\lqs n.
\end{equation*}
Then
$$
 Y\cup Y' \cong \psi(Y_n\cup Y'_n) = \psi(\rho_{D_1}(b_n)) = \rho_{D_2}(b)=Z\cup Y'.
$$
Hence $Y\cup Y' \cong Z\cup Y'$ under an isomorphism that maps $b$ to $b$. This completes the proof.
\end{proof}

\section{Uniqueness of telescoping vertices} \label{unique}


\SpecialCoor
\psset{griddots=5,subgriddiv=0,gridlabels=0pt}
\psset{xunit=0.7cm, yunit=0.7cm, runit=0.7cm}
\psset{linewidth=1pt}
\psset{dotsize=5pt 0,dotstyle=*}
\begin{figure}[ht]
\centering
\begin{pspicture}(-3,-0.5)(13,3.5) 

\psset{arrowscale=2,arrowinset=0.5}

\pscircle(0.5,1.5){1.5}
\rput(0.5,1.8){odd}
\rput(0.5,1.2){cycle}
\rput(2.2,0.9){$A$}

\psline(4,2)(6,2)(6,3)(4,3)(4,2)
\psline(4,0)(6,0)(6,1)(4,1)(4,0)
\psline(6,2)(8,2)(8,3)(6,3)
\psline(6,0)(8,0)(8,1)(6,1)
\psline(4,2.5)(2,1.5)(4,0.5)

\psdots(2,1.5)(4,2.5)(4,0.5)(6,2.5)(6,0.5)

\rput(5,2.5){$T$}
\rput(3.6,2.7){$a$}
\rput(6.4,2.7){$b$}
\rput(5,0.5){$U$}
\rput(3.6,0.3){$c$}
\rput(6.4,0.3){$d$}
\rput(7.3,2.5){$X$}
\rput(7.3,0.5){$Z$}

\rput(11.5,1.5){$T \cong U$}

\end{pspicture}
\caption{The graph $G$ in Proposition \ref{onet}}
\label{telexz}
\end{figure}
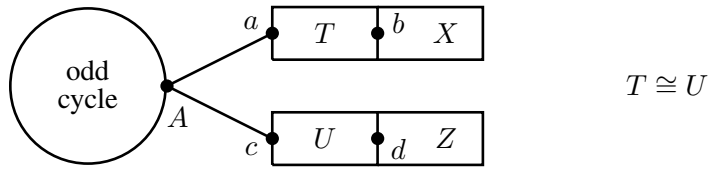

\begin{prop} \label{onet}
Suppose the tree $T$ with root $a$ is isomorphic to $U$ with root $c$. Let the tree $X$ be attached to $T$ at $b$ and the tree $Z$ attached to $U$ at the corresponding point $d$.
Connect an odd cycle containing a vertex $A$   to these trees by the edges $Aa$ and $Ac$.  Call this graph $G$, as seen in Figure \ref{telexz}, and assume that $G$ is reduced. Then any telescoping vertex of $G$  is in $X-b$ or $Z-d$.
\end{prop}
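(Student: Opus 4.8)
The plan is to argue by contradiction. Suppose $G$ has a telescoping vertex $v$ with $v\notin X-b$ and $v\notin Z-d$, so that $v$ lies in $T$ or in $U$. The statement is symmetric under interchanging the two sides $(T,X,a,b)\leftrightarrow(U,Z,c,d)$, an operation carrying the class of graphs under consideration to itself, so it suffices to treat the case $v\in T$; the case $v\in U$ follows by applying the result to the side-swapped graph. Write $T\cup X$ and $U\cup Z$ for the two rooted subtrees hanging from $A$ through $a$ and through $c$. Deleting $v\in T$ does not touch the $c$-branch, and since $G$ is reduced the $c$-branch admits no internal cancellation (any such cancellation would be a cancellation of $G$), so it is already reduced and its reduced form is $U\cup Z$ itself. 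Reducing the tree at $A$ from the bottom up, which is legitimate by the uniqueness of reduced forms (Lemma~\ref{red}), the cycle component collapses to the bare cycle precisely when the reduced $a$-branch is isomorphic to the reduced $c$-branch, so that they cancel at $A$. Thus $v$ is telescoping if and only if
$$
\operatorname{reduced}\bigl((T-\rho_{T\cup X}(v))\cup X\bigr)\ \cong\ U\cup Z
$$
as rooted trees (with roots $a$ and $c$), and the goal is to show this is impossible.

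I would prove impossibility by induction on $|V(T)|=|V(U)|$, at each stage examining the root $a$ and its children, with $a_1$ the child on the path $P(a,b)$. If $v=a$ the $a$-branch disconnects entirely, leaving the empty tree, which cannot match $U\cup Z$. If $v$ lies on $P(a,b)$ (in particular if $v=b$), then $\rho_{T\cup X}(v)$ contains $b$ and hence all of $X$, so the surviving $a$-branch is a proper subtree of $T$ with fewer than $|V(T)|\lqs|V(U\cup Z)|$ vertices; since reduction only decreases the vertex count, no isomorphism with $U\cup Z$ is possible. These two observations also supply the base cases.

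The remaining, genuinely delicate, case is when $v$ lies off the path $P(a,b)$. Reducing bottom up, every child subtree of $a$ except the one containing $v$ is untouched and already reduced (a subtree of the reduced $T\cup X$), while the modified child reduces to some rooted tree $\bar Q$. Since $U\cup Z$ has the full, cancellation-free list of children $\rho(a_1)\cup Z,\rho(a_2),\dots,\rho(a_k)$ at its root, an isomorphism forces the source to have exactly $k$ surviving children at $a$, hence no cancellation at $a$ and a matching of the two child-multisets. If $v$ lies in a child subtree $\rho(a_i)$ with $a_i\neq a_1$, cancelling the common children leaves the comparison $\{\rho(a_1)\cup X,\bar Q\}\cong\{\rho(a_1)\cup Z,\rho(a_i)\}$, giving two possibilities: either $\rho(a_1)\cup X\cong\rho(a_1)\cup Z$, which by Lemma~\ref{tyuz} (taking $\theta$ the identity on $\rho(a_1)$) yields $X(b)\cong Z(d)$ and hence the forbidden cancellation of $G$ at $A$; or $\rho(a_1)\cup X\cong\rho(a_i)$, which is a cancellation of the reduced tree $T\cup X$ at $a$ and so is equally forbidden. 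If instead $v$ lies in $\rho(a_1)$ (the same subtree as $b$) but off $P(a,b)$, the same bottom-up bookkeeping reduces the required isomorphism to $\operatorname{reduced}((\rho(a_1)-\rho_{T\cup X}(v))\cup X)\cong\rho(a_1)\cup Z$, which is exactly the forbidden isomorphism for the strictly smaller instance rooted at $a_1$, so the inductive hypothesis applies.

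The main obstacle I anticipate is this off-path analysis: one must argue carefully that matching the reduced $a$-branch against the untouched $U\cup Z$ forces a cancellation-free, child-by-child correspondence at the relevant root, localising the discrepancy to the single pair $X$ versus $Z$ (against the deleted subtree), and then that both ways of resolving this pair contradict either the reducedness of $G$ or Lemma~\ref{tyuz}. Arranging the induction so that the ``same subtree as $b$'' case falls back cleanly onto a smaller instance of the very same statement, and checking that the degenerate configurations (such as $b=a$ or a collapsing child subtree) are absorbed by the base cases, is where the bookkeeping must be handled with care.
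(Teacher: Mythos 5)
Your strategy is genuinely different from the paper's. You recast ``$v$ is telescoping'' as the single rooted-tree isomorphism $\operatorname{reduced}\bigl((T-\rho_{T\cup X}(v))\cup X\bigr)\cong U\cup Z$ (correct: the $c$-branch is untouched and already reduced, and a cancellation at $A$ must swap the two whole branches), and refute it by comparing child multisets at the root and recursing down $P(a,b)$. The paper instead isolates the last cancellation vertex $e$, writes $T=Y\cup W$, and iterates the resulting isomorphism $\theta:W\cup Y\cup Z\to W\cup X$ in the style of the proof of Lemma~\ref{tyuz} until it manufactures an explicit cancellation vertex $r$ of $T\cup X$. Your reduction of the $v\in U$ case to the $v\in T$ case by side-swapping symmetry, and your disposal of $v\in P(a,b)$ by a vertex count, are cleaner than the paper's corresponding steps. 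The steps you do spell out are sound: the child counts force a cancellation-free matching at $a$, and both resolutions of $\{\rho(a_1)\cup X,\bar Q\}\cong\{\rho(a_1)\cup Z,\rho(a_i)\}$ contradict reducedness, the first via Lemma~\ref{tyuz}.

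The genuine gap is the terminal case of your recursion in which the current root coincides with $b$. This occurs at the outset whenever $a=b$, and, more importantly, is produced by your own inductive step whenever $b\in P(a,v)$, i.e.\ whenever $v$ lies strictly beyond $b$ in $T$: you then descend along $P(a,b)$ until the root \emph{is} $b$. Contrary to your closing remark, this is not absorbed by your base cases ($v=a$ or $v\in P(a,b)$), since such a $v$ is neither. In this configuration there is no child $a_1$ on $P(a,b)$; the root's children split as ($T$-children of $b$) together with ($X$-children of $b$) on one side, versus ($U$-children of $d$) together with ($Z$-children of $d$) on the other, and ``cancel the common children and compare the leftover pair'' no longer applies, because the subtrees of $X$ hanging at $b$ and of $Z$ hanging at $d$ need not correspond at all. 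A separate argument is needed here --- one does exist: the child of $b$ containing $v$ strictly shrinks, so its mirror image among the children of $d$ must be matched either with an $X$-child of $b$ (impossible since $T\cup X$ is reduced at $b$) or with the shrunken tree (impossible by vertex count) --- but you have not supplied it. A smaller omission of the same kind: your recursion to the instance rooted at $a_1$ requires that smaller graph to be reduced, and ruling out the cancellation $\rho(a_1)\cup X\cong\rho(c_1)\cup Z$ at its cycle vertex is itself another application of Lemma~\ref{tyuz}.
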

\begin{proof}
Recall from Lemma \ref{can} that, if we remove a vertex $v$ from the tree part of $G$, any cancellations that are then possible  must occur at a sequence of distinct and uniquely defined vertices in $P(A,v)-v$ that each get closer to $A$.

Assume, without losing generality, that $|V(X)|\gqs |V(Z)|$. We first claim that any $v\in U$ cannot be telescoping. Since the cancellations can only occur at points in $U$ and at $A$, it is clear that a final cancellation at $A$ will not be possible as there are too many vertices in $T\cup X$ to cancel those left from $U\cup Z$. This proves the claim.

Now we suppose that $v\in T$ is telescoping. Let $e$ be the last vertex in $T$ where cancellation occurs, or the remaining vertex adjacent to $v$ if there is no cancellation in $T$. Then we may write $T=Y\cup W$ with $v\in Y$ and $Y\cap W=e$; see the left of Figure \ref{wyz}. After removing $v$ and cancelling, we are left with $W$ from $T$ and, since $v$ is telescoping, there is one further cancellation  at $A$. Note that $P(a,b)$ must be contained in $W$ or else there will be too few vertices to cancel with $U\cup Z$ at $A$. Since $U\cong T$, the following arguments become clearer if we relabel $U$ as $T$  and the vertices $c,d$ as $a,b$ respectively. The last cancellation at $A$ requires an isomorphism $\theta$ from $D_1:=T\cup Z$ to $D_2$, which is what remains of $T\cup X$, that fixes $a$. All of $X$ will remain after the cancellation at $e$ except possibly if $e=b$, so we examine this case first.
\begin{description}
  \item[The case when $e=b$.] Suppose the cancellation at $b$ removes $Y-b$ from $T$ as well as $Y'-b$ from $X$. (If $Y'=b$ then the cancellation at $b$ does not affect $X$.) Write $X=Y'\cup X'$ with $Y'\cap X'=b$. So after the cancellation at $b$, $T\cup X$ becomes $W\cup X'$. Therefore we may write our isomorphism as $\theta:W\cup Y\cup Z \to W\cup X'$. By Lemma \ref{tyuz}, $Y\cup Z$ and $X'$ are isomorphic as trees rooted at $b$. This shows that there are two copies of $Y$ attached to $b$ in $T\cup X$ that may be cancelled. This contradicts $G$ being reduced.
\end{description}

We may assume for the remainder that $e\neq b$.
If $a=b$ then we have $\theta(b)=b$. As a consequence of this,  $X$ and $Z$ are isomorphic as trees rooted at $b$ which contradicts $G$ being reduced. Hence we may also assume that $a\neq b$.
It is possible that $a=e$.

\SpecialCoor
\psset{griddots=5,subgriddiv=0,gridlabels=0pt}
\psset{xunit=0.7cm, yunit=0.7cm, runit=0.7cm}
\psset{linewidth=1pt}
\psset{dotsize=5pt 0,dotstyle=*}
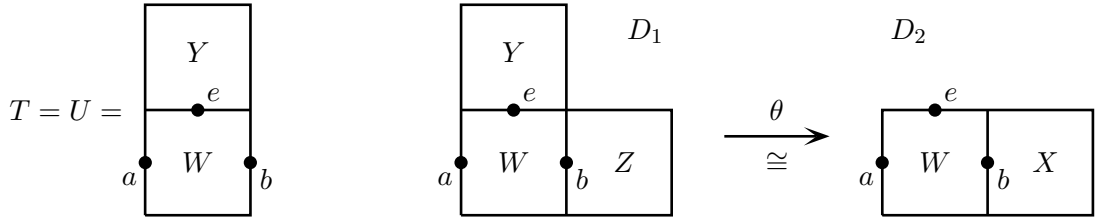
\begin{figure}[ht]
\centering
\begin{pspicture}(-3,-0.5)(18,4.5) 

\psset{arrowscale=2,arrowinset=0.5}

\psline(0,0)(2,0)(2,4)(0,4)(0,0)
\psline(0,2)(2,2)
\psline(6,0)(10,0)(10,2)(6,2)
\psline(6,0)(6,4)(8,4)(8,0)
\psline(14,0)(18,0)(18,2)(14,2)(14,0)
\psline(16,0)(16,2)

\psline{->}(11,1.5)(13,1.5)

\psdots(0,1)(1,2)(2,1)
\psdots(6,1)(7,2)(8,1)
\psdots(14,1)(15,2)(16,1)

\rput(-1.5,2){$T=U=$}
\rput(12,2){$\theta$}
\rput(12,1){$\cong$}

\rput(1,1){$W$}
\rput(1,3.1){$Y$}
\rput(1.3,2.3){$e$}
\rput(-0.3,0.7){$a$}
\rput(2.3,0.7){$b$}

\rput(7,1){$W$}
\rput(7,3.1){$Y$}
\rput(7.3,2.3){$e$}
\rput(5.7,0.7){$a$}
\rput(8.3,0.7){$b$}
\rput(9.1,1){$Z$}

\rput(15,1){$W$}
\rput(15.3,2.3){$e$}
\rput(13.7,0.7){$a$}
\rput(16.3,0.7){$b$}
\rput(17.1,1){$X$}

\rput(9.5,3.5){$D_1$}
\rput(14.5,3.5){$D_2$}

\end{pspicture}
\caption{Examining cancellation in  $G$ for Proposition \ref{onet}}
\label{wyz}
\end{figure}

The isomorphism $\theta$ from $D_1:=W\cup Y\cup Z$ to $D_2:=W\cup X$ is shown in Figure \ref{wyz}.
Recall that $T\cup X=W\cup Y\cup X$. Roughly, our goal is to show that the existence of $\theta$ means that $X$ contains a copy of $Y$, implying that $T\cup X$ has cancellation. This contradicts our assumption that $G$ was reduced and rules out $v\in T$ being a telescoping vertex. In more detail, we seek adjacent vertices $x,r\in W$ so that $\theta(x)\neq x$ and $\theta(r)=r$. Set  $T^*  :=\rho_{D_1}(x)$ which implies $\theta(T^*)  =\rho_{D_2}(\theta(x))$. We also require
\begin{align}\label{tst}
  \rho_{D_1}(x)& \subseteq Y\cup(W-b),  \\
  \rho_{D_2}(\theta(x)) &\subseteq X\cup(W-e). \label{tst2}
\end{align}
Then \eqref{tst} and \eqref{tst2} imply that $T^*=\rho_{T\cup X}(x)$ and $\theta(T^*)=\rho_{T\cup X}(\theta(x))$. If
\begin{equation}\label{empt}
  T^*\cap \theta(T^*)=\{\}
\end{equation}
then it follows that we have cancellation at $r$ in $T\cup X$, giving our desired contradiction. In fact it is easy to see that \eqref{empt} holds since otherwise there would be more than one path between $x$ and $\theta(x)$. If $e\in T^*$ then  \eqref{empt} implies \eqref{tst2}.

By Lemma \ref{3pos} we know that $\theta(Z)\subseteq W-b$, $\theta(Z)\subseteq X-b$ or $\theta(b)=b$. As we saw earlier, $\theta(b)=b$ may be ruled out. If $\theta(Z)\subseteq X-b$ then $\theta(b)\in X-b$ and so the distance from $a$ to $b$ must increase under $\theta$ which is not possible. Therefore  $\theta(Z)\subseteq W-b$ and we set $b_2:=\theta(b)$.
We may consider $Z_2 \subseteq W-b$ as a subgraph of $D_1$ and apply $\theta$ again.
As in the proof of Lemma \ref{tyuz}, there exists $n\gqs 2$ so that we obtain $Z_i \subseteq W-b$ for $2\lqs i\lqs n$ where $Z_{i+1}:=\theta(Z_i)$, $b_{i+1}=\theta(b_i)$ and $b_{n+1}=b$. We have  that $Z_1:=Z,$ $Z_2, \dots , Z_n$ are all disjoint as subgraphs of $D_1$ and $b_1:=b,b_2,\dots,b_n$ are all distinct in $W$.

Let $Z'=Z'_1:=\rho_{W\cup Y}(b)$ so that
$$
\rho_{D_1}(b)=Z\cup Z'.
$$
 As in the proof of Lemma \ref{tyuz} for $Y'_i$, we have $Z'_i \subseteq W-b$ for $2\lqs i\lqs n$ where $Z'_{i+1}:=\theta(Z'_i)$.
 Suppose $e$ is never a vertex of $Z_i\cup Z'_i$ for $1\lqs i\lqs n$. Then
\begin{equation*} 
  \rho_{D_1}(b_i) = Z_i\cup Z'_i \cong Z\cup Z' \quad \text{for}\quad 1\lqs i\lqs n.
\end{equation*}
 This being the case, we have
\begin{equation*}
    X\cup Z' \cong \rho_{D_2}(b)= \theta( \rho_{D_1}(b_n)) \cong Z\cup Z'
\end{equation*}
and so $X \cong Z$ as trees rooted at $b$.
However this contradicts $G$ being reduced and
so we must have
 $e\in Z_k\cup Z'_k$ for some $k$ in the range $1\lqs k\lqs n$.
\begin{description}
  \item[The case when $e\in Z_1\cup Z'_1$.] Clearly, $e\notin Z_1$ so assume that $e\in Z'_1$.
Let $R:=\rho_{W}(b)$ so that $Z'_1=R\cup Y$ with $R\cap Y=e$. By applying $\theta$ $n$ times to $\rho_{D_1}(b)$ we obtain
\begin{equation} \label{arpp}
  R\cup Y\cup Z \cong R\cup X
\end{equation}
as trees rooted at $b$. Then $P(b,e)\subseteq R$ and we let $x$ be the vertex adjacent to $b$ in $P(b,e)$. Put $R_1:=bx \cup \rho_W(x) \subseteq R$. This makes $R_1$ the tree in $W$ containing $e$ that is attached to  $b$ by a single edge. A short argument using \eqref{arpp} shows that $X$ must contain a copy of $R_1\cup Y$. In other words $X=R_1\cup Y \cup X_1$ with $(R_1\cup Y)\cap X_1=b$. We have $\rho_{T\cup X}(b)=R\cup Y\cup X$ which means there are two copies of $R_1\cup Y$ attached to $b$ in $T\cup X$. So in this case we have cancellation at $b$ in $G$, contradicting our assumption that $G$ was reduced.
\end{description}

It remains to suppose that   $e\in Z_k\cup Z'_k$ only for  $k$ in the range $2\lqs k\lqs n$. We choose the minimal such $k$. Then
\begin{equation*} 
  \rho_{D_1}(b_k) = Y \cup Z_k\cup Z'_k.
\end{equation*}
 The distinct vertices $b_1,\dots, b_n$ are  in $W$ and so  $P(a,b_1),\dots, P(a,b_n)$ are in $W$ with $$
\theta(P(a,b_i))=P(a,b_{i+1}) \quad \text{for}\quad 1\lqs i\lqs n.
$$
Write $P(a,b_1) \cap P(a,b_2)$ as $P(a,r)$ so that $\theta(r)=r$. Then $P(a,r) \subseteq  P(a,b_i)$ for $1 \lqs i\lqs n$.
Now let $x$ be the vertex in $P(r,b_k)$ adjacent to $r$. Put $T^*:=\rho_{D_1}(x)$ and we claim that $T^* \subseteq Y\cup(W-b)$. If $b\in T^*$ then there is a path from $b_k$ to $b$ in $T^*$ that  does not pass through $r$. But this is not possible as the path $P(b_k,r)\cup P(r,b)$ does pass through $r$. Since $T^*$ is connected and contains a vertex $b_k$ in $W-b$, we have proved the claim and so \eqref{tst} holds. By construction we have $e\in T^*$, giving \eqref{tst2} by \eqref{empt}. As discussed there, these conditions prove that $G$ has cancellation at $r$ giving a contradiction that implies $v\in T$ cannot be a telescoping vertex.
\end{proof}

Proposition \ref{onet} is next extended to cases with more cancellation.

\SpecialCoor
\psset{griddots=5,subgriddiv=0,gridlabels=0pt}
\psset{xunit=0.7cm, yunit=0.7cm, runit=0.7cm}
\psset{linewidth=1pt}
\psset{dotsize=5pt 0,dotstyle=*}
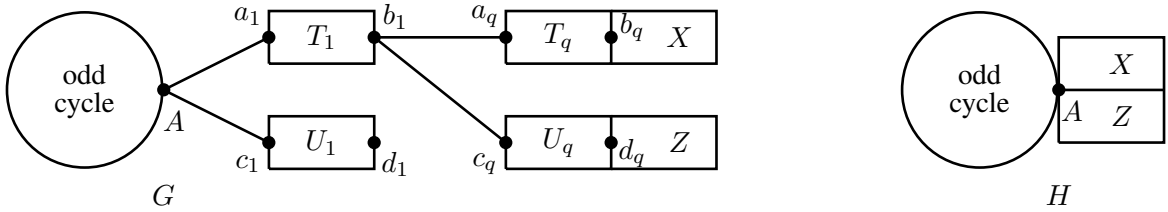
\begin{figure}[ht]
\centering
\begin{pspicture}(-1,-0.5)(21,3.5) 

\psset{arrowscale=2,arrowinset=0.5}

\pscircle(0.5,1.5){1.5}
\rput(0.5,1.8){odd}
\rput(0.5,1.2){cycle}
\rput(2.2,0.9){$A$}

\psline(4,2)(6,2)(6,3)(4,3)(4,2)
\psline(4,0)(6,0)(6,1)(4,1)(4,0)
\psline(8.5,2)(10.5,2)(10.5,3)(8.5,3)(8.5,2)
\psline(8.5,0)(10.5,0)(10.5,1)(8.5,1)(8.5,0)
\psline(10.5,2)(12.5,2)(12.5,3)(10.5,3)
\psline(10.5,0)(12.5,0)(12.5,1)(10.5,1)

\psline(4,2.5)(2,1.5)(4,0.5)
\psline(8.5,2.5)(6,2.5)(8.5,0.5)

\psdots(2,1.5)(4,2.5)(4,0.5)(6,2.5)(6,0.5)(8.5,2.5)(8.5,0.5)(10.5,2.5)(10.5,0.5)

\rput(5,2.5){$T_1$}
\rput(3.6,2.9){$a_1$}
\rput(6.4,2.9){$b_1$}
\rput(5,0.5){$U_1$}
\rput(3.6,0.1){$c_1$}
\rput(6.4,0.1){$d_1$}

\rput(9.5,2.5){$T_q$}
\rput(8.1,2.9){$a_q$}
\rput(10.9,2.7){$b_q$}
\rput(9.5,0.5){$U_q$}
\rput(8.1,0.1){$c_q$}
\rput(10.9,0.3){$d_q$}

\rput(11.8,2.5){$X$}
\rput(11.8,0.5){$Z$}


\rput(20.2,2){$X$}
\rput(20.2,1){$Z$}

\pscircle(17.5,1.5){1.5}
\rput(17.5,1.8){odd}
\rput(17.5,1.2){cycle}
\rput(19.25,1.1){$A$}
\rput(19,-0.5){$H$}
\rput(2,-0.5){$G$}

\psline(19,0.5)(21,0.5)(21,2.5)(19,2.5)(19,0.5)
\psline(19,1.5)(21,1.5)

\psdots(19,1.5)

\end{pspicture}
\caption{The graphs for Proposition \ref{onet2}}
\label{more}
\end{figure}

\begin{prop} \label{onet2}
For $i=1,2,\dots,q$, let the trees $T_i$ and $U_i$ be isomorphic with vertices $a_i,b_i\in T_i$ corresponding to $c_i,d_i\in U_i$ respectively. Include edges $b_ia_{i+1}$ and $b_ic_{i+1}$ for $i=1,2,\dots,q-1$. Let an odd cycle containing the vertex $A$ be connected with edges $Aa_1$ and $Ac_1$. Attach the tree $X$ at $b_q$ and the tree $Z$ at $d_q$. The left of Figure \ref{more} shows the $q=2$ case of this construction. Call this graph $G$ and assume it  is reduced with a telescoping vertex $t$.

Then the following are true:
\begin{enumerate}
  \item The vertex $t$ is in $X-b_q$ or $Z-d_q$.
  \item Make a new graph $H$ from $G$ by removing all the trees $T_i,U_i$ and identifying the vertices $A,b_q,d_q$ as displayed on the right of Figure \ref{more}. Then $t$ is a telescoping vertex for $H$.
\end{enumerate}
\end{prop}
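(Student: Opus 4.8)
The plan is to induct on $q$, peeling off the innermost pair of trees $T_1,U_1$ at each step. The base case $q=1$ is Proposition~\ref{onet}, which gives part~(i); part~(ii) for $q=1$ will drop out of the collapse analysis described below. So assume $q\gqs 2$ and that both parts hold for all smaller values of $q$.

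First I would locate $t$ coarsely. Regard the innermost layer as an instance of Proposition~\ref{onet}: take the odd cycle at $A$, the isomorphic trees $T_1$ (root $a_1$) and $U_1$ (root $c_1$), let the entire part of $G$ lying beyond $b_1$ play the role of the tree attached at $b_1$, and let the (empty) part beyond $d_1$ play the role of the tree attached at $d_1$. Since $G$ is reduced and the $b_1$-side is strictly larger than the trivial $d_1$-side, Proposition~\ref{onet} forces $t$ to lie strictly beyond $b_1$, hence outside $T_1\cup U_1$. Now form the $(q-1)$-level graph $\tilde G$ by deleting the cycle together with $T_1$ and $U_1$ and attaching a fresh odd cycle at $b_1$. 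Because $Z$ hangs at $d_q$ with $q\gqs 2$, it lies in the part beyond $b_1$ and survives, so $\tilde G$ is exactly a $(q-1)$-instance of the present construction with $b_1$ in the role of $A$, and the reducedness of $G$ shows at once that $\tilde G$ is reduced. Moreover the graph $H$ built from $\tilde G$ is isomorphic to the graph $H$ built from $G$, since both collapse everything between the cycle and the pair $X,Z$ to a single vertex. It therefore suffices to prove that $t$ is telescoping in $\tilde G$, for then the inductive hypothesis applied to $\tilde G$ delivers (i) and (ii) for $G$ at once.

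To see that $t$ is telescoping in $\tilde G$ I would track the cancellations triggered by deleting $t$. By Lemma~\ref{can} these occur at a sequence of vertices of $P(A,t)$ that moves strictly toward $A$, each one removing only vertices on its far side from $A$. Every vertex of $P(A,t)$ that is at least as far from $A$ as $b_1$, together with $b_1$ itself, has the same far-side neighbourhood in $G$ and in $\tilde G$; hence the run of cancellations at these vertices is identical in the two graphs and reduces the part beyond $b_1$ to one and the same rooted graph $B^{*}$ attached at $b_1$ (well defined by Lemma~\ref{red}). Granting the confinement claim below, no cancellation then takes place strictly inside $T_1$, so that when the reduction of $G$ reaches its terminal cancellation at $A$ we are comparing $T_1\cup B^{*}$, rooted at $a_1$, with the bare branch $U_1$, rooted at $c_1$. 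As $U_1\cong T_1$, this cancellation is possible only if $B^{*}$ is the single vertex $b_1$, which is precisely the statement that the part beyond $b_1$ collapses onto the new cycle, i.e. that $t$ is telescoping in $\tilde G$.

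The main obstacle is the confinement step just invoked, for a priori the reduction of $G$ could, after exhausting the cancellations out to $b_1$, continue at an interior vertex of $P(a_1,b_1)$ and there cancel a subtree of $T_1$ against $T_1$-material mixed with a nontrivial residue of $B^{*}$ — a cancellation with no counterpart in $\tilde G$ that could, in principle, let $G$ collapse to $A$ while $B^{*}$ remains nontrivial. I would exclude this exactly as in the proof of Proposition~\ref{onet}: such a cancellation, combined with the isomorphism that the final collapse forces at $A$ and the rooted-tree comparisons of Lemmas~\ref{3pos} and~\ref{tyuz}, produces two isomorphic subtrees hanging from a common vertex of $G$, contradicting that $G$ is reduced. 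Once mixing is ruled out, the subtrees hanging inside $T_1$ are purely $T_1$-internal and (again because $G$ is reduced) admit no cancellation, so the reduction cannot pass $b_1$ until $B^{*}=\{b_1\}$. The same confinement argument, applied in the base case $q=1$ where the branch at $A$ is instead $U_1\cup Z$, reduces the terminal isomorphism to one between $T_1\cup(X-t)^{\mathrm{red}}$ and $U_1\cup Z$ fixing the cycle; since $T_1\cong U_1$ with $b_1\mapsto d_1$, Lemma~\ref{tyuz} then yields $(X-t)^{\mathrm{red}}\cong Z$ as rooted trees, which is exactly the assertion that $t$ is telescoping in $H$ and so settles part~(ii) for $q=1$.
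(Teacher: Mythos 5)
Your overall architecture coincides with the paper's: induct on $q$, use Proposition \ref{onet} on the innermost layer to push $t$ past $T_1\cup U_1$, pass to the $(q-1)$-level graph obtained by deleting $T_1,U_1$ and promoting $b_1$ to the cycle vertex, and finish part (ii) with Lemma \ref{tyuz} applied to the isomorphism forced at the last cancellation before $b_{q-1}$ (your reduction of part (ii) to the $q=1$ case via $H(\tilde G)\cong H(G)$ is only a cosmetic reorganization of the same idea). The inductive step is sound provided the ``confinement'' claim holds, and you are right that this is the crux.

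That confinement step is, however, the one place where your proposal has a real gap. You propose to rule out a cancellation at an interior vertex $w$ of $P(a_1,b_1)$ ``exactly as in the proof of Proposition \ref{onet},'' claiming it would ``produce two isomorphic subtrees hanging from a common vertex of $G$, contradicting that $G$ is reduced.'' But the offending cancellation lives in the graph obtained \emph{after} deleting $t$ and performing the earlier cancellations, where the branch through $b_1$ has already been pruned to $B^*$; the two subtrees it matches at $w$ need not be isomorphic in $G$ itself, so no contradiction with reducedness of $G$ falls out directly, and it is not clear the Lemma \ref{3pos}/\ref{tyuz} machinery transplants as stated. The paper closes this step with a much simpler count, which you should adopt: by Lemma \ref{can} the offending cancellation at $w\in P(a_1,b_1)-b_1$ removes the entire subtree at $w$ containing $b_1$ (hence $b_1$ and all of $B^*$) together with an isomorphic copy lying inside $T_1$, so what survives of the $a_1$-branch is a proper subtree of $T_1$ with at most $|V(T_1)|-2$ vertices and can never match $U_1$ at the terminal cancellation at $A$; symmetrically, if the last cancellation before $A$ occurs strictly beyond $b_1$ then at least one vertex beyond $b_1$ survives and the branch is too large. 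This forces the last cancellation before $A$ to be exactly at $b_1$, which is your $B^*=\{b_1\}$ conclusion. The same counting (not the Proposition \ref{onet} machinery) is what you need again in your $q=1$ treatment of part (ii) to keep the cancellations confined to $X-b_1$ before invoking Lemma \ref{tyuz}.
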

\begin{proof}
We use induction on $q$ to prove (i). The $q=1$ case is covered by Proposition \ref{onet} so assume $q\gqs 2$.  Proposition \ref{onet} implies that $t$ is not in $T_1$ or $U_1$. Suppose that the first cancellation after $t$ is removed happens at $s_m$, the next at $s_{m-1}$ and the last at $s_0=A$. By Lemma \ref{can}, we know that every $s_i$ is in $P(A,t)-t$. The last cancellation before $A$ is at $s_1$. We cannot have $s_1$ in $T_1-b_1$ as there will be too few vertices remaining to cancel with $U_1$ at $A$. If $s_1$ is not in $T_1$ then there will be too many vertices remaining to cancel with $U_1$ at $A$. Therefore $s_1=b_1$. Let $G^*$ be $G$ with $T_1,U_1$ removed and $b_1$ identified with $A$. Induction now shows that $t$ is in $X-b_q$ or $Z-d_q$.

We next show (ii). Repeating the above argument that $s_1=b_1$ also shows that $s_2=b_2, \dots, s_{q-1}=b_{q-1}$. Assume  $|V(X)|\gqs |V(Z)|$ and, as at the beginning of the proof of Proposition \ref{onet}, this implies that $t\in X-b_q$ and also that the cancellation before $s_{q-1}=b_{q-1}$ (or before $A$ if $q=1$) occurs at vertices in $X-b_q$. Suppose the last cancellation before $b_{q-1}$ is at the vertex $s\in X-b_q$. We may write $X=Y\cup X'$ for $Y\cap X'=s$ so that the cancellation at $s$ removes $X'-s$. To cancel at $b_{q-1}$ we must have $U_q \cup Z$ isomorphic to $T_q\cup Y$ as trees rooted at $c_q$ and $a_q$ respectively. Then Lemma \ref{tyuz} implies that $Y$ rooted at $b_q$ is isomorphic to $Z$ rooted at $d_q$. It follows that $t$ is a telescoping vertex for $H$.
\end{proof}

We will need Proposition \ref{onet2} to prove Proposition \ref{ned} in the next section. It also allows us to prove the goal of this section:

\begin{cor} \label{onetel}
Let $G$ be a reduced graph consisting of a tree attached to an odd cycle at one vertex. Then $G$ has at most one telescoping vertex.
\end{cor}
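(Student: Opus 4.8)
The plan is to induct on the number of vertices of $G$, using Proposition \ref{onet2} to pass to a strictly smaller reduced graph of the same type. If $G$ has no telescoping vertex there is nothing to prove, so suppose it has one. By Lemma \ref{teleprops}(ii) and the remark following it, every telescoping vertex lies in the single layer $S_d$, and by Lemma \ref{teleprops}(i) we have $\deg A \in \{3,4\}$ (a telescoping vertex forces $\deg A \lqs 4$, while $\deg A \gqs 3$ since a nontrivial tree is attached at $A$). The two cases $\deg A = 3$ and $\deg A = 4$ are handled separately.

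First I would dispose of the case $\deg A = 3$. Here Lemma \ref{teleprops}(i) gives $d=1$, so by the common-distance observation every telescoping vertex is a neighbour of $A$ inside the tree. But $\deg A = 3$ means $A$ has exactly one tree-edge, so $S_1$ is a single vertex and $G$ has at most one telescoping vertex.

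The substantial case is $\deg A = 4$, where $d \gqs 2$ and $G$ has the structure of Figures \ref{tele} and \ref{more}; that is, the hypotheses of Proposition \ref{onet2} hold with $q \gqs 1$, with parallel chains $T_1 \cong U_1, \dots, T_q \cong U_q$ off $A$ carrying trees $X$ at $b_q$ and $Z$ at $d_q$, and the chosen telescoping vertex $t$ in, say, $X$. The telescoping mechanism forces the reduced form of $X$ after deleting $t$ to be isomorphic, as a rooted tree at $b_q$, to $Z$; hence $|V(Z)| \lqs |V(X)|-1 < |V(X)|$. In particular no vertex of $Z$ can be telescoping, as that would require a reduced form of $Z$ minus a vertex to be isomorphic to $X$, violating the strict inequality. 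Thus every telescoping vertex of $G$ lies in $X-b_q$, and by Proposition \ref{onet2}(ii) each is a telescoping vertex of the strictly smaller graph $H$ obtained by deleting the $T_i,U_i$ and identifying $A,b_q,d_q$ into one vertex $A_H$.

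It remains to apply the inductive hypothesis to $H$, and here lies the main obstacle: $H$ need not be reduced. No cancellation can occur inside $X$ or inside $Z$ (that would contradict $G$ being reduced), but two branches $X^{(i)} \cong Z^{(j)}$ hanging at $A_H$ could cancel. I would show that no telescoping vertex of $H$ sits in a branch $X^{(i)}$ that cancels this way: if the branch $X^{(i)}$ containing a telescoping vertex $s$ were isomorphic to some $Z^{(j)}$, the automorphism $\tau$ of $H$ interchanging these two branches and fixing the cycle and $A_H$ would, since automorphisms preserve the telescoping property, carry $s$ to a telescoping vertex of $H$ inside $Z$ --- contradicting the size argument of the previous paragraph. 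Consequently, reducing $H$ to its reduced form $H'$ (well defined by Lemma \ref{red}, and obtained by deleting only such cancelling branch-pairs) removes no telescoping vertex, and each surviving telescoping vertex remains telescoping in $H'$. Since $H'$ is reduced with strictly fewer vertices than $G$, the inductive hypothesis gives it at most one telescoping vertex, whence $G$ has at most one as well. I expect the genuinely delicate step to be confirming that passing to $H'$ transfers the telescoping property intact --- i.e. that deleting $t$ commutes suitably with the branch-cancellations at $A_H$ --- which is exactly where Lemma \ref{red} and the symmetry lemma do the work.
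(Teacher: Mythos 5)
Your argument tracks the paper's for its first half: fix a telescoping vertex, invoke the structure of Figure \ref{tele}, and use Proposition \ref{onet2}(i) to confine every telescoping vertex to $X-b_q$. But at that point you miss the immediate finish and instead set up an induction whose key step is flawed. In the structure forced by a telescoping vertex, $Z$ is the \emph{single vertex} $d_q$ and $X=b_qa_{q+1}\cup T_{q+1}$, so $Z-d_q$ is empty and Proposition \ref{onet2}(i) already places every telescoping vertex in $T_{q+1}$. Combine this with your own opening observation (Lemma \ref{teleprops}(ii)) that all telescoping vertices lie in one layer $S_d$, and with the fact that every vertex of $T_{q+1}$ other than its root $a_{q+1}$ is strictly farther than $d$ from $A$: uniqueness follows at once. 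That is the paper's entire proof --- no induction, no auxiliary graph $H$.

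The genuine gap is in your handling of a non-reduced $H$. The swap automorphism produces a telescoping vertex \emph{of $H$} inside $Z$, but your size argument only excludes telescoping vertices \emph{of $G$} from $Z$; Proposition \ref{onet2}(ii) transfers the telescoping property from $G$ to $H$, not conversely, so ``telescoping for $H$ inside $Z$'' is not yet a contradiction, and making it one requires a separate analysis of how $H$ (which has no chains $T_i,U_i$) can reduce to the cycle. Likewise, the claim that telescoping vertices of $H$ survive as telescoping vertices of the reduced form $H'$ is asserted rather than proved. Both points could probably be repaired, but the situation you are guarding against never arises: with $Z=\{d_q\}$, the graph $H$ is the cycle with the single tree $T_{q+1}$ attached by one edge at $A$, which is automatically reduced (the identified vertex has degree $3$, so no cancellation is possible there, and any cancellation inside $T_{q+1}$ would already have been a cancellation in $G$). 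So even your induction, once restricted to the structure that actually occurs, closes without the problematic step --- and the direct combination of the two facts you already had makes the induction unnecessary.
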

\begin{proof}
Suppose $G$ has at least one telescoping vertex. We may describe $G$ as in Figure \ref{tele} and its related discussion, with telescoping vertex $a_{q+1}$. By Proposition \ref{onet2} part (i), applied with $X=b_qa_{q+1}\cup T_{q+1}$ and $Z=d_q$, any telescoping vertex  $t$ of $G$ must be in $T_{q+1}$. Lemma \ref{teleprops} part (ii) implies that $t$ must be the same distance from $A$ as $a_{q+1}$. Hence $t=a_{q+1}$.
\end{proof}

\section{Nim-values of  graphs with one odd cycle} \label{1cyc}

The following result is Theorem 4 of \cite{Y}. We reproduce their proof in a slightly shorter form.

\SpecialCoor
\psset{griddots=5,subgriddiv=0,gridlabels=0pt}
\psset{xunit=0.7cm, yunit=0.7cm, runit=0.7cm}
\psset{linewidth=1pt}
\psset{dotsize=5pt 0,dotstyle=*}
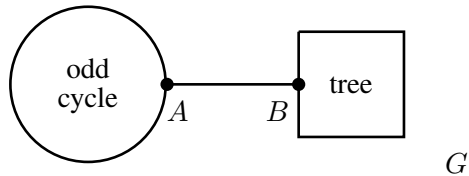
\begin{figure}[ht]
\centering
\begin{pspicture}(5,-4.5)(13.5,-0.6) 

\psset{arrowscale=2,arrowinset=0.5}

\pscircle(6.5,-2.5){1.5}
\psline(10.5,-1.5)(12.5,-1.5)(12.5,-3.5)(10.5,-3.5)(10.5,-1.5)
\psline(8,-2.5)(10.5,-2.5)
\psdots(8,-2.5)(10.5,-2.5)

\rput(11.5,-2.5){tree}
\rput(10.1,-3){$B$}

\rput(6.5,-2.2){odd}
\rput(6.5,-2.8){cycle}
\rput(8.2,-3){$A$}
\rput(13.5,-4){$G$}

\end{pspicture}
\caption{A tree attached to an odd cycle}
\label{fab}
\end{figure}
\begin{theorem} \label{AB}
Let $G$ be an odd cycle connected to a tree by an edge $AB$ as shown in Figure \ref{fab}. Then
\begin{equation*}
  \n(G)= \left\{
           \begin{array}{ll}
             4 \text{ or more} & \hbox{if $B$ has odd degree;} \\
             \f(G) & \hbox{otherwise.}
           \end{array}
         \right.
\end{equation*}
\end{theorem}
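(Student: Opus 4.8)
The plan is to reduce the whole computation to a single parity question and then settle it by induction. First I record the structural facts. The graph $G$ is unicyclic, so $|V(G)|=|E(G)|$ and hence $\f(G)=3\cdot|V(G)|_{(2)}$; thus $\f(G)=0$ if $|V(G)|$ is even and $\f(G)=3$ if $|V(G)|$ is odd, so in all cases $\f(G)\in\{0,3\}$. Writing $n$ for the (odd) length of the cycle and $t$ for the number of tree vertices, we have $|V(G)|=n+t$ and so $|V(G)|_{(2)}=(t-1)_{(2)}$.

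Next I would compute the nim-values of every move that touches the cycle. Deleting a cycle edge or any cycle vertex destroys the unique cycle and leaves a forest; this is bipartite, so by Proposition \ref{par} its nim-value equals its $\f$-value. An edge deletion keeps $|V|$ and lowers $|E|$ by $1$, so the result differs from $\f(G)$ in the edge-parity bit; a vertex deletion lowers $|V|$ by $1$, so the result differs from $\f(G)$ already in the vertex-parity bit. Either way the value is $\neq\f(G)$. Deleting the bridge $AB$ leaves the odd cycle (nim-value $0$) disjoint from the tree $T$, with nim-value $\n(T)\in\{1,2\}$, again $\neq\f(G)$. A quick check of the two parities of $|V(G)|$ shows these cycle moves realize exactly the three values $\{0,1,2,3\}\setminus\{\f(G)\}$. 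Hence by \eqref{mex} we have $\n(G)=\f(G)$ if no move realizes $\f(G)$, and $\n(G)\gqs 4$ as soon as one move does; moreover any move realizing $\f(G)$ must delete a tree edge or tree vertex.

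So everything comes down to proving that a tree move realizes $\f(G)$ exactly when $\deg_G(B)$ is odd. For the odd case I exhibit one move, and this needs no induction: deleting $B$ leaves the odd cycle disjoint from the forest $T-B$, which has $t-1$ vertices and $k:=\deg_T(B)$ components, so its nim-value is $\f(T-B)=(t-1)_{(2)}+2(t-1-k)_{(2)}$. Comparing with $\f(G)=(t-1)_{(2)}+2(t-1)_{(2)}$, the two agree iff $k$ is even, i.e. iff $\deg_G(B)=k+1$ is odd; with the previous paragraph this gives $\n(G)\gqs 4$. For the converse I use strong induction on $|V(G)|$ to show that if $\deg_G(B)$ is even then no tree move realizes $\f(G)$. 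Let $C_1,\dots,C_k$ be the tree-neighbours of $B$ and $T_i$ the subtree rooted at $C_i$; now $k$ is odd. Deleting $B$ gives $\n(T-B)\neq\f(G)$ by the computation just made. Deleting the edge $BC_i$ or the vertex $C_i$ lowers $\deg(B)$ to an odd value, so the remaining cycle-component $G'$ has $\n(G')\gqs 4$ by the odd case; since the detached part is a forest (a nim-sum of tree values, each in $\{1,2\}$, hence at most $3$), nim-adding it cannot clear the bit of weight $\gqs 4$ and the total stays $\gqs 4>\f(G)$. Deleting an edge strictly inside some $T_i$ leaves $\deg_{G'}(B)$ even, so $\n(G')=\f(G')\in\{0,3\}$ by induction, and nim-adding the single detached tree (value in $\{1,2\}$) gives a value in $\{1,2\}$.

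The remaining, delicate case is deleting a tree vertex $w$ strictly inside some $T_i$ (so $w\neq B$ and $w\neq C_i$); this is the step I expect to be the main obstacle. Here $\deg_{G'}(B)$ stays even, so $\n(G')=\f(G')$ by induction, but the detached forest $F'$ may have any nim-value in $\{0,1,2,3\}$, so a crude size bound no longer separates the nim-sum from $\f(G)$. The fix is a low-bit computation. With $v'=|V(G')|$ and $v_F=|V(F')|$ we have $|V(G)|=v'+v_F+1$, so the low bit of $\f(G)=3\cdot|V(G)|_{(2)}$ is $v'_{(2)}\oplus(v_F)_{(2)}\oplus 1$, whereas the low bit of $\n(G')\oplus\n(F')=\f(G')\oplus\f(F')$ is $v'_{(2)}\oplus(v_F)_{(2)}$ (using $\n(F')=\f(F')$ for the bipartite forest). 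These differ, so this move also fails to realize $\f(G)$. Having checked every move, the induction gives $\n(G)=\f(G)$ in the even case, completing the proof.
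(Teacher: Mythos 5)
Your proof is correct and follows essentially the same route as the paper's: the cycle moves realize exactly $\{0,1,2,3\}\setminus\{\f(G)\}$, deleting $B$ realizes $\f(G)$ precisely when $\deg B$ is odd, and induction shows no tree move realizes $\f(G)$ in the even case. Your write-up is more explicit than the paper's about the disconnected options (the paper compresses your last two cases into ``by the induction hypothesis $\n(H)\gqs 4$ or $\n(H)=\f(H)$ and clearly $\f(H)\neq\f(G)$''), but the underlying argument is the same.
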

\begin{proof}
Let $n$ be the number of vertices and edges of $G$ outside the cycle. The proof uses induction on $n$ and the base case  with $n=2$ may be easily verified. The following cases establish our argument.
\begin{description}
  \item[{\it Case (i).}]
 Suppose the degree of $B$ is odd. Removing a degree $2$ vertex or an edge from the cycle gives  trees with nim-values $1$ and $2$. Removing vertices $A$ and $B$ gives nim-values $0$ and $3$. It follows that the nim-value of $G$ is greater than $3$.

\item[{\it Case (ii).}] Suppose the degree of $B$ is even. We have $\f(G)=0$ or $3$ and want to show that this equals the nim-value of $G$. Removing a degree $2$ vertex or an edge from the cycle gives  trees with nim-values $1$ and $2$. Removing $B$ or the edge $AB$  also yields nim-values $1$ or $2$. Deleting $A$ gives nim-value $3$ if $\f(G)=0$ and  nim-value $0$ if $\f(G)=3$. To complete the proof it remains to show that no other move $H$ gives nim-value $\f(G)$. By the induction hypothesis $g(H) \gqs 4$ or $g(H)=\f(H)$ and clearly $\f(H)\neq \f(G)$. \qedhere
\end{description}
\end{proof}

Theorem \ref{AB2} will generalize Theorem \ref{AB}. It requires the next definition and proposition.

\begin{adef}{\rm
For   trees $X$ and $Z$, we call the tree constructed in the statement of Proposition \ref{onet2} an {\em $(X,Z)$ tree of level $q \gqs 1$}.  If $X$ or $Z$ is just a single vertex then we may replace them by $\bullet$ in the notation.
}
\end{adef}

\begin{prop} \label{ned}
Let $G$ be a reduced graph consisting of an odd cycle connected at vertex $A$ to a tree. Suppose $A$ has even degree at least $4$. Then there exists an odd degree vertex of the tree that is not telescoping and so that removing it and reducing produces a subgraph with  no telescoping vertices of odd degree.
\end{prop}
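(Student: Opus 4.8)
The plan is to exhibit an explicit vertex to delete, splitting according to whether $\deg A = 4$ or $\deg A \gqs 6$, and to exploit the fact that we only need to avoid telescoping vertices of \emph{odd} degree in the reduced result; telescoping vertices of even degree are harmless. Throughout I will use that, by Corollary \ref{onetel}, $G$ has at most one telescoping vertex, and that by Lemma \ref{teleprops}(i) the mere existence of a telescoping vertex forces $\deg A \lqs 4$. Hence a graph of this type whose attaching vertex has degree $\gqs 5$ has no telescoping vertex at all. Since the cycle contributes $2$ to $\deg A$, the tree meets the cycle in an even number of rooted subtrees, and these are pairwise non-isomorphic because $G$ is reduced (otherwise a cancellation would occur at $A$).

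First I would dispose of the case $\deg A \gqs 6$, where $G$ itself has no telescoping vertex and the tree meets the cycle in an even number $\gqs 4$ of pairwise non-isomorphic rooted subtrees. I would delete a leaf $v$ (degree $1$, hence odd and automatically non-telescoping) chosen inside one of these subtrees so that, after reduction, no new isomorphism is created at $A$. Deleting one leaf alters only the single subtree containing it, so at most one cancellation can be triggered at $A$, and a short size argument over the available leaves produces a choice for which none is. For such $v$ we keep $\deg A \gqs 6$ in the reduced $G-v$, and Lemma \ref{teleprops}(i) then forbids any telescoping vertex. The only delicate point is the boundary $\deg A = 6$ where deletion is forced to trigger a cancellation dropping $\deg A$ to $4$; that situation I would route into the analysis of the $\deg A = 4$ case below.

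The core of the argument is $\deg A = 4$, where the tree meets the cycle in exactly two non-isomorphic rooted subtrees $\mathcal P$ and $\mathcal Q$. By Corollary \ref{onetel} a telescoping vertex of $G$, if present, lies in just one of these (in its ``excess'' part, by Proposition \ref{onet2}(i) and Figure \ref{tele}); I would label $\mathcal Q$ as a side containing no telescoping vertex (either side if none exists) and delete a suitably chosen, e.g.\ deepest, leaf $v$ of $\mathcal Q$. Then $v$ has degree $1$ and is not telescoping. After deletion and reduction, either $\mathcal Q$ collapses so that the cycle component becomes the bare cycle, and we are done, or we are left with the cycle carrying two subtrees (with $\deg A = 4$, or $\deg A = 3$ if $\mathcal Q$ was a single edge). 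In the latter situation I would locate any telescoping vertex of the reduced $G-v$ using Proposition \ref{onet2} together with the isomorphism-propagation of Lemmas \ref{3pos} and \ref{tyuz}, and then compute the parity of its degree.

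The main obstacle, and the heart of the proof, is this last parity computation: showing that any telescoping vertex surviving in the reduced $G-v$ necessarily has \emph{even} degree. The mechanism is that deleting the odd-degree vertex $v$ flips both the vertex- and edge-parities, and I would feed this into the necessary conditions of Lemma \ref{teleprops}(ii),(iii) applied to $G-v$: the level-sizes $|S_1|,\dots,|S_{d-1}|$ below a telescoping vertex must be even, $|S_d|$ must be odd, and the total degree of $S_d$ with the telescoping vertex removed must be even. Tracking how the single deletion perturbs exactly one level-count, together with the location of the new telescoping vertex from Proposition \ref{onet2}, should pin its degree parity to even. Hand computations (a short path attached opposite a shorter one) confirm the phenomenon: deleting the deep leaf on one side shifts the collapse point one step and turns an odd-degree telescoping vertex into an even-degree one. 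Making this bookkeeping uniform over all tree shapes, and reconciling it with the occasional forced cancellation at $A$ inherited from the $\deg A \gqs 6$ boundary, is where the real work lies.
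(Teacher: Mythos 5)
There is a genuine gap, and it sits exactly where you acknowledge ``the real work lies'': the parity bookkeeping for your chosen vertex is not only unproven but false. Concretely, take $\deg A=4$ with $\mathcal P$ a path $p_1-p_2$ of two vertices hanging off $A$ and $\mathcal Q$ the eight-vertex tree $q_1-q_2$ with three branches at $q_2$: a single vertex $q_3$, a path $q_4-q_6$, and a path $q_5-q_7-q_8$. This $G$ is reduced and has no telescoping vertex, so your recipe permits deleting the deepest leaf $q_8$ of $\mathcal Q$. Doing so cancels the two-vertex branches at $q_2$, leaving the cycle with a path of two ($p_1-p_2$) and a path of three ($q_1-q_2-q_3$) attached; in that reduced graph $q_3$ is telescoping of degree $1$, which is odd. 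So ``deepest leaf of a non-telescoping side'' does not work, and no parity argument of the kind you sketch can rescue it, because the deleted vertex and the surviving odd-degree telescoping vertex coexist in a legitimately reduced configuration. The correct move here is $p_2$, and the way the paper finds it is the real content of the proof: one looks at the level sets $S_x$ of vertices at distance $x$ from $A$ and, when some $|S_d|$ is odd with $|S_1|,\dots,|S_{d-1}|$ even, Lemma \ref{teleprops} forces an odd-degree vertex in $S_{d-1}$ whose deletion provably cannot leave an odd-degree telescoping vertex (parts (ii) and (iii) of that lemma applied to $S_{d-1}$). In the example above $|S_3|=3$ is odd and $p_2\in S_2$ has odd degree, which is exactly the paper's choice.

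Your $\deg A\gqs 6$ case has the same unaddressed difficulty: a single leaf deletion can trigger a cascade of cancellations that drops $\deg A$ to $4$ and lands precisely in the problematic configuration, and the ``short size argument'' guaranteeing a safe leaf is not supplied. More broadly, the paper does not case-split on $\deg A$ at all; it inducts on the height of the tree, disposes of the ``some level odd'' case as above, and spends most of its effort on the genuinely hard case where every $|S_x|$ is even. There, after deleting an odd-degree vertex $t$ deep in the tree, an odd-degree telescoping vertex can appear, and handling it requires the auxiliary graph $L$, a second induction, and the transfer of telescoping vertices through Proposition \ref{onet2}(ii) and the $(X,Z)$-tree structure. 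Your proposal never engages with this case, and Corollary \ref{onetel} (uniqueness of the telescoping vertex) plus Lemma \ref{teleprops}(i), which you lean on heavily, are nowhere near strong enough to control \emph{which} vertex becomes telescoping after a deletion, which is the whole point of the proposition.
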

\begin{proof} 
As in Lemma \ref{teleprops}, label the set of vertices of the tree a distance $x$ from $A$ as $S_x$. Let $\ell$ be the largest $x$ for which $|S_x|>0$. In other words, $\ell$ is the height of the tree rooted at $A$. We use induction on $\ell$. Since $G$ is reduced, the base case has $\ell=2$ with the tree in $G$ consisting of paths of length $1$ and $2$ attached to $A$. The proposition is clearly true in this case. Now assume $\ell\gqs 3$.

For some $d$ with $2\lqs d \lqs \ell$, suppose that the numbers $|S_1|, |S_2|, \dots, |S_{d-1}|$ are even and $|S_d|$ is odd. Then the total degree of the vertices in $S_{d-1}$ is odd and hence there exists a vertex $t\in S_{d-1}$ of odd degree. Remove vertex $t$ and reduce. Note that  $t \not\in S_d$ and so cannot be telescoping by Lemma \ref{teleprops} part (ii). Also from Lemma \ref{teleprops} part (ii), if the resulting graph has a telescoping vertex $v$ then it must be one of the remaining vertices in $S_{d-1}-t$. But it cannot be a telescoping vertex of odd degree since that would contradict Lemma \ref{teleprops} part (iii) which says that the total degree of $(S_{d-1}-t)-v$ is even. Therefore, $t$ is the desired vertex.

\SpecialCoor
\psset{griddots=5,subgriddiv=0,gridlabels=0pt}
\psset{xunit=0.7cm, yunit=0.7cm, runit=0.7cm}
\psset{linewidth=1pt}
\psset{dotsize=5pt 0,dotstyle=*}
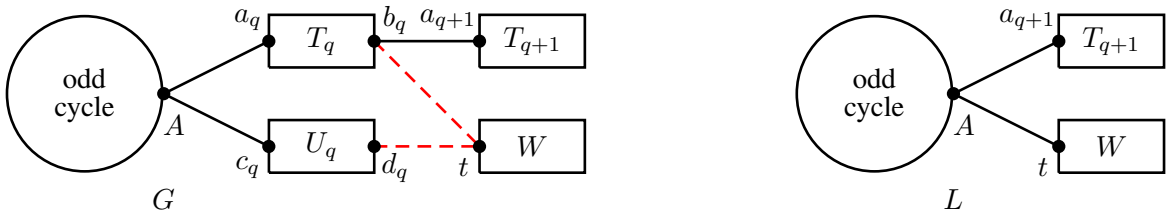
\begin{figure}[ht]
\centering
\begin{pspicture}(-1,-0.9)(21,3.9) 

\psset{arrowscale=2,arrowinset=0.5}

\pscircle(0.5,1.5){1.5}
\rput(0.5,1.8){odd}
\rput(0.5,1.2){cycle}
\rput(2.2,0.9){$A$}

\psline(4,2)(6,2)(6,3)(4,3)(4,2)
\psline(4,0)(6,0)(6,1)(4,1)(4,0)
\psline(8,2)(10,2)(10,3)(8,3)(8,2)
\psline(8,0)(10,0)(10,1)(8,1)(8,0)

\psline(4,2.5)(2,1.5)(4,0.5)
\psline(8,2.5)(6,2.5)
\psline[linestyle=dashed,linecolor=red](6,2.5)(8,0.5)(6,0.5)


\psdots(2,1.5)(4,2.5)(4,0.5)(6,2.5)(6,0.5)(8,2.5)(8,0.5)

\rput(5,2.5){$T_q$}
\rput(3.6,2.9){$a_q$}
\rput(6.4,2.9){$b_q$}
\rput(5,0.5){$U_q$}
\rput(3.6,0.1){$c_q$}
\rput(6.4,0.1){$d_q$}

\rput(9,2.5){$T_{q+1}$}
\rput(7.4,2.9){$a_{q+1}$}
\rput(9,0.5){$W$}
\rput(7.7,0.1){$t$}

\rput(20,0.5){$W$}
\rput(18.7,0.1){$t$}
\rput(20,2.5){$T_{q+1}$}
\rput(18.4,2.9){$a_{q+1}$}

\pscircle(15.5,1.5){1.5}
\rput(15.5,1.8){odd}
\rput(15.5,1.2){cycle}
\rput(17.2,0.9){$A$}

\psline(19,2)(21,2)(21,3)(19,3)(19,2)
\psline(19,0)(21,0)(21,1)(19,1)(19,0)
\psline(19,2.5)(17,1.5)(19,0.5)

\psdots(17,1.5)(19,2.5)(19,0.5)

\rput(17,-0.5){$L$}
\rput(2,-0.5){$G$}

\end{pspicture}
\caption{Graphs in the proof of Proposition \ref{ned}}
\label{tein}
\end{figure}

Otherwise, $|S_1|, |S_2|, \dots, |S_{\ell}|$ are all even, as we now assume. Remove an odd degree vertex $t$ in $S_m$ for some $m$ with $3\lqs m \lqs \ell$. For example $t$ could be a leaf. After removing $t$ suppose there are $p\gqs 0$ cancellations to obtain the reduced graph $H$. In general $t$ is contained in the subgraph $W:=\rho_{G}(t)$ so that $W-t$ becomes disconnected from the cycle component $H$. Note that  $t$ cannot be telescoping by Lemma \ref{teleprops} part (ii). If  $H$ has no telescoping vertices of odd degree then we are done. Otherwise, the telescoping vertex is in $S_m$ by Lemma \ref{teleprops} part (ii). We see that $H$ must take the form shown in Figure \ref{tele} with $q\gqs 1$ since $m>1$. The telescoping vertex is labelled as $a_{q+1}$ with odd degree.

The degrees of vertices $b_q$ and $d_q$ in that figure must have opposite parity in $H$. They will have opposite parity in $G$ as well unless $p=0$ and $t$ is connected by an edge to one of $b_q$ or $d_q$. This is illustrated on the left of Figure \ref{tein}, in the case of $q=1$, with dashes denoting the possible  edges. We need to examine this situation in detail.
\begin{description}
  \item[The case when $t$  is attached to $b_q$ or $d_q$.] We  find a vertex $r$ satisfying the proposition in this case. Consider the graph $L$ on the right of Figure \ref{tein}. It is obtained from $G$ by removing $T_i,U_i$ for $1\lqs i\lqs q$ and adding the edges $Aa_{q+1}$ and $At$. Then $L$ is reduced since $G$ is. By induction, $L$ has an odd  degree vertex $r$ of the tree that is not telescoping, and so that removing it and reducing produces a subgraph $L'$ with  no telescoping vertices of odd degree.  Let $X$ be what remains of $Aa_{q+1}\cup T_{q+1}$ after this reduction and  $Z$ be what remains of $At\cup W$. Then $L'=X\cup Z$. If $r\in T_{q+1}$ then all the cancellation is in $T_{q+1}$ and $Z=At\cup W$. Similarly,  If $r\in W$ then all the cancellation is in $W$ and $X=Aa_{q+1}\cup T_{q+1}$.

Now we remove the same vertex $r$ from $G$. The initial cancellations in $G$ after removing $r$  will be the same as those that produced $L'$ and we carry them out in $G$ to obtain $G'$. Depending on where $t$ is connected, the tree part of $G'$ is an $(X\cup Z, \bullet)$ graph or an  $(X,Z)$ graph.  If $G'$ is a reduced graph then it follows from Proposition \ref{onet2} part (ii) that if $G'$ has a telescoping vertex then so does $L'$ and it must be the same vertex. Therefore $L'$ having  no telescoping vertices of odd degree implies that $G'$ does not have them either and so $r$ is the desired vertex in this case.

However, it may be the case that further cancellation is possible in $G'$.  This cancellation must be  at vertices in $P(A,r)$, though not at the vertex  $A$ since $r$ is not telescoping by Proposition \ref{onet2} part (ii). Assume, without losing generality, that $r\in T_{q+1}$ so that any cancellation happens in $T_1, \dots,T_q$. Suppose that after cancellation in  $T_{q+1}$ the next cancellation is at $s \in T_m$.
\SpecialCoor
\psset{griddots=5,subgriddiv=0,gridlabels=0pt}
\psset{xunit=0.7cm, yunit=0.7cm, runit=0.7cm}
\psset{linewidth=1pt}
\psset{dotsize=5pt 0,dotstyle=*}
\begin{figure}[ht]
\centering
\begin{pspicture}(-1,-0.5)(13,7.5) 

\psset{arrowscale=2,arrowinset=0.5}

\psline[linecolor=orange](8,6)(12,6)(12,7)(8,7)
\psline[linecolor=orange](2,5.5)(0,5.5)(2,1.5)

\psline(2,1)(4,1)(4,2)(2,2)(2,1)
\psline(2,5)(4,5)(4,6)(2,6)(2,5)
\psline(6,0)(8,0)(8,1)(6,1)(6,0)
\psline(6,2)(8,2)(8,3)(6,3)(6,2)
\psline(6,4)(8,4)(8,5)(6,5)(6,4)
\psline(6,6)(8,6)(8,7)(6,7)(6,6)

\psline(8,0)(12,0)(12,1)(8,1)
\psline(8,4)(12,4)(12,5)(8,5)

\psline(6,6.5)(4,5.5)(6,4.5)
\psline(6,2.5)(4,1.5)(6,0.5)

\psdots(2,5.5)(4,5.5)(6,6.5)(8,6.5)(6,4.5)(8,4.5)
\psdots(2,1.5)(4,1.5)(6,2.5)(8,2.5)(6,0.5)(8,0.5)
\psdot[linecolor=orange](0,5.5)

\rput(14,4.5){$T_m$}
\rput(14,0.5){$U_m$}
\rput(-0.99,5.5){{\color{orange} $\cdots\cdots$}}
\rput(0,6){{\color{orange} $b_{m-1}$}}
\rput(1.5,5){$a_{m}$}
\rput(3,5.5){$T_{m,3}$}
\rput(7,6.5){$T_{m,1}$}
\rput(7,4.5){$T_{m,2}$}
\rput(8.6,6.5){$b_{m}$}
\rput(10,6.5){{\color{orange} $W_1$}}
\rput(10,4.5){$W_2$}
\rput(4.3,4.9){$s$}

\rput(3,1.5){$U_{m,3}$}
\rput(7,2.5){$U_{m,1}$}
\rput(7,0.5){$U_{m,2}$}
\rput(1.5,1){$c_{m}$}
\rput(8.6,2.5){$d_{m}$}
\rput(10,0.5){$W_3$}

\end{pspicture}
\caption{Studying cancellation at $s$}
\label{end}
\end{figure}
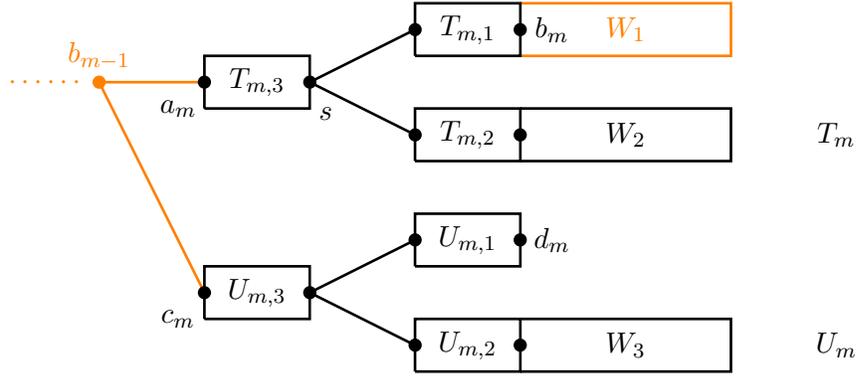
If $s\neq b_m$, then as shown in Figure \ref{end}, $T_m$ contains the  parts $T_{m,1}$ and $T_{m,2}$, which will cancel, and $T_{m,3}$ which contains $s$. Suppose that $b_m \in T_{m,1}$. Let $W_1$ be the part of $G'$ attached to $b_m$ containing $T_{m+1},U_{m+1}, \dots, X,Z$. Then $W_1$ is an $(X\cup Z, \bullet)$ graph or an  $(X,Z)$ graph. For cancellation at $s$, $T_m$ must also contain $W_2$ which is isomorphic to $W_1$ and attached to $T_{m,2}$ in the same way. Now $U_m \cong T_m$ and so has the same components $U_{m,1},$ $U_{m,2},$  $U_{m,3}$ and $W_3$ with $d_m\in U_{m,1}$ and $W_3\cong W_1$ attached to $U_{m,2}$. Therefore we see that after cancellation at $s$ we obtain a graph whose tree part is still an $(X\cup Z, \bullet)$ graph or an  $(X,Z)$ graph. Similar reasoning gives the same conclusion when $s = b_m$.

Repeating this argument shows that the reduced version, $G^*$, of $G'$ is still an $(X\cup Z, \bullet)$ graph or an  $(X,Z)$ graph. By Proposition \ref{onet2}, any telescoping vertex of $G^*$ must be in $X$ or $Z$ and a telescoping vertex for $L'$. Therefore $L'$ having  no telescoping vertices of odd degree implies that $G^*$ does not have them either and so $r$ is the desired vertex as before.
\end{description}

Returning to our main argument, we are left with the situation that one of $b_q$ and $d_q$ has odd degree in $G$. We have proved that there are three alternatives: the original odd degree vertex $t \in S_m$ we chose satisfies the proposition, the vertex $r$ does, or else there exists a new odd degree vertex in $S_{m-1}$, one unit closer to $A$.

\SpecialCoor
\psset{griddots=5,subgriddiv=0,gridlabels=0pt}
\psset{xunit=0.7cm, yunit=0.7cm, runit=0.7cm}
\psset{linewidth=1pt}
\psset{dotsize=5pt 0,dotstyle=*}
\begin{figure}[ht]
\centering
\begin{pspicture}(-1,-2.5)(14,5) 

\psset{arrowscale=2,arrowinset=0.5}

\pscircle(0.5,1.5){1.5}
\rput(0.5,1.8){odd}
\rput(0.5,1.2){cycle}
\rput(2.1,0.7){$A$}

\psline(4,2)(6,2)(6,3)(4,3)(4,2)
\psline(4,0)(6,0)(6,1)(4,1)(4,0)
\psline(4,4)(6,4)(6,5)(4,5)(4,4)
\psline(4,-2)(6,-2)(6,-1)(4,-1)(4,-2)
\psline(8,4)(10,4)(10,3)(8,3)(8,4)
\psline(8,0)(10,0)(10,-1)(8,-1)(8,0)

\psline(8,3.5)(4,4)(2,1.5)(4,2)
\psline(8,-0.5)(4,-1)(2,1.5)(4,1)

\psdots(2,1.5)(4,2)(4,1)(4,4)(4,-1)(8,3.5)(8,-0.5)

\rput(9,3.5){$T_2$}
\rput(5,4.5){$T_1$}
\rput(5,2.5){$U_1$}
\rput(3.6,4.3){$a_1$}
\rput(3.6,2.3){$c_1$}

\rput(3.6,0.7){$x$}
\rput(3.6,-1.3){$y$}

\rput(5,0.5){$X$}
\rput(5,-1.5){$Y$}
\rput(7.6,3.1){$a_2$}
\rput(7.6,-0.1){$t$}

\rput(9,-0.5){$W$}

\rput(13,2.5){$T_1(a_1) \cong U_1(c_1)$}
\rput(13,0.5){$X(x) \cong Y(y)$}

\end{pspicture}
\caption{A possible configuration in the proof of Proposition \ref{ned}}
\label{a6}
\end{figure}
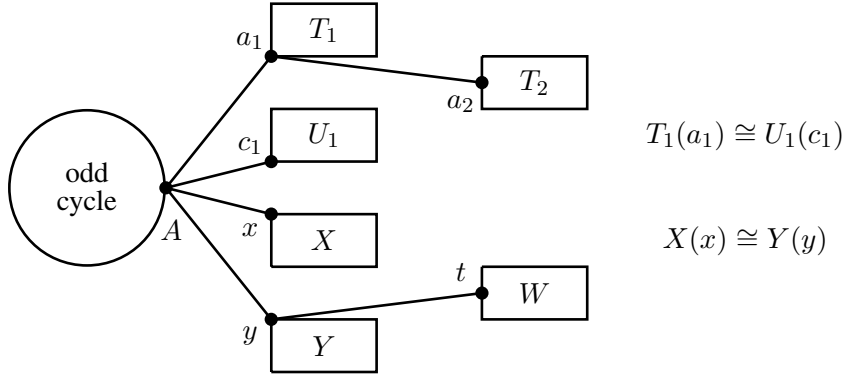

Repeating this reasoning, we eventually find a vertex satisfying the proposition or else we find an odd degree vertex $t$ in $S_2$ so that removing $t$ and reducing (with $p$ cancellations) produces a graph $H$ with an odd degree telescoping vertex $a_2 \in S_2$ (the number of cancellations in $H$ is necessarily $q=1$ and in our notation we have $a_1=b_1$ and $c_1=d_1$). Since $t \in S_2$, the only options for $p$ are $0$ and $1$. If $p=1$ then we have the situation in Figure \ref{a6} with $\deg A =6$. 
Then $S_1$ contains four vertices and two  have odd degree. Deleting one of these odd degree vertices leaves a reduced graph where $\deg A =5$. It follows from part (i) of Lemma \ref{teleprops}  that this graph does not have a telescoping vertex. This completes the $p=1$ case.

Lastly, when $p=0$ we necessarily have $t$ adjacent to $b_1$ or $d_1$. This is the highlighted case we covered earlier in the proof, and the argument there shows that the desired vertex $r$ may be found by induction.
\end{proof}

\begin{proof}[Proof of Theorem \ref{AB2}]
Let $n$ be the number of vertices and edges of $G$ outside the cycle. The proof uses induction on $n$ and the base case with $n=0$ is clearly true. Assume $n\gqs 1$ and
let $A$ be the vertex on the cycle with degree greater than $2$. If $\deg A =3$ then denote by $B$ its adjacent vertex in the tree. Clearly $B$ is a telescoping vertex for $G$ and, with Lemma \ref{teleprops} part (ii) or Corollary \ref{onetel}, it is the only one. The result then follows from Theorem \ref{AB}. Hence we may assume that $A$ has degree at least $4$.

\begin{description}
  \item[{\it Case (i).}]
 Suppose $G$ contains a telescoping vertex of odd degree. Removing a degree $2$ vertex or an edge from the cycle gives  trees with nim-values $1$ and $2$. Removing the telescoping vertex leaves a graph $H$, consisting of a cycle component and a disconnected forest, and with $\f(H)=3\oplus\f(G)$ because we removed a vertex of odd degree. Since the cycle component has $\f=3$ and $\n=0$ it follows that $\n(H)=\f(G)$ which is $0$ or $3$. Next we note that $\deg A = 4$ by Lemma \ref{teleprops} part (i). Proposition \ref{ned} then implies there  exists an odd degree vertex in the tree part of $G$ so that deleting it and reducing gives a graph $H'$ that does not contain a telescoping vertex of odd degree and in which $\deg A \gqs 3$. By induction $\n(H')=\f(H')$ and so this move gives the fourth element of the set $\{0,1,2,3\}$. Therefore the nim-value of $G$ is at least $4$.

\item[{\it Case (ii).}]  Suppose $G$ does not contain a telescoping vertex of odd degree.
We have $\f(G)=0$ or $3$ and want to show that this equals the nim-value of $G$. Removing a degree $2$ vertex or an edge from the cycle gives  trees with nim-values $1$ and $2$. Deleting any vertex or edge from the tree part of $G$ and reducing gives a graph $H$ in which $\deg A \gqs 3$ and that by induction has nim-value $\f(H)$ or at least $4$.
If
$\f(G)=0$ then it follows that all moves have nim-value $\neq 0$ and so $\n(G)=0$. If $\f(G)=3$ then it follows that all moves have nim-value $\neq 3$. To show that $\n(G)=3$ it remains to find a move with nim-value $0$. If $A$ has odd degree then removing it is such a move. If $A$ is even then Proposition \ref{ned}  implies there  exists an odd degree vertex in the tree part of $G$ so that deleting it and reducing gives a graph $H'$ that does not contain a telescoping vertex of odd degree and in which $\deg A \gqs 3$. By induction $\n(H')=\f(H')$ and so we have located the required nim-value $0$ move. \qedhere
\end{description}
\end{proof}

We may now prove  Theorem \ref{conj}, as conjectured  in \cite{Y}. 

\begin{proof}[Proof of Theorem \ref{conj}]
In general, the graph $G$  consists of trees attached to an odd cycle and possibly a number of other disjoint trees.
Let $n$ be the number of vertices and edges of $G$ that are  not on the cycle. The proof uses induction on $n$ and the base case with $n=4$ is easy to verify.
If $\n(G)=\f(G)$ then, with Proposition \ref{par} and \eqref{fff}, we obtain the same relation if we add disjoint trees to $G$. Hence
we may assume that $G$ is connected. Then $\f(G)=0$ or $3$ and want to show that this equals the nim-value of $G$.
The following cases establish the  argument.
\begin{description}
  \item[{\it Case (i).}]  Suppose $\f(G)=0$. This corresponds to $n$ being odd. Removing a vertex or edge from the cycle leaves a non-zero nim-value by Proposition \ref{par}. Removing a vertex or edge not on the cycle and reducing leaves a graph $H$ where $m$ vertices of the cycle have degree greater than $2$ for $m\gqs 1$. (There cannot be any cancellation between trees attached to different vertices of the cycle; see the discussion after Lemma \ref{can}.) If $m\gqs 2$ then $\n(H)=\f(H)$ by induction. If $m=1$ then $\n(H)=\f(H)$ or $\n(H)\gqs 4$ by Theorem \ref{AB2}. In either case $\n(H)\neq 0$ and so $\n(G)=0$.

  \item[{\it Case (ii).}]  Suppose $\f(G)=3$. This corresponds to $n$ being even.  Similar arguments to Case (i) show that all moves of $G$ have nim-value $\neq 3$. It remains to show that moves with nim-values $0,$ $1$ and $2$ exist. Removing an edge of the cycle gives nim-value $1$. Deleting an odd degree vertex on the cycle gives nim-value $0$ and deleting an even degree vertex on the cycle gives nim-value $2$. If all the cycle vertices have odd degree then there must exist an even degree vertex in the rest of $G$ (since $G$ has an odd number of vertices) and removing it gives  nim-value $2$ by induction. In the last case to consider, all the cycle vertices have even degree and we need to locate a move with nim-value $0$. Choose a cycle vertex $A$ with even degree at least $4$.  Proposition \ref{ned}  implies there  exists an odd degree vertex in the tree attached to $A$ so that deleting it and reducing gives a graph $H'$  in which $\deg A \gqs 3$. By induction $\n(H')=\f(H')$ and this provides the  nim-value $0$ move we wanted.  \qedhere
  \end{description}
\end{proof}

\begin{cor}
Let $G$ be a reduced, possibly disconnected graph containing  one odd cycle and no other cycles or loops.  Then $\n(G) \neq \f(G)$ if and only if one of these conditions is true:
\begin{enumerate}
  \item all of the cycle vertices have degree $2$ or
  \item exactly one of the cycle vertices has degree greater than $2$ and there exists a telescoping vertex of odd degree.
\end{enumerate}
\end{cor}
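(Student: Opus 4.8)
The plan is to reduce immediately to the component containing the cycle and then carry out a trichotomy on the number $m$ of cycle vertices of degree greater than $2$, reading off the answer from Theorems \ref{AB2} and \ref{conj} in each case.

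First I would dispose of the disjoint trees. Write $G=G_0\cup F$, where $G_0$ is the connected component containing the odd cycle and $F$ is the union of the remaining (bipartite) trees. The parity function $\f$ is additive with respect to nim-addition under disjoint union, since the vertex-parity bit and the edge-parity bit each combine by xor; this is the general version of \eqref{fff}. Combining this with $\n(G)=\n(G_0)\oplus\n(F)$ from \eqref{oplus} and $\n(F)=\f(F)$ from Proposition \ref{par}, I obtain
\[
\n(G)\oplus\f(G)=\bigl(\n(G_0)\oplus\f(F)\bigr)\oplus\bigl(\f(G_0)\oplus\f(F)\bigr)=\n(G_0)\oplus\f(G_0).
\]
As xor is cancellative, $\n(G)=\f(G)$ if and only if $\n(G_0)=\f(G_0)$. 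Moreover conditions (i) and (ii) refer only to $G_0$ — the degrees of the cycle vertices and the telescoping vertices of the tree attached to the cycle — so it suffices to prove the corollary for $G_0$, and I may assume $G=G_0$ is connected.

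Next I would run the three-way case split on $m$. If $m=0$, every cycle vertex has degree $2$, so no tree is attached and $G_0$ is the pure odd cycle $C$; then $\n(C)=0$ by the first case of Theorem \ref{AB2}, while $C$ has an odd number $n$ of vertices and $n$ edges, giving $\f(C)=1+2=3$. Thus $\n\neq\f$ and condition (i) holds, so the equivalence is confirmed. If $m=1$, then $G_0$ is precisely an odd cycle with a single tree attached at one vertex, which is the hypothesis of Theorem \ref{AB2}: that theorem yields $\n(G_0)\gqs 4$ (hence $\neq\f(G_0)$, since $\f\in\{0,1,2,3\}$) exactly when there is a telescoping vertex of odd degree, and $\n(G_0)=\f(G_0)$ otherwise — which matches condition (ii) verbatim. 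If $m\gqs 2$, Theorem \ref{conj} gives $\n(G_0)=\f(G_0)$ directly, while neither (i) (all vertices degree $2$) nor (ii) (exactly one vertex of degree $>2$) can hold, so both sides of the stated equivalence are false.

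There is no deep obstacle here, as the corollary is an assembly of the already-established theorems; the care needed is entirely in the bookkeeping of the first step. Specifically, I must justify that $\f$ combines by xor under disjoint union in full generality rather than only in the bipartite setting of \eqref{fff}, so that the disjoint-tree contribution $\f(F)$ cancels simultaneously from $\n(G)$ and $\f(G)$, and I must confirm that conditions (i) and (ii) are genuinely intrinsic to the cycle component $G_0$. Once this reduction is in place, the partition into the cases $m=0$, $m=1$, $m\gqs 2$ is exhaustive and the stated conditions match case by case.
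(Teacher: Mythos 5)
Your proposal is correct and follows the same route the paper intends: the corollary is stated without proof as a direct assembly of Theorems \ref{AB2} and \ref{conj}, and your reduction to the cycle component (using that $\f$ is additive under $\oplus$ for arbitrary disjoint unions, together with \eqref{oplus} and Proposition \ref{par}) plus the trichotomy on the number $m$ of cycle vertices of degree greater than $2$ is exactly the intended bookkeeping. The only point worth noting is that you rightly observe $\f(G)=\f(G_0)\oplus\f(F)$ holds for all graphs, not just bipartite ones as in \eqref{fff}, which is what makes the disjoint-tree contribution cancel cleanly.
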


The results we have proved in this section are for graphs with exactly one odd cycle and no  even cycles or loops. It is natural to also ask what happens when we allow even cycles or if we replace the odd cycle with a loop.

\section{Unbounded nim-values} \label{unbd}

\SpecialCoor
\psset{griddots=5,subgriddiv=0,gridlabels=0pt}
\psset{xunit=0.7cm, yunit=0.7cm, runit=0.7cm}
\psset{linewidth=1pt}
\psset{dotsize=5pt 0,dotstyle=*}

\begin{figure}[ht]
\centering
\begin{pspicture}(-2,0.5)(18,3.5) 

\psset{arrowscale=2,arrowinset=0.5}

\psdots(0,1)(0,3)(2,2)(4,2)(-2,2)(6,2)(8,2)
\psline(6,2)(8,2)
\psdots(12,2)(14,2)(16,2)(18,2)

\psline(0,1)(0,3)(2,2)(4,2)(2,2)(0,1)(-2,2)(0,3)
\psline(12,2)(14,2)
\psline(16,2)(18,2)
\pscurve(12,2)(11,2.5)(11,1.5)(12,2)

\rput(5.05,2){$\cdots\cdots$}
\rput(15.05,2){$\cdots\cdots$}

\rput(3,1){$G_1(n)$}
\rput(15,1){$G_2(n)$}

\end{pspicture}
\caption{Graphs with unbounded nim-values}
\label{unb12}
\end{figure}
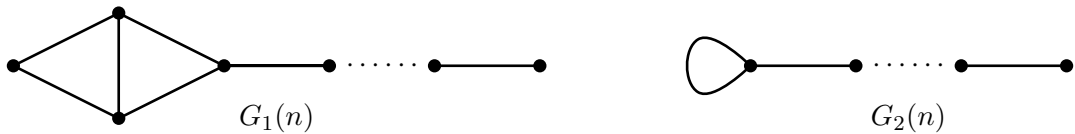
All the graphs we have encountered so far have had quite small nim-values.
In \cite{FS} the authors conjectured that the nim-values of graph games are unbounded. This was demonstrated in \cite{DT} with the family of graphs $G_1(n)$ on $n$ vertices in Figure \ref{unb12}.
Let
\begin{equation*}
  \lambda(k):=\left\{
                \begin{array}{ll}
                  2m, & \hbox{if $k=3m+0$;} \\
                  2m+1, & \hbox{if $k=3m+1$;} \\
                  2m, & \hbox{if $k=3m+2$.}
                \end{array}
              \right.
\end{equation*}
So the first few values of $\lambda$, starting with $\lambda(0)$, are $0,1,0,2,3,2,4,5,4,\dots$.
Then an induction argument, given in the proof of \cite[Thm. 3]{Ri}, shows that
\begin{equation*}
  \n(G_1(n)) = 2\cdot \lambda(n-3) \quad \text{for} \quad n\gqs 4.
\end{equation*}
The  family $G_2(n)$ on $n$ vertices in Figure  \ref{unb12} is just a path with a loop at the end. A similar proof shows
\begin{equation*}
  \n(G_2(n)) = 2\cdot \lambda(n) \quad \text{for} \quad n\gqs 0.
\end{equation*}

Some examples of the nim-values that arise when an odd cycle is attached to a tree are explored in Appendix A of \cite{Ri}. The next result generalizes Theorem 5 of \cite{Y} and demonstrates that the nim-values of ``$4$ or more''  from the statements of Theorems \ref{AB2} and \ref{AB} may become arbitrarily large.

\SpecialCoor
\psset{griddots=5,subgriddiv=0,gridlabels=0pt}
\psset{xunit=0.7cm, yunit=0.7cm, runit=0.7cm}
\psset{linewidth=1pt}
\psset{dotsize=5pt 0,dotstyle=*}
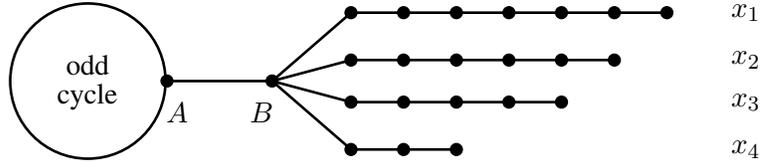
\begin{figure}[ht]
\centering
\begin{pspicture}(-1,-0.5)(12,3.4) 

\psset{arrowscale=2,arrowinset=0.5}

\pscircle(0.5,1.5){1.5}
\rput(0.5,1.8){odd}
\rput(0.5,1.2){cycle}
\rput(2.2,0.9){$A$}
\rput(3.8,0.9){$B$}

\rput(13,2.8){$x_1$}
\rput(13,1.9){$x_2$}
\rput(13,1.1){$x_3$}
\rput(13,0.2){$x_4$}

\psline(2,1.5)(4,1.5)(5.5,2.8)(11.5,2.8)
\psline(4,1.5)(5.5,1.9)(10.5,1.9)
\psline(4,1.5)(5.5,1.1)(9.5,1.1)
\psline(4,1.5)(5.5,0.2)(7.5,0.2)
\psdots(2,1.5)(4,1.5)
\multirput(5.5,2.8)(1,0){7}{\psdot(0,0)}
\multirput(5.5,1.9)(1,0){6}{\psdot(0,0)}
\multirput(5.5,1.1)(1,0){5}{\psdot(0,0)}
\multirput(5.5,0.2)(1,0){3}{\psdot(0,0)}

\end{pspicture}
\caption{An example of $R_{x_1,x_2, \dots,x_n}$ with $n=4$ and $(x_1,x_2,x_3,x_4)=(7,6,5,3)$}
\label{rg}
\end{figure}

Define the family of graphs  $R_{x_1,x_2, \dots,x_n}$ for positive integers $x_1, \dots, x_n$  as follows.  They consist of an edge $AB$ with an  odd cycle attached to $A$  and   $n$ paths of lengths $x_1, \dots, x_n$ attached to $B$ as in Figure \ref{rg}.

Set $\ell(x):= 2\cdot \lambda(x-1)$.  The first few values of $\ell$, starting with $\ell(1)$, are $0,2,0,4,6,4,8,10,8,\dots$.

\begin{theorem} \label{cat}
 We have
\begin{equation*}
  \n(R_{x_1,x_2, \dots,x_n})= \left\{
           \begin{array}{ll}
             \f(R_{x_1,x_2, \dots,x_n})  & \hbox{if $n$ is odd;} \\
            \ell(x_1)\oplus \cdots \oplus \ell(x_n)+4  & \hbox{if $n$ is even.}
           \end{array}
         \right.
\end{equation*}
\end{theorem}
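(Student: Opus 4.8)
The plan is to split on the parity of $n$: the odd case is immediate from Theorem \ref{AB}, while the even case requires an induction modeled on the known computation of $\n(G_2(n))$.

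First I note that $\deg B=n+1$, since $B$ meets the edge $AB$ together with the first edge of each of the $n$ paths. When $n$ is odd, $\deg B$ is even, and $R_{x_1,\dots,x_n}$ is exactly an odd cycle joined by the edge $AB$ to a tree (the spider of paths rooted at $B$); Theorem \ref{AB} then gives $\n(R_{x_1,\dots,x_n})=\f(R_{x_1,\dots,x_n})$ at once, settling the first line. So assume $n$ is even, whence $\deg B$ is odd and Theorem \ref{AB} already yields $\n\gqs 4$; the task is to pin down the exact value $V:=\ell(x_1)\oplus\cdots\oplus\ell(x_n)+4$. Two structural facts guide the whole argument: each $\ell(x_i)$ is even, so using $2a\oplus 2b=2(a\oplus b)$ we get $\ell(x_1)\oplus\cdots\oplus\ell(x_n)=2\bigl(\lambda(x_1-1)\oplus\cdots\oplus\lambda(x_n-1)\bigr)$, making $V$ even with $V\gqs 4$; and $\ell(x)=\n(G_2(x-1))$, which is why the recursion of $\lambda$ should drive everything.

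Before inducting I would normalize with the symmetry lemma. Two paths of equal length are isomorphic trees attached at $B$, so they cancel, decreasing $n$ by $2$ and deleting the pair; since $\ell(x)\oplus\ell(x)=0$ this leaves $V$ unchanged, so I may assume the $x_i$ are distinct, and in particular reduce toward the base case $n=0$, where $R$ is an odd cycle with a single pendant leaf and a direct mex computation (as in Figure \ref{simex} for the triangle) gives $\n=4$, matching the empty nim-sum plus $4$. The main induction is on the number of vertices and edges outside the cycle. In the inductive step I would sort the options into: (A) deleting a cycle edge or a degree-$2$ cycle vertex, and (B) deleting $A$, all of which leave bipartite graphs; (C) deleting the edge $AB$, leaving the cycle (value $0$) disjoint from the spider, with value $0\oplus\f(\text{spider})$; (D) deleting $B$, leaving the cycle disjoint from the free paths, with value $\bigoplus_i\n(P_{x_i})$; all of (A)--(D) realize only the small values $0,1,2,3$ by Proposition \ref{par} and \eqref{oplus}, exactly as in the proof of Theorem \ref{AB}. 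The substantive options are (E), deleting a vertex or edge inside some path $i$: removing the leaf of path $i$ gives an even-$n$ graph $R$ with $x_i$ shortened by one (handled by the induction hypothesis), an internal cut gives such a shortened even-$n$ graph disjoint-union a free path $P_k$ (nim-adding a $1$ or a $2$ via \eqref{oplus}), and cutting path $i$ off entirely drops to the odd-$n$ case (value $\f$) disjoint-union a free path. Because $\ell(x)=\n(G_2(x-1))$, the values produced by shortening and cutting a single path range just as the options of $G_2$ do, so the range computation behind $\n(G_2(n))$ in \cite[Thm.\ 3]{Ri} transfers, with the remaining paths carried passively inside the fixed nim-summand $\bigoplus_{j\ne i}\ell(x_j)$.

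The heart of the proof, and the step I expect to be hardest, is verifying that this family of option-values is exactly $\{0,1,\dots,V-1\}$ and omits $V$. The delicate point is the interaction of the ordinary integer offset $+4$ with the nim-sum $\bigoplus_i\ell(x_i)$: because $+4$ is \emph{not} nim-addition, $R$ is not equivalent to any fixed disjoint union, so the exclusion of $V$ cannot be read off formally and must be argued directly. I would exploit that $V$ is even to dispose of all odd-valued options at once, obtain the odd values below $V$ by nim-adding a free $P_k$ with $\n(P_k)=1$ in a type (E) cut, and generate the even values below $V$ from the $\lambda$-recursion applied to the shortened path, checking via the induction hypothesis and the range of $x\mapsto\ell(x)$ that no even-valued option equals $V$. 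Packaging the passive paths together with the active path through the addition-of-games relation \eqref{addg} is the cleanest way to organize these realizations and non-realizations uniformly, and it is there that essentially all of the bookkeeping concentrates.
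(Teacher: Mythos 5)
Your proposal follows essentially the same route as the paper's proof: the odd case is dispatched by Theorem \ref{AB}, and the even case proceeds by induction, classifying the cycle/$A$/$B$ moves as realizing only $0,1,2,3$, writing the path moves as $4$ plus nim-sums via $(a+4)\oplus b=(a\oplus b)+4$, and invoking the addition-of-games relation \eqref{addg} together with the single-path mex identity (the paper's $\ell(k)=\mex(E(k))$, your transfer from $\n(G_2)$) to conclude the mex equals $L$. Your preliminary cancellation of equal-length paths is harmless but not needed once \eqref{addg} is in play; otherwise the argument matches the paper's.
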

\begin{proof}
  Set $L:=\ell(x_1)\oplus \cdots \oplus \ell(x_n)$ and the same nim sum with $\ell(x_r)$ removed is $L \oplus \ell(x_r)$. Let $X :=x_1+\cdots+x_n$. For $n=X=0$ we understand the empty nim-sum $L$ is $0$. To compute $\n(R_{x_1,x_2, \dots,x_n})$ we look at all possible moves and use induction on $X$.  The base case with $n=X=0$ has nim-value $4$ by Theorem \ref{AB}. When $n$ is odd the result also follows from Theorem \ref{AB}, so we may assume $n$ is even.

From the cycle and vertices $A$ and $B$ we obtain moves with nim-values $0,$ $1,$ $2$ and $3$. The only possible nim-values outside of these come from removing vertices and edges from the $n$ paths giving:
\begin{gather*}
  L \oplus \ell(x_r) \oplus \ell(x_r-1)+4, \\
(L \oplus \ell(x_r) \oplus \ell(x_r-1)+4)\oplus 1, \\
 (L \oplus \ell(x_r) \oplus \ell(i)+4)\oplus 1, \\
(L \oplus \ell(x_r) \oplus \ell(i)+4)\oplus 2
\end{gather*}
for all $r,$  $i$ satisfying $1\lqs r\lqs n$ and $1\lqs i \lqs x_r-2$. Noting that $(a+4)\oplus b = (a\oplus b)+4$ for $b=1,2$ we obtain
\begin{equation} \label{bx}
  \n(R_{x_1,x_2, \dots,x_n})=4+\mex\left(\Bigl\{  L \oplus \ell(x_r) \oplus \ell(x_r-1)\oplus (b-1), L \oplus \ell(x_r) \oplus \ell(i)\oplus b \Bigr\}\right)
\end{equation}
where $1\lqs r\lqs n$, $1\lqs i \lqs x_r-2$ and $b=1,2$. To compute this, let
\begin{equation*}
  E(k):=\Bigl\{ \ell(k-1), \ell(k-1)\oplus 1\Bigr\} \cup \Bigl\{\ell(i)\oplus 1, \ell(i)\oplus 2 \, \Big| \, 1\lqs i \lqs k-2 \Bigr\}.
\end{equation*}
It is straightforward to prove that, for all positive integers $k$,
\begin{equation} \label{lmex}
  \ell(k) = \mex(E(k)).
\end{equation}
We may rewrite \eqref{bx} as
\begin{equation} \label{tru}
  \n(R_{x_1,x_2, \dots,x_n})=4+\mex\left(\Bigl\{  L \oplus \ell(x_r) \oplus a \Bigr\}\right)
\end{equation}
where $1\lqs r\lqs n$ and $a \in E(x_r)$. It follows from \eqref{lmex}, \eqref{tru}  and the addition of games relation \eqref{addg} that $\n(R_{x_1,x_2, \dots,x_n})=4+L$ as desired.
\end{proof}

\SpecialCoor
\psset{griddots=5,subgriddiv=0,gridlabels=0pt}
\psset{xunit=0.7cm, yunit=0.7cm, runit=0.7cm}
\psset{linewidth=1pt}
\psset{dotsize=5pt 0,dotstyle=*}
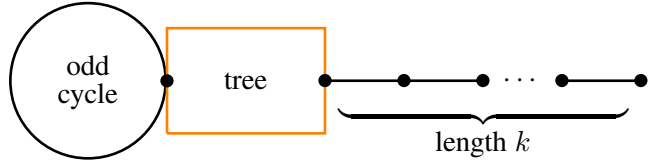
\begin{figure}[ht]
\centering
\begin{pspicture}(5,0.5)(17,4) 

\psset{arrowscale=2,arrowinset=0.5}

\pscircle(6.5,2.5){1.5}

\psline[linecolor=orange](8,1.5)(11,1.5)(11,3.5)(8,3.5)(8,1.5)
\psdots(8,2.5)

\rput(14,1.5){$\underbrace{\qquad \qquad \qquad \qquad \qquad}_{\text{\normalsize length $k$}}$}

\rput(9.5,2.5){tree}
\psline(11,2.5)(14,2.5)
\psline(15.5,2.5)(17,2.5)
\multirput(11,2.5)(1.5,0){5}{\psdot(0,0)}

\rput(14.75,2.5){$\cdots$}
\rput(6.5,2.8){odd}
\rput(6.5,2.2){cycle}

\end{pspicture}
\caption{Connecting an odd cycle, a tree and a path}
\label{conj1y}
\end{figure}
Suppose we fix each $x_i$ in $R_{x_1,x_2, \dots,x_n}$ except for $x_1$ say, and let $k=x_1$ increase. By Theorem \ref{cat}, the sequence of nim-values $g_k:=\n(R_{k,x_2, \dots,x_n})$   will be $0,3, 0,3,0,3,0,3, \dots$  if $n$ is odd and $\ell(k)\oplus r+4$, for some even number $r\gqs 0$, if $n$ is even (since the image of $\ell$ is the set of all even numbers). Khandhawit and  Ye investigated what  happens in general when a path of length $k$, a tree and an odd cycle are attached together as in Figure \ref{conj1y}.
They found two types of behavior for the sequence of nim-values $g_k$ for large $k$; see Table 6 of \cite{Y}. We see there is also a third type of behavior (which invalidates their Conjecture 1) given in (iii) next. In the examples we have computed, the sequence $g_1,g_2,g_3,\dots$ eventually matches one of the following three sequences:
\begin{enumerate}
\item  $\ell(1)\oplus r+4, \ell(2)\oplus r+4, \ell(3)\oplus r+4, \dots$ for some fixed $r\gqs 0$,
  \item the period $2$ sequence $4m,4m+3,4m,4m+3,\dots$ for some fixed $m$
  \item or the period $2$ sequence  $4m+1,4m+2,4m+1,4m+2,\dots$ for some fixed $m$.
\end{enumerate}
An instance of (iii) is shown for the family $H_{1,k}$ in Figure \ref{conjex}. The family $H_{2,k}$ in that figure shows that $r$ in (i) may be odd as the sequence is $g_k=\ell(k)\oplus 3 +4$ for $k\gqs 13$.
The examples in Figure \ref{conjex} also confirm Theorem \ref{AB2}; we see the highlighted vertices in $H_{1,k}$ and $H_{2,k}$ are odd degree telescoping vertices and the nim-values of these graphs are $4$ or more. By comparison, $H_{3,k}$ is similar to $H_{2,k}$ but does not have a telescoping vertex and its nim-values are $\f(H_{3,k})$.

\SpecialCoor
\psset{griddots=5,subgriddiv=0,gridlabels=0pt}
\psset{xunit=0.7cm, yunit=0.7cm, runit=0.7cm}
\psset{linewidth=1pt}
\psset{dotsize=5pt 0,dotstyle=*}
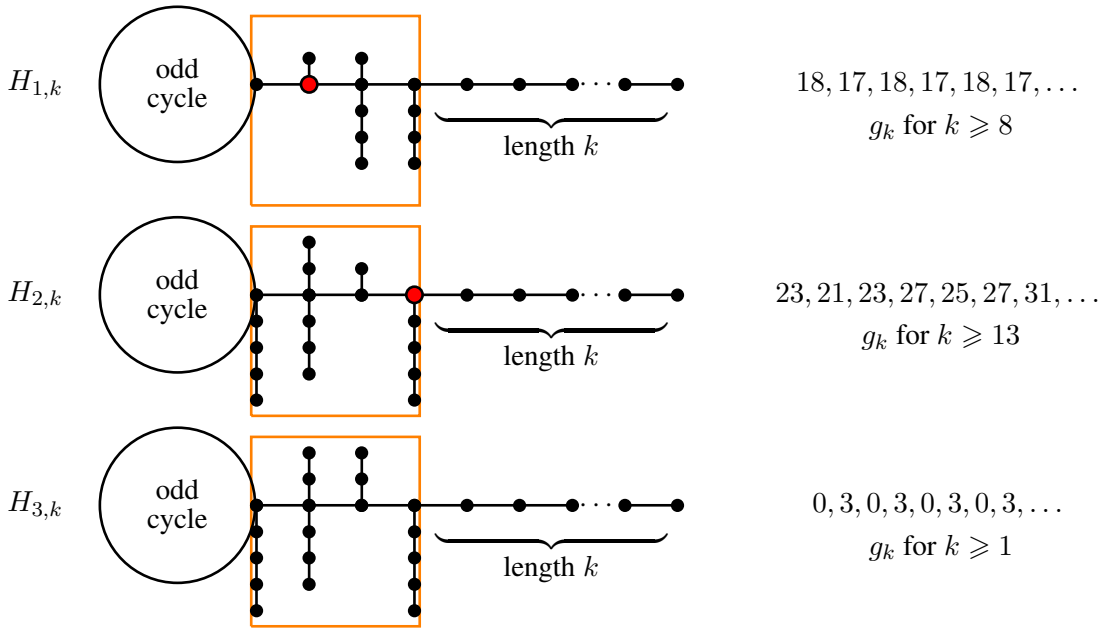
\begin{figure}[ht]
\centering
\begin{pspicture}(3,0)(24,12) 

\psset{arrowscale=2,arrowinset=0.5}

\psline[linecolor=orange](7.9,0.2)(11.1,0.2)(11.1,3.8)(7.9,3.8)(7.9,0.2)
\pscircle(6.5,2.5){1.5}

\rput(13.6,1.5){$\underbrace{\qquad \qquad \qquad \qquad}_{\text{\normalsize length $k$}}$}

\psline(8,2.5)(14,2.5)
\psline(15,2.5)(16,2.5)

\psline(8,2.5)(8,0.5)
\multirput(8,2.5)(0,-0.5){5}{\psdot(0,0)}
\psline(9,3.5)(9,1)
\multirput(9,3.5)(0,-0.5){6}{\psdot(0,0)}
\psline(10,2.5)(10,3.5)
\multirput(10,2.5)(0,0.5){3}{\psdot(0,0)}
\psline(11,2.5)(11,0.5)
\multirput(11,2.5)(0,-0.5){5}{\psdot(0,0)}

\multirput(8,2.5)(1,0){9}{\psdot(0,0)}

\rput(14.5,2.5){$\cdots$}
\rput(6.5,2.8){odd}
\rput(6.5,2.2){cycle}
\rput(3.8,2.5){$H_{3,k}$}

\rput(21,2.5){$0,3,0,3,0,3,0,3,\dots$}
\rput(21,1.7){$g_k$ for $k\gqs 1$}


\psline[linecolor=orange](7.9,4.2)(11.1,4.2)(11.1,7.8)(7.9,7.8)(7.9,4.2)
\pscircle(6.5,6.5){1.5}

\rput(13.6,5.5){$\underbrace{\qquad \qquad \qquad \qquad}_{\text{\normalsize length $k$}}$}

\psline(8,6.5)(14,6.5)
\psline(15,6.5)(16,6.5)

\psline(8,6.5)(8,4.5)
\multirput(8,6.5)(0,-0.5){5}{\psdot(0,0)}
\psline(9,7.5)(9,5)
\multirput(9,7.5)(0,-0.5){6}{\psdot(0,0)}
\psline(10,6.5)(10,7)
\multirput(10,6.5)(0,0.5){2}{\psdot(0,0)}
\psline(11,6.5)(11,4.5)
\multirput(11,6.5)(0,-0.5){5}{\psdot(0,0)}

\multirput(8,6.5)(1,0){9}{\psdot(0,0)}

\pscircle[fillstyle=solid,fillcolor=red](11,6.5){0.18}

\rput(14.5,6.5){$\cdots$}
\rput(6.5,6.8){odd}
\rput(6.5,6.2){cycle}
\rput(3.8,6.5){$H_{2,k}$}

\rput(21,6.5){$23, 21, 23, 27, 25, 27, 31,\dots$}
\rput(21,5.7){$g_k$ for $k\gqs 13$}


\psline[linecolor=orange](7.9,8.2)(11.1,8.2)(11.1,11.8)(7.9,11.8)(7.9,8.2)
\pscircle(6.5,10.5){1.5}

\rput(13.6,9.5){$\underbrace{\qquad \qquad \qquad \qquad}_{\text{\normalsize length $k$}}$}

\psline(8,10.5)(14,10.5)
\psline(15,10.5)(16,10.5)

\psline(9,10.5)(9,11)
\multirput(9,10.5)(0,0.5){2}{\psdot(0,0)}
\psline(10,11)(10,9)
\multirput(10,11)(0,-0.5){5}{\psdot(0,0)}
\psline(11,10.5)(11,9)
\multirput(11,10.5)(0,-0.5){4}{\psdot(0,0)}

\multirput(8,10.5)(1,0){9}{\psdot(0,0)}

\pscircle[fillstyle=solid,fillcolor=red](9,10.5){0.18}

\rput(14.5,10.5){$\cdots$}
\rput(6.5,10.8){odd}
\rput(6.5,10.2){cycle}
\rput(3.8,10.5){$H_{1,k}$}

\rput(21,10.5){$18,17,  18,17, 18,17, \dots$}
\rput(21,9.7){$g_k$ for $k\gqs 8$}

\end{pspicture}
\caption{Examples of nim-value sequences}
\label{conjex}
\end{figure}
Are any further types of sequences possible? In general we may ask what kinds of nim-value sequences arise when a path of length $k$ is attached to any graph.

\section{Nim-values of some graphs with many odd cycles or loops} \label{many}

Let $G$ be a connected graph without loops. If  every vertex of $G$ has degree at most two then it is just a path or a cycle. If we allow one vertex $P$ to have higher degree, then $G$ must consist of a number of cycles and paths attached to $P$.
\SpecialCoor
\psset{griddots=5,subgriddiv=0,gridlabels=0pt}
\psset{xunit=0.7cm, yunit=0.7cm, runit=0.7cm}
\psset{linewidth=1pt}
\psset{dotsize=5pt 0,dotstyle=*}
\begin{figure}[ht]
\centering
\begin{pspicture}(-1,-0.3)(12,3.4) 

\psset{arrowscale=2,arrowinset=0.5}

\pscircle(2.5,1.5){1.5}
\pscircle(2.2,1.5){1.8}
\pscircle(2.8,1.5){1.2}
\rput(-1.3,1.5){$r$ odd cycles}
\rput(4.2,0.7){$P$}

\rput(13,2.8){$x_1$}
\rput(13,1.9){$x_2$}
\rput(13,1.1){$x_3$}
\rput(13,0.2){$x_4$}

\psline(4,1.5)(5.5,2.8)(11.5,2.8)
\psline(4,1.5)(5.5,1.9)(10.5,1.9)
\psline(4,1.5)(5.5,1.1)(9.5,1.1)
\psline(4,1.5)(5.5,0.2)(7.5,0.2)
\psdot(4,1.5)
\multirput(5.5,2.8)(1,0){7}{\psdot(0,0)}
\multirput(5.5,1.9)(1,0){6}{\psdot(0,0)}
\multirput(5.5,1.1)(1,0){5}{\psdot(0,0)}
\multirput(5.5,0.2)(1,0){3}{\psdot(0,0)}

\end{pspicture}
\caption{A graph from Theorem \ref{trro}}
\label{rodd}
\end{figure}
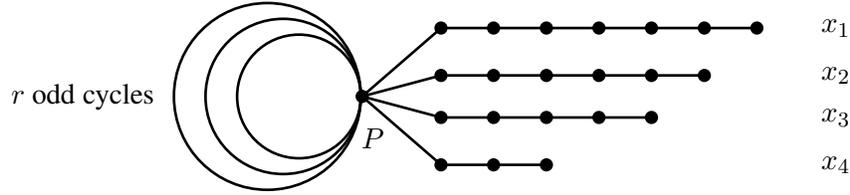
Attaching an even cycle to a vertex in a graph is the same as adding a disjoint vertex, by Lemma \ref{twin}, and the nim-value simply changes by $\oplus 1$. So we may assume that the $r$  cycles attached to $P$ are all odd. Let the $m$ paths attached to $P$ have lengths $x_1 \gqs x_2 \gqs \cdots \gqs x_m \gqs 0$ and it is convenient to set
\begin{equation}\label{xsa}
X:=\sum_{i=1}^m x_i, \qquad \hat{X}:= \sum_{i=1}^m (-1)^{i+1} x_i.
\end{equation}
See for example Figure \ref{rodd}.
We have $\hat{X}=0$ if and only if the paths cancel in pairs and $G$ reduces to just the $r$ cycles. Similarly, $\hat{X}=1$ if and only if  $G$ reduces to the $r$ cycles with two paths of lengths differing by $1$ (or one path of length $1$) attached at $P$.

\begin{theorem} \label{trro}
Let $G$ consist of a vertex $P$ to which $r$ odd cycles and $m$ paths are attached. Then with the above notation we have
\begin{equation*}
  \n(G)= \left\{
           \begin{array}{ll}
             0 & \hbox{for $r$ odd and $\hat{X}=0$;} \\
             4 & \hbox{for $r$ odd and $\hat{X}=1$;} \\
             \f(G) & \hbox{otherwise.}
           \end{array}
         \right.
\end{equation*}
\end{theorem}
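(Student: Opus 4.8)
The plan is to induct on the total number of vertices and edges of $G$, after first using cancellation (the symmetry lemma, Lemma \ref{twin}) to reduce to the case where the attached paths have pairwise distinct lengths $y_1 > \cdots > y_k \gqs 1$. Cancelling a pair of equal paths removes an even number of vertices and edges, so it preserves both $\f(G)$ and the alternating sum $\hat X$, and on the reduced graph $\hat X = y_1 - y_2 + \cdots$; one then checks that $\hat X = 0$ exactly when $k=0$ (so $G$ reduces to the $r$ cycles) and $\hat X = 1$ exactly when the reduced graph is a single pendant edge or two paths whose lengths differ by $1$. A direct count gives $\f(G) = (1+X)_{(2)} + 2\,(r+X)_{(2)}$, so that $\f(G) \in \{0,1,2,3\}$ is pinned down by the parities of $r$ and $X$ (and $X \equiv \hat X \bmod 2$).

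The engine of the induction is a classification of the options together with one clean parity fact. Every single move deletes one edge, or one vertex with its incident edges, changing at least one of $|V(G)|_{(2)}$, $|E(G)|_{(2)}$; the ensuing reduction preserves both parities, so \emph{for every option $H$ we have $\f(H)\neq \f(G)$}. The options also have a uniform shape: deleting $P$ opens every odd cycle into a path and yields a bipartite forest; deleting a cycle edge or a non-$P$ cycle vertex converts one odd cycle into attached path(s), leaving a standard graph with $r-1$ cycles; and deleting an edge or vertex of an attached path leaves a smaller standard graph together with one disjoint path. By the induction hypothesis, Proposition \ref{par} and \eqref{fff}, each option value is $\f(H)$ (when its cycle-bearing part is bipartite or falls under the ``otherwise'' case), or $0$ (when that part reduces to an odd number of bare cycles), or at least $4$ (when that part is an odd number of cycles with $\hat X = 1$), with any disjoint path contributing a nim factor $1$ or $2$. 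In particular a value $\gqs 4$ can arise only when a move keeps all $r$ cycles (so $r$ stays odd) and drives the attached paths to $\hat X = 1$.

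The three cases are then settled by computing the mex in \eqref{mex}. When $\hat X = 0$ the reduced graph is just $r$ odd cycles, and the value $0$ for $r$ odd is exactly Proposition \ref{oddcyc}. For the ``otherwise'' case I must show $\n(G) = \f(G)$: no option equals $\f(G)$, since $\f(H)$-valued options miss it by the parity fact, options valued $\gqs 4$ miss it because $\f(G)\lqs 3$, and the remaining possibility (a value-$0$ cycle-part) is governed by its accompanying disjoint path and ruled out by a short parity check; it then remains to realize every value below $\f(G)$, for which I would give explicit deletions — deleting $P$ realizes $\f(G)\oplus 1$ or $\f(G)\oplus 3$, deleting a cycle edge at $P$ realizes $\f(G)\oplus 2$, and deleting a suitably chosen leaf realizes $\f(G)\oplus 3$ while keeping $\hat X \gqs 2$. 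For $\hat X = 1$, where $\n(G)=4$, I would exhibit options of values $0,1,2,3$ (for the pendant subcase, delete the pendant vertex, the pendant edge, a cycle edge at $P$, and $P$) and show no option equals $4$: deletions of $P$ or of cycle material give bipartite or even-cycle positions (values $\lqs 3$ or $\f$-valued), while a path move either shortens a path — flipping the parity of $\hat X$ away from $1$ — or splits off a nonempty disjoint path, forcing any case-2 component to combine to a value $\gqs 5$.

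The hard part will be precisely this last bookkeeping for the ``otherwise'' and $\hat X = 1$ cases: controlling exactly which options attain a value $\gqs 4$, and dually guaranteeing that the required low values (most delicately a value-$0$ move when $\f(G)=3$) are actually achievable. Both rest on how the invariant $\hat X$ moves under a single deletion — its parity flips when a path is shortened, and it shifts by a controlled amount when a path is split — so the real work is a finite parity argument showing that from any reduced configuration one can always push $\hat X$ into, or keep it away from, the distinguished values $0$ and $1$ as each case demands.
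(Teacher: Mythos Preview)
Your approach is essentially the paper's: the paper inducts on $(r,X)$ lexicographically and runs through six parity cases, while you induct on total size after reducing, but the option classification and the parity bookkeeping are the same. Two concrete points need repair.

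First, appealing to Proposition~\ref{oddcyc} for the case $r$ odd, $\hat X=0$ is circular: in this paper that proposition is obtained \emph{as a corollary} of the present theorem. Your own induction already handles it, since every option of $r$ bare odd cycles either removes $P$ (bipartite, value $\f\neq 0$) or drops to $r-1$ even cycles plus a path (value $\f\neq 0$ by induction).

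Second, the prescription ``deleting a cycle edge at $P$ realizes $\f(G)\oplus 2$'' breaks when $r$ is even: after that deletion $r-1$ is odd, and the freed even path can drive the new alternating sum to $0$ or $1$, landing you in one of the exceptional cases rather than the $\f$-case. The paper avoids this by using \emph{path} moves when $r$ is even (remove a degree-$2$ path vertex, or a leaf vertex/edge), which keep $r$ even so the inductive value is guaranteed to be $\f$ of the result. Your key control --- that a leaf deletion shifts $\hat X$ by exactly $\pm 1$, so one cannot jump from $\hat X\gqs 2$ to $\hat X=0$ without detaching a nontrivial path --- is exactly what the paper uses implicitly in its Cases~(v) and~(vi), and it is correct.
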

\begin{proof}
We use induction on $(r,X)$ ordered lexicographically. In other words, $(r,X)>(r',X')$ exactly when $r>r'$ or when $r=r'$ and $X>X'$. Any move of $G$ gives a graph with smaller $(r,X)$ (and removing $P$ gives a bipartite graph). The base case of the induction is true since $r=X=0$ means $G$ is a single vertex.

Note that
\begin{equation*}
  \f(G)=(X+1)_{(2)}+2((X+r)_{(2)}).
\end{equation*}
The following cases establish the  argument.
\begin{description}
  \item[{\it Case (i).}] Suppose $r$ is odd and $\hat{X}=0$. Then $\deg P$ is even, $X$ is even and $\f(G)=3$. We want to show that $\n(G)=0$ and this follows if  all moves $H$ have $\n(H)\neq 0$. Removing $P$ or removing an edge or vertex on a cycle gives $H$ with $\n(H)=\f(H)$. But for these moves $\f(H)\neq 0$ since we may only get $0$ by removing an odd degree vertex. If $H$ is a move that deletes a vertex or edge from one of the paths of $G$ then clearly $\n(H) \neq 0$ if  $\hat{X}\to 1$. The last option is that $H$ makes $\hat{X}$  greater than $1$ so that $\n(H)=\f(H)$. If $\f(H)= 0$ then $H$ removes a vertex of odd degree. However, the only path move that does this removes a leaf vertex and has  $\hat{X}\to 1$. Hence $\n(H)\neq 0$ in this option.

  \item[{\it Case (ii).}]  Suppose $r$ is odd and $\hat{X}=1$. Then  $X$ is odd, $\f(G)=0$ and we want to show that $\n(G)=4$.
First we prove that moves with nim-values $0,$ $1,$ $2$ and $3$ exist. Deleting a vertex or edge on the cycle gives  nim-values $1$ and $2$. Since $\hat{X}=1$, there exists $r$ such that $x_r=1+x_{r+1}$ (with $x_{r+1}$ possibly $0$). Removing the vertex at the end of the path of length $x_r$ makes $\hat{X}\to 0$, giving nim-value $0$. If $\deg P$ is odd then removing $P$ is a move with nim-value $3$. Otherwise $\deg P$ is even implying there exists $r$  such that $x_r=1+x_{r+1}$ and $x_{r+1} \gqs 1$. Removing the vertex at the end of the path of length $x_{r+1}$ makes $\hat{X} \to 2$ and this move has nim-value $3$.

Next we show that all moves $H$ have $\n(H)\neq 4$. The only possible moves with nim-value $\gqs 4$ have $\hat{X}$ remaining as $1$. If $x_r=1+x_{r+1}$ then the only move that does this has $x_r \to x_r-2$ by removing a degree $2$ vertex. But this move has nim-value $4\oplus 1 =5$.

  \item[{\it Case (iii).}] Suppose $r$ is even and $X$ is even.  Then   $\f(G)=1$ and we want to show that $\n(G)=1$. To find a move with nim-value $0$ we may remove a degree $2$ vertex on one of the paths of $G$. We cannot do this if $x_1 \lqs 1$. Since $X$ is even, it follows that the degree of $P$ must be even if $x_1 \lqs 1$. Removing $P$ then gives the  move with nim-value $0$.
 If $H$ is any move then  $\n(H)=0$, $4$ or $\f(H)$ and not equal to $1$.

\item[{\it Case (iv).}] Suppose $r$  is even and $X$ is odd.  Then   $\f(G)=2$ and we want to show that $\n(G)=2$. We have $x_1\gqs 1$ and removing the end edge and vertex on this path gives nim-values $0$ and $1$ respectively. If $H$ is any move then $\n(H)$ is $0$, $4$ or $\f(H)$ and  not equal to $2$.

\item[{\it Case (v).}] Suppose $r$ is odd, $X$ is even and $\hat{X}\gqs 2$. Then   $\f(G)=3$ and we want to show that $\n(G)=3$. Removing a cycle edge and vertex gives nim-values $1$ and $2$ respectively. If $\deg P$ is odd the removing it gives nim-value $0$. Otherwise, the largest $r$ for which $x_r$ is positive is even. Removing the vertex at the end of this path increases $\hat{X}$ and therefore this move has nim-value $0$. We have shown that moves with nim-values $0,$ $1$  and $2$ exist.

    It remains to show that all moves $H$ have $\n(H)\neq 3$. By induction we have $\n(H)=0\oplus t$ or $4\oplus t$ or $\f(H)$ for $t=0,$ $1$ or $2$, the nim-value of the disconnected path. It follows that $\n(H)\neq 3$.

\item[{\it Case (vi).}] Suppose $r$ is odd, $X$ is odd and $\hat{X}\gqs 2$. Then   $\f(G)=0$ and we want to show that $\n(G)=0$. This is true if all moves $H$ have $\n(H)\neq 0$. As in the previous case, $\n(H)=0\oplus t$ or $4\oplus t$ or $\f(H)$ for $t=0,$ $1$ or $2$. The only way to obtain $\n(H)=0$ is if a vertex or edge is removed so that $\hat{X} \to 0$ and the disconnected path has nim-value $t=0$. This is not possible for $\hat{X}\gqs 2$. \qedhere
\end{description}
\end{proof}

\SpecialCoor
\psset{griddots=5,subgriddiv=0,gridlabels=0pt}
\psset{xunit=0.5cm, yunit=0.5cm, runit=0.5cm}
\psset{linewidth=1pt}
\psset{dotsize=5pt 0,dotstyle=*}

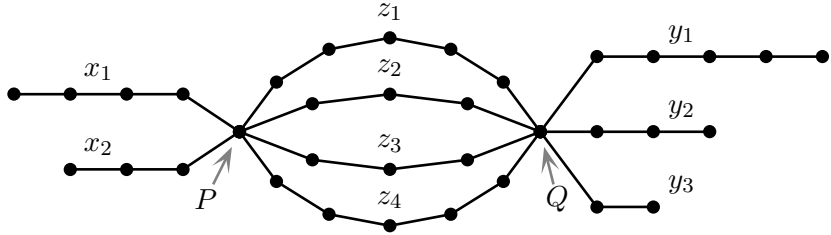
\begin{figure}[ht]
\centering
\begin{pspicture}(6,2)(26,8) 

\psset{arrowscale=2,arrowinset=0.5}

\savedata{\mydataz}[
{{20., 5.00005}, {19.0163, 6.32177}, {17.6191, 7.19499}, {16.,
  7.5}, {14.3809, 7.19499}, {12.9837, 6.32177}, {12., 5.00005}}
]
\dataplot[plotstyle=line]{\mydataz}
\dataplot[plotstyle=dots]{\mydataz}

\savedata{\mydataza}[
{{19.9997, 5.00017}, {18.0614, 5.74626}, {16., 6.}, {13.9386,
  5.74626}, {12.0003, 5.00017}}
]
\dataplot[plotstyle=line]{\mydataza}
\dataplot[plotstyle=dots]{\mydataza}

\savedata{\mydatazb}[
{{19.9997, 4.99983}, {18.0614, 4.25374}, {16., 4.}, {13.9386,
  4.25374}, {12.0003, 4.99983}}
]
\dataplot[plotstyle=line]{\mydatazb}
\dataplot[plotstyle=dots]{\mydatazb}

\savedata{\mydatazc}[
{{20., 4.99995}, {19.0163, 3.67823}, {17.6191, 2.80501}, {16.,
  2.5}, {14.3809, 2.80501}, {12.9837, 3.67823}, {12., 4.99995}}
]
\dataplot[plotstyle=line]{\mydatazc}
\dataplot[plotstyle=dots]{\mydatazc}

\rput(16,3.2){$z_4$}
\rput(16,4.7){$z_3$}
\rput(16,6.7){$z_2$}
\rput(16,8.2){$z_1$}

\psline(6,6)(10.5,6)(12,5)(10.5,4)(7.5,4)
\multirput(6,6)(1.5,0){4}{\psdot(0,0)}
\multirput(7.5,4)(1.5,0){3}{\psdot(0,0)}

\psline(27.5,7)(21.5,7)(20,5)(21.5,3)(23,3)
\psline(20,5)(24.5,5)
\multirput(21.5,7)(1.5,0){5}{\psdot(0,0)}
\multirput(21.5,5)(1.5,0){3}{\psdot(0,0)}
\multirput(21.5,3)(1.5,0){2}{\psdot(0,0)}

\rput(11.1,3.2){$P$}
\psline[linecolor=gray]{->}(11.3,3.6)(11.8,4.6)
\rput(20.45,3.2){$Q$}
\psline[linecolor=gray]{->}(20.35,3.6)(20.1,4.6)

\rput(8.25,6.6){$x_1$}
\rput(8.25,4.6){$x_2$}

\rput(23.75,7.6){$y_1$}
\rput(23.75,5.6){$y_2$}
\rput(23.75,3.6){$y_3$}

\end{pspicture}
\caption{A graph from Theorem \ref{thmxyz}}
\label{xyz}
\end{figure}

We next consider a family of graphs where two vertices $P$ and $Q$ may have high degree and the remaining vertices have degree $\lqs 2$. Suppose there are $k$ paths of lengths $z_1, \dots, z_k \gqs 1$ linking $P$ and $Q$. We also have $m$ paths from $P$ of lengths $x_1 \gqs x_2 \gqs \cdots \gqs x_m \gqs 0$ and $n$ paths from $Q$ of lengths $y_1 \gqs y_2 \gqs \cdots \gqs y_n \gqs 0$ as shown in Figure \ref{xyz}. Similarly to \eqref{xsa}, put
\begin{equation*}
X:=\sum_{i=1}^m x_i, \qquad \hat{X}:= \sum_{i=1}^m (-1)^{i+1} x_i, \qquad Y:=\sum_{i=1}^n y_i, \qquad \hat{Y}:= \sum_{i=1}^n (-1)^{i+1} y_i, \qquad Z:=\sum_{i=1}^k z_i.
\end{equation*}

\begin{theorem} \label{thmxyz}
Let $G$ be a member of the above family of graphs involving paths linking to $P$ and $Q$. With the defined notation we have
\begin{equation*}
  \n(G)= \left\{
           \begin{array}{ll}
             0 & \hbox{for $k$ even, $Z$ odd and $\hat{X}+\hat{Y}=0$;} \\
             4 & \hbox{for $k$ even, $Z$ odd and $\hat{X}+\hat{Y}=1$;} \\
             \f(G) & \hbox{otherwise.}
           \end{array}
         \right.
\end{equation*}
\end{theorem}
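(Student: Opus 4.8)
The plan is to induct on $|V(G)|+|E(G)|$, in the spirit of the proof of Theorem~\ref{trro}. First I would record the parity function explicitly. The $k$ linking paths contribute $Z-k$ internal vertices and $Z$ edges, and the paths at $P$ and $Q$ contribute $X+Y$ vertices and $X+Y$ edges, so $|V(G)|=2+Z-k+X+Y$ and $|E(G)|=X+Y+Z$; hence
$$
\f(G)=(X+Y+Z+k)_{(2)}+2\,(X+Y+Z)_{(2)}.
$$
Since $\hat X\equiv X$ and $\hat Y\equiv Y\pmod 2$, the first special row ($k$ even, $Z$ odd, $\hat X+\hat Y=0$) has $\f(G)=3$ and the second ($\hat X+\hat Y=1$) has $\f(G)=0$, so in both rows $\n(G)\neq\f(G)$, matching the anomalous odd-cycle behaviour signalled by ``$k$ even and $Z$ odd''.

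Next I would classify the moves. Deleting a hub $P$ or $Q$ destroys all cycles and leaves a disjoint union of paths, a bipartite forest whose value is $\f$ by Proposition~\ref{par}. Cutting one of the $k$ linking paths (at an edge or interior vertex) turns it into a $P$-path and a $Q$-path, lowering $k$ by one and flipping its parity; the outcome is again a member of the family, to which induction applies. Every other move alters a single path at $P$ or $Q$, so it preserves the parities of $k$ and $Z$ and yields a smaller family graph $H_0$ together with at most one detached path $D$, whence $\n(H)=\n(H_0)\oplus\f(D)$ by \eqref{oplus} and Proposition~\ref{par}. The one elementary fact I would isolate is that a nonempty path always has $\f\in\{1,2\}$ (its vertex and edge counts differ in parity), never $0$.

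With this machinery the ``otherwise'' cases ($k$ odd, or $Z$ even, or $\hat X+\hat Y\gqs 2$) go through as in cases (iii)--(vi) of Theorem~\ref{trro}: by induction every move has value $\n(H_0)\oplus\f(D)$ with $\n(H_0)\in\{0,1,2,3,4\}$, a parity count rules out $\f(G)$, and the smaller targets for the $\mex$ come from cutting a linking edge or vertex and from deleting an odd-degree hub. The first special row is the cleanest: when $\hat X+\hat Y=0$ we have $\hat X=\hat Y=0$, so the paths at $P$ and $Q$ cancel in pairs and the symmetry lemma reduces $G$ to the pure linking graph of Proposition~\ref{paths} with $k$ even, giving $\n(G)=0$; independently one checks every move is nonzero, using that $\deg P$ and $\deg Q$ are then both even.

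The main obstacle is the row $k$ even, $Z$ odd, $\hat X+\hat Y=1$, where $\n(G)=4$. Exhibiting $0,1,2,3$ is routine: shortening the appropriate path end forces $\hat X+\hat Y\to 0$ and lands in the value-$0$ row just treated; cutting a linking interior vertex and a linking edge give positions in the ``otherwise'' row (now $k$ odd) with $\f=1$ and $\f=2$; and an odd-degree hub—or, if both hubs are even, a path shortening that sends $\hat X+\hat Y\to 2$—supplies $3$. The delicate point is that \emph{no} move produces $4$. Writing a path move as $\n(H)=\n(H_0)\oplus\f(D)$, induction gives $\n(H_0)\in\{0,3,4\}$, the value $4$ occurring only when $H_0$ again has $\hat X+\hat Y=1$, and $\f(D)\in\{0,1,2\}$. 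The only way to reach $4$ would be $\n(H_0)=4$ with $D$ empty; but the sole move detaching nothing is the removal of a path-leaf, which lowers $X+Y$ by one and so flips $\hat X+\hat Y$ off the odd value $1$, forcing $\n(H_0)\in\{0,3\}$. Every detaching path move instead contributes $\f(D)\in\{1,2\}$, sending a value-$4$ remainder to $5$ or $6$ and a value-$0$ or value-$3$ remainder into $\{1,2,3\}$; and the hub and linking moves give only $\f\lqs 3$. Hence $4$ is never hit and $\n(G)=\mex\{0,1,2,3\}=4$. The real labour of the proof is exactly this bookkeeping: for each of the three move types, tracking how the triple (parity of $k$, parity of $Z$, value of $\hat X+\hat Y$) transforms, and combining it with the observation $\f(D)\in\{1,2\}$ to pin down precisely which nim-values occur.
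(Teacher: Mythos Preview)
Your proposal is correct and follows essentially the same approach as the paper: induction with a case split on the parities of $k$, $Z$ and the value of $\hat X+\hat Y$, classifying moves according to whether they touch a hub, a linking path, or an attached path, and then chasing parities to locate or exclude the needed nim-values. Your shortcut for the $\hat X+\hat Y=0$ row via the symmetry lemma and Proposition~\ref{paths} is a pleasant streamlining the paper does not use (it verifies that row directly), and your organization of the $\hat X+\hat Y=1$ exclusion around whether the detached piece $D$ is empty is equivalent to the paper's observation that the only moves keeping $\hat X+\hat Y=1$ remove a degree-$2$ path vertex and hence contribute $4\oplus 1=5$.
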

\begin{proof}
We argue similarly to the proof of Theorem \ref{trro} and use induction on $(k,X+Y)$ ordered lexicographically.  The result is true in the base cases of $k=0$ (so that $Z=0$) and $k=1$ since $G$ is then bipartite. Hence we assume $k\gqs 2$. If $z_1=\cdots=z_k=1$ then we have a multiple edge which simplifies to a single edge or no edge as discussed after Lemma \ref{twin}. Therefore we may assume there exists a $z_i$ with $z_i\gqs 2$.
Also note that
\begin{equation*}
  \f(G)=(X+Y+Z+k)_{(2)}+2((X+Y+Z)_{(2)}).
\end{equation*}
The following cases establish the  argument.
\begin{description}
  \item[{\it Case (i).}] Suppose $k$ is even, $Z$ is odd and $\hat{X}+\hat{Y}=0$. Then $X,$ $Y,$ $\deg P$ and $\deg Q$ are all even and $\f(G)=3$. To show that $\n(G)=0$ we need to prove that all moves $H$ have $\n(H)\neq 0$. Removing $P$, $Q$ or  an edge or vertex between $P$ and $Q$ gives $H$ with $\n(H)=\f(H)$. But for these moves $\f(H)\neq 0$ since we may only get $0$ by removing an odd degree vertex. If $H$ is a move that deletes a vertex or edge from a path of $G$ not  between $P$ and $Q$ then clearly $\n(H) \neq 0$ if  $\hat{X}+\hat{Y}\to 1$. The last option is that $H$ makes $\hat{X}+\hat{Y}$  greater than $1$ so that $\n(H)=\f(H)$. If $\f(H)= 0$ then $H$ removes a vertex of odd degree. However, the only path move that does this removes a leaf vertex and has  $\hat{X}+\hat{Y}\to 1$. Hence $\n(H)\neq 0$ in this option.

  \item[{\it Case (ii).}]  Suppose $k$ is even, $Z$ is odd and $\hat{X}+\hat{Y}=1$. Then  $X+Y$ is odd, $\f(G)=0$ and we want to show that $\n(G)=4$.
First we prove that moves with nim-values $0,$ $1,$ $2$ and $3$ exist. Deleting a vertex or edge between $P$ and $Q$ gives  nim-values $1$ and $2$. With $\hat{X}+\hat{Y}=1$ we must have $\hat{X}=1$ or $\hat{Y}=1$. If $\hat{X}=1$, there exists $r$ such that $x_r=1+x_{r+1}$  (with $x_{r+1}$ possibly $0$). Removing the vertex at the end of the path of length $x_r$ makes $\hat{X}\to 0$, giving nim-value $0$. If $\deg P$ is odd then removing $P$ is a move with nim-value $3$. Otherwise $\deg P$ is even implying there exists $r$  such that $x_r=1+x_{r+1}$ and $x_{r+1} \gqs 1$. Removing the vertex at the end of the path of length $x_{r+1}$ makes $\hat{X} \to 2$ and this move has nim-value $3$. The same argument works if $\hat{Y}=1$.

Next we show that all moves $H$ have $\n(H)\neq 4$. The only possible moves with nim-value $\gqs 4$ have $\hat{X}+\hat{Y}$ remaining as $1$. If $\hat{X}=1$ and $x_r=1+x_{r+1}$ then the only move that does this has $x_r \to x_r-2$ by removing a degree $2$ vertex. But this move has nim-value $4\oplus 1 =5$. We have the same argument when $\hat{Y}=1$.

  \item[{\it Case (iii).}] Suppose $k$ is odd and $X+Y+Z$ is even.  Then   $\f(G)=1$ and we want to show that $\n(G)=1$. To find a move with nim-value $0$ we  may remove a degree $2$ vertex between $P$ and $Q$ if $Z$ is even. Now assume $Z$ is odd. Removing a degree $2$ vertex on one of the paths not between $P$ and $Q$ gives nim-value $0$. We cannot do this if all $x_i, y_i$ are $\lqs 1$. Since one of $X$ or $Y$ is odd, it follows that the degree of $P$ or $Q$ must be even if $x_i, y_i$ are $\lqs 1$. Removing this even degree vertex then gives the  move with nim-value $0$.
  If $H$ is any move then  $\n(H)=0$, $4$ or $\f(H)$ and not equal to $1$.

\item[{\it Case (iv).}] Suppose $k$ is odd and $X+Y+Z$ is odd.  Then   $\f(G)=2$ and we want to show that $\n(G)=2$. If any of $x_i, y_i$ are $\gqs 1$ then removing the end edge and vertex on this path gives nim-values $0$ and $1$ respectively. Otherwise, $X=Y=0$ and $Z,$ $\deg P$ are odd. Removing $P$ gives nim-value $1$ and removing a central edge from a path with $z_i$ odd gives nim-value $0$. If $H$ is any move then $\n(H)$ is $0$, $4$ or $\f(H)$ and  not equal to $2$.

\item[{\it Case (v).}] Suppose $k$ is even,  $X+Y+Z$ is odd and $\hat{X}+\hat{Y}\gqs 2$. Then   $\f(G)=3$ and we want to show that $\n(G)=3$. Removing a cycle edge and vertex gives nim-values $1$ and $2$ respectively. If $\deg P$ is odd the removing it gives nim-value $0$. Otherwise, the largest $r$ for which $x_r$ is positive is even. Removing the vertex at the end of this path increases $\hat{X}$ and therefore this move has nim-value $0$. We have shown that moves with nim-values $0,$ $1$  and $2$ exist.

    It remains to show that all moves $H$ have $\n(H)\neq 3$. By induction we have $\n(H)=0\oplus t$ or $4\oplus t$ or $\f(H)$ for $t=0,$ $1$ or $2$, the nim-value of the disconnected path. It follows that $\n(H)\neq 3$.

\item[{\it Case (vi).}] Suppose $k$ is even,  $X+Y+Z$ is even and $\hat{X}+\hat{Y}\gqs 2$.  Then   $\f(G)=0$ and we want to show that $\n(G)=0$. This is true if all moves $H$ have $\n(H)\neq 0$. As in the previous case, $\n(H)=0\oplus t$ or $4\oplus t$ or $\f(H)$ for $t=0,$ $1$ or $2$. The only way to obtain $\n(H)=0$ is if a vertex or edge is removed so that $\hat{X}+\hat{Y} \to 0$ and the disconnected path has nim-value $t=0$. This is not possible for $\hat{X}+\hat{Y}\gqs 2$. \qedhere
\end{description}
\end{proof}

Propositions \ref{oddcyc} and \ref{paths}  follow as special cases of Theorems \ref{trro} and \ref{thmxyz} respectively.
Note that $\hat{X}=1$ in Theorem \ref{trro} and $\hat{X}+\hat{Y}=1$ in Theorem \ref{thmxyz} exactly when there is a single telescoping vertex of odd degree. So these theorems fit a similar pattern to the results in Section \ref{1cyc} and could be part of a larger encompassing theory.

 We consider one more family of graphs in this section.

\begin{theorem} \label{kngen}
Let $K_n(m)$ be the complete graph $K_n$ with a loop attached to $m$ different vertices. Then
\begin{equation*}
  \n(K_n(m)) = (m+n)_{(3)}.
\end{equation*}
\end{theorem}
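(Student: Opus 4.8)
The plan is to argue by induction on $n+m$, the total number of vertices and loops, the base case being the empty graph $K_0(0)$ with nim-value $0$. Throughout write $s:=(m+n)_{(3)}$ and recall from \eqref{mex} that $\n(K_n(m))$ is the $\mex$ of the nim-values of all moves. The moves fall into six types, according to whether we delete a loop, a looped vertex, an unlooped vertex, or a (non-loop) edge whose two endpoints are both looped, both unlooped, or of mixed loop-status. My first step is to evaluate the five ``easy'' types with the symmetry lemma, exactly as in the proof of \eqref{kn}.

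For the three vertex and loop deletions the results are immediate: deleting a loop gives $K_n(m-1)$, deleting a looped vertex gives $K_{n-1}(m-1)$, and deleting an unlooped vertex gives $K_{n-1}(m)$, with nim-values $(m+n-1)_{(3)}$, $(m+n-2)_{(3)}$ and $(m+n-1)_{(3)}$ by the induction hypothesis. For an edge $uv$ whose endpoints have the same loop-status, let $\tau$ be the involution swapping $u$ and $v$ and fixing every other vertex. Since $u$ and $v$ then carry identical loops, $\tau$ is an automorphism of the edge-deleted graph, and the removed edge guarantees that $u$ and $v$ are non-adjacent, so Lemma \ref{twin} applies. The fixed subgraph is $K_{n-2}(m)$ when both endpoints are unlooped and $K_{n-2}(m-2)$ when both are looped, giving nim-values $(m+n-2)_{(3)}$ and $(m+n-4)_{(3)}=(m+n-1)_{(3)}$. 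Reducing these five values modulo $3$, each is either $(s-1)_{(3)}$ or $(s+1)_{(3)}$; in particular none equals $s$, and together (whenever the corresponding move exists) they realise both residues of $\{0,1,2\}\setminus\{s\}$.

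The main obstacle is the sixth type: removing an edge $uv$ with $u$ looped and $v$ unlooped. Here the swap of $u$ and $v$ is no longer an automorphism, and in fact $G':=K_n(m)-uv$ admits no nontrivial application of Lemma \ref{twin} at all, since $\{u,v\}$ is its only non-adjacent pair. I will not compute $\n(G')$ exactly; it suffices to show $\n(G')\gqs 3$, so that this move also avoids the value $s\in\{0,1,2\}$. To see this I exhibit three moves from $G'$: deleting $v$ leaves $K_{n-1}(m)$, deleting $u$ leaves $K_{n-1}(m-1)$, and deleting the loop at $u$ produces $K_n(m-1)-uv$ with both endpoints now unlooped, which by the symmetry argument of the previous paragraph equals $K_{n-2}(m-1)$. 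These three graphs have strictly smaller $n+m$, so by the induction hypothesis their nim-values are $(m+n-1)_{(3)}$, $(m+n-2)_{(3)}$ and $(m+n-3)_{(3)}=s$, three consecutive residues and hence all of $\{0,1,2\}$. Therefore the $\mex$ over the moves of $G'$ is at least $3$, as required.

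It remains to assemble the $\mex$. I have shown that every move of $K_n(m)$ has nim-value in $\{(s-1)_{(3)},(s+1)_{(3)}\}\cup\{3,4,\dots\}$, so no move attains $s$. To finish I check that all residues below $s$ are attained. A move of value $(s-1)_{(3)}$ exists unless $n=m=0$ (delete a loop if $m\gqs 1$, or an unlooped vertex if $m<n$), which covers the cases $s=1$ and $s=2$; and when $s=2$ a move of value $(s+1)_{(3)}=0$ also exists, since $m+n\gqs 2$ then forces either $m\gqs 1$ (delete a looped vertex) or $m=0$ with $n\gqs 2$ (delete an edge between two unlooped vertices). Hence the $\mex$ equals exactly $s=(m+n)_{(3)}$, completing the induction.
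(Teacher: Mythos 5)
Your proof is correct, but it takes a genuinely different route from the paper's. The paper's central device is an \emph{extension} of the symmetry lemma: for $0<m<n$ it picks adjacent vertices $v,v'$ with exactly one loop between them and has the second player pair the edge $vv'$ with the loop at $v$ (and mirror everything else under the swap $v\leftrightarrow v'$), which collapses this entire case to the single identity $\n(K_n(m))=\n(K_{n-2}(m-1))$ with no $\mex$ computation at all; the same pairing handles loop removal in the remaining case $m=n$, and $m=0$ is just \eqref{kn}. You instead compute the $\mex$ over all move types directly, and the mixed edge (one looped endpoint, one unlooped) --- precisely the configuration the paper's pairing trick is built to neutralize --- is handled by a lower bound: you exhibit three options of the edge-deleted position with values $0,1,2$, so its nim-value is at least $3$ and cannot collide with the target $(m+n)_{(3)}\lqs 2$. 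Your approach is more elementary, using only Lemma \ref{twin} as stated plus the induction hypothesis, and it yields the full spectrum of option values; the paper's is shorter and isolates a reusable strategy-stealing idea (pairing a loop with an edge) that it needs anyway for the $m=n$ case. One small caveat that applies equally to both arguments: condition (ii) of Lemma \ref{twin}, read literally, is violated by a loop at a $\tau$-fixed vertex, but the intended (and used) hypothesis is that $v$ and $\tau(v)$ are non-adjacent only when $\tau(v)\neq v$, and your applications respect that.
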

\begin{proof}
 We use induction on $n$ with the $n=0,$  $1$ cases  easily verified. Assume $n\gqs 2$. If $m=0$ then we just have the complete graph and the theorem follows by \eqref{kn}. If $0 < m < n$ then $K_n(m)$ contains two vertices $v$ and $v'$ connected by an edge such that exactly one of them has a single loop attached.

To proceed we need the following extension of  the symmetry lemma. Suppose that $\tau:G\to G$ satisfies the conditions of Lemma \ref{twin} with $u\neq \tau(u)$ for vertex $u$. Let $G^*$ be $G$ with an edge $e$ added between $u$ and $\tau(u)$ and a loop $l$ added to either vertex. The proof of Lemma \ref{twin} goes through if we respond to $e$ with $l$ and vice versa. This proves $\n(G^*)=\n(G^\tau)$.

Applying the above argument, where $\tau$ maps $v \to v'$ and fixes the remaining vertices, shows that
$$
\n(K_n(m))=\n(K_{n-2}(m-1))=(m+n-3)_{(3)}=(m+n)_{(3)}
$$
by induction. In the final case, $m=n$ and all vertices of $K_n(n)$ have a loop attached. The three possible moves from this position are to:
\begin{enumerate}
  \item  remove a loop and get a graph with nim-value equal to $\n(K_{n-2}(n-2))$ by the above extension to the symmetry lemma,
  \item  remove an edge and get a graph with nim-value equal to $\n(K_{n-2}(n-2))$ by the symmetry lemma,
  \item  remove a vertex and get $K_{n-1}(n-1)$.
\end{enumerate}
Therefore
\begin{align*}
  \n(K_n(n)) & = \mex\bigl(\{\n(K_{n-2}(n-2)),\n(K_{n-1}(n-1))\}\bigr) \\
   & = \mex\bigl(\{(2n-4)_{(3)},(2n-2)_{(3)}\}\bigr)\\
 & = \mex\bigl(\{(2n+1)_{(3)},(2n+2)_{(3)}\}\bigr) = (2n)_{(3)}. \qedhere
\end{align*}
\end{proof}

\section{Wheel graphs and subgraphs} \label{wh}

As we saw in the introduction,  the  wheel graph $W_n$
is constructed by joining a central hub vertex to each vertex of the cycle graph $C_n$.  We will later need the {\em fan graph} $F_n$ and also $F^*_n$ which we may call a {\em fan with a handle}. Construct $F_n$ by removing a rim edge of $W_n$ and construct $F^*_n$  by removing two adjacent rim edges of $W_n$.
Examples are shown in Figure \ref{fans}.

\SpecialCoor
\psset{griddots=5,subgriddiv=0,gridlabels=0pt}
\psset{xunit=0.5cm, yunit=0.5cm, runit=0.5cm}
\psset{linewidth=1pt}
\psset{dotsize=5pt 0,dotstyle=*}

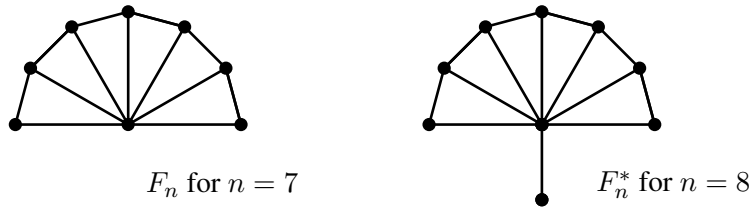
\begin{figure}[ht]
\centering
\begin{pspicture}(0,2.5)(21,8.5) 

\psset{arrowscale=2,arrowinset=0.5}


\savedata{\mydata}[
{{8., 5.}, {7.59808, 6.5}, {6.5, 7.59808}, {5., 8.}, {3.5,
  7.59808}, {2.40192, 6.5}, {2., 5.}}
]
\dataplot[plotstyle=line]{\mydata}
\dataplot[plotstyle=dots]{\mydata}
\psdots(5,5)(16,3)

\savedata{\mydataww}[
{{5,5},{8., 5.}, {7.59808, 6.5}, {5,5},{6.5, 7.59808}, {5., 8.}, {5,5},{3.5,
  7.59808}, {2.40192, 6.5}, {5,5},{2., 5.}}
]
\dataplot[plotstyle=line]{\mydataww}

\savedata{\mydataza}[
{{19., 5.}, {18.5981, 6.5}, {17.5, 7.59808}, {16., 8.}, {14.5,
  7.59808}, {13.4019, 6.5}, {13., 5.}}
]
\dataplot[plotstyle=line]{\mydataza}

\savedata{\mydataz}[
{{16,3},{16,5}, {19., 5.}, {18.5981, 6.5}, {16,5}, {17.5, 7.59808}, {16., 8.}, {16,5}, {14.5,
  7.59808}, {13.4019, 6.5}, {16,5}, {13., 5.}}
]
\dataplot[plotstyle=line]{\mydataz}
\dataplot[plotstyle=dots]{\mydataz}

\rput(7.5,3.4){$F_n$ for $n=7$}
\rput(19.5,3.4){$F^*_n$ for $n=8$}

\end{pspicture}
\caption{Some wheel subgraphs -- fans denoted $F_n$  and fans with a handle  denoted $F^*_n$}
\label{fans}
\end{figure}

The wheel graph $W_n$ for $n$ even is easily seen, with the symmetry lemma, to have nim-value $1$. This follows by letting $\tau$ fix a diameter and reflecting the graph from one side of the diameter to the other. The fixed diameter is a path of length $2$ with nim-value $1$. Alternatively, $\tau$ could fix the central hub  of $W_n$ and send each vertex and edge  to the opposite side. The nim-value of a single vertex is again $1$.
These techniques do not work for $W_n$ with $n$ odd since a short argument shows that any $\tau$ satisfying the conditions of Lemma \ref{twin} must be the identity  on $W_n$.

In a computer calculation we have found the nim-values of $W_n$ and all its subgraphs for $n\lqs 14$.
This proves directly that $\n(W_n)=1$ for $3\lqs n\lqs 14$. It also reveals interesting patterns that we describe throughout this section.

To prove $\n(W_n)=1$ for  $n>14$, we consider the graph made up of $W_n$ and an isolated vertex and want to show that the second player has a winning strategy. Clearly, removing the central hub vertex of $W_n$ has the reply of removing the isolated  vertex and vice versa. Also deleting a vertex on the rim of the wheel has the response of deleting the opposite edge and vice versa; the remaining graph is equivalent to two isolated vertices by symmetry.

\SpecialCoor
\psset{griddots=5,subgriddiv=0,gridlabels=0pt}
\psset{xunit=0.5cm, yunit=0.5cm, runit=0.5cm}
\psset{linewidth=1pt}
\psset{dotsize=5pt 0,dotstyle=*}

\begin{figure}[ht]
\centering
\begin{pspicture}(0,-0.4)(30,9) 

\psset{arrowscale=2,arrowinset=0.5}


\psline[linestyle=dashed,linecolor=red](5,5)(6.92836, 2.70187)
\psline[linestyle=dashed,linecolor=red](5,5)(7.95442, 4.47906)
\psline[linestyle=dashed,linecolor=red](5,5)(7.59808, 6.5)
\psline[linestyle=dashed,linecolor=red](5,5)(3.97394, 7.81908)
\psline[linestyle=dashed,linecolor=red](5,5)(2.40192, 6.5)
\psline[linestyle=dashed,linecolor=red](5,5)(2.04558, 4.47906)
\psline(5,2)(5,5)(3.07164, 2.70187)
\savedata{\mydata}[
{{5., 2.}, {6.92836, 2.70187}, {7.95442, 4.47906}, {7.59808,
  6.5}, {6.02606, 7.81908}, {3.97394, 7.81908}, {2.40192,
  6.5}, {2.04558, 4.47906}, {3.07164, 2.70187}, {5., 2.}}
]
\dataplot[plotstyle=line]{\mydata}
\dataplot[plotstyle=dots]{\mydata}
\psdot(5,5)
\pscircle[fillstyle=solid,fillcolor=red](5,2){0.22}
\pscircle[fillstyle=solid,fillcolor=red](3.07164, 2.70187){0.22}

\savedata{\mydataww}[
{{28.6219, 2.47624}, {27,5}, {29.7289, 3.75375}, {29.9695, 5.42694},{27,5},  {29.2672,
   6.96458}, {27.8452, 7.87848}, {26.1548, 7.87848}, {27,5}, {24.7328,
  6.96458}, {24.0305, 5.42694},{27,5},  {24.2711, 3.75375}, {25.3781,
  2.47624},{27,5} }
]
\dataplot[plotstyle=line]{\mydataww}

\savedata{\mydataw}[
{{28.6219, 2.47624}, {29.7289, 3.75375}, {29.9695, 5.42694}, {29.2672,
   6.96458}, {27.8452, 7.87848}, {26.1548, 7.87848}, {24.7328,
  6.96458}, {24.0305, 5.42694}, {24.2711, 3.75375}, {25.3781,
  2.47624}}
]
\dataplot[plotstyle=line]{\mydataw}
\psline[linecolor=white, linewidth=2pt](29.9695, 5.42694)(29.2672,  6.96458)
\psline[linestyle=dashed](29.9695, 5.42694)(29.2672,  6.96458)
\psline[linecolor=white, linewidth=2pt](27.8452, 7.87848)(26.1548, 7.87848)
\psline[linestyle=dashed](27.8452, 7.87848)(26.1548, 7.87848)
\dataplot[plotstyle=dots]{\mydataw}
\psdot(27,5)

\rput(28.8, 1.7){$v(4k+2)$}
\rput(25.1, 1.7){$v(1)$}
\rput(29, 8.5){$v(2k+2)$}
\rput(30.2, 6.3){$e_r$}
\rput(27, 7.4){$e_l$}
\rput(27, 4.2){$c$}

\savedata{\mydataza}[
{{17.9284, 2.70187}, {16,5}, {18.9544, 4.47906}, {18.5981, 6.5},  {16,5},
   {14.9739, 7.81908}, {13.4019, 6.5}, {16,5},  {13.0456,
  4.47906}, {14.0716, 2.70187} {16,5}}
]
\dataplot[plotstyle=line]{\mydataza}

\savedata{\mydataz}[
{{17.9284, 2.70187}, {18.9544, 4.47906}, {18.5981, 6.5}, {17.0261,
  7.81908}, {14.9739, 7.81908}, {13.4019, 6.5}, {13.0456,
  4.47906}, {14.0716, 2.70187}}
]
\dataplot[plotstyle=line]{\mydataz}
\psline[linecolor=white, linewidth=2pt](18.5981, 6.5)(17.0261, 7.81908)
\psline[linestyle=dashed](18.5981, 6.5)(17.0261, 7.81908)
\psline[linecolor=white, linewidth=2pt](14.9739, 7.81908)(13.4019, 6.5)
\psline[linestyle=dashed](14.9739, 7.81908)(13.4019, 6.5)
\dataplot[plotstyle=dots]{\mydataz}

\rput(17.8, 1.8){$v(4k)$}
\rput(14.1, 1.8){$v(1)$}
\rput(18, 8.5){$v(2k+1)$}
\rput(18.2, 7.5){$e_r$}
\rput(13.8, 7.5){$e_l$}
\rput(16, 4.2){$c$}

\psdots(0.5,5)(11.5,5)(22.5,5)(16,5)

\rput(4.25,0.5){$K_1 \cup W_9$ less a spoke}
\rput(15.25,0.5){$Q_{n}$ for  $n=4k+1=9$}
\rput(26.25,0.5){$Q_{n}$ for  $n=4k+3=11$}

\end{pspicture}
\caption{Wheel subgraphs with labelling}
\label{1315}
\end{figure}
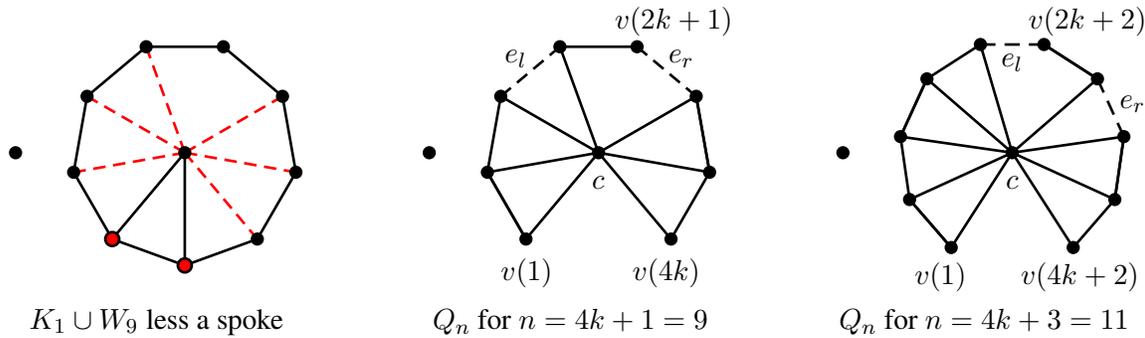

If the first player removes a spoke then the winning responses in $K_1 \cup W_n$, with $n=9$ for example, are highlighted on the left in Figure \ref{1315}: six spokes and two vertices. This same pattern appears for all odd $n \lqs 13$. Removing one of the indicated vertices  leaves the graph labelled $Q_{9}$ in the middle of  Figure \ref{1315}. In general we let $Q_n$ be the graph consisting of $K_1$ and $W_n$  with one spoke and one of the two opposite vertices deleted. Label the vertices of $Q_n$ as shown in Figure \ref{1315}, $v(1), \cdots ,v(n-1)$ with central hub vertex $c$. The missing spoke is between $c$ and $v((n+1)/2)$.

\begin{theorem} \label{wheel}
We have $\n(W_n)=1$ for all $n$ in the range $3\lqs n\lqs 25$.
\end{theorem}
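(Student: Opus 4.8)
The plan is to split on the parity and size of $n$. For every even $n$ the value $\n(W_n)=1$ already follows from the symmetry lemma (reflecting in a diameter, whose fixed part is a path of length $2$ of nim-value $1$), and for odd $n$ with $n\lqs 13$ it follows from the direct computation already carried out for $n\lqs 14$. Thus the whole content is the six cases $n\in\{15,17,19,21,23,25\}$, and for each I would prove $\n(W_n)=1$ by instead showing $\n(W_n\cup K_1)=0$; since $\n(K_1)=1$ and nim-values add with $\oplus$ under disjoint union by \eqref{oplus}, the two statements are equivalent. Concretely I would exhibit an explicit second-player winning strategy on $W_n\cup K_1$, which is exactly what $\n(W_n\cup K_1)=0$ asserts.

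The strategy is organised as a reply to each possible first move, always landing in a position of nim-value $0$ from which the second player then continues to win. The easy replies are pairings realised by Lemma \ref{twin}. If the first player deletes the hub (respectively the isolated vertex), the reply is to delete the isolated vertex (respectively the hub), leaving the cycle $C_n$ of nim-value $0$. If the first player deletes a rim vertex $v$, the reply is to delete the rim edge diametrically opposite $v$; since $n$ is odd this pairs the $n$ rim vertices bijectively with the $n$ rim edges, and the reflection $\tau$ in the axis through $v$ and the midpoint of that edge satisfies the hypotheses of Lemma \ref{twin} once both have been removed (the only adjacent mirror pair was that opposite edge, now gone). Its fixed subgraph is the hub together with the isolated vertex, i.e. two isolated vertices of total nim-value $1\oplus 1=0$. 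Deleting a rim edge is handled by the mirror reply, deleting the opposite rim vertex. By the rotational symmetry of $W_n$ all rim vertices, all rim edges and all spokes are equivalent, so this disposes of every first move except the deletion of a spoke.

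The spoke move is the one with no symmetric partner, and it is where $Q_n$ enters. After the first player removes the spoke at a rim vertex $u$, I would reply by deleting one of the two rim vertices opposite $u$; by construction this produces exactly the graph $Q_n$ of Figure \ref{1315}. It then remains to establish the single fact
\begin{equation*}
  \n(Q_n)=0 \qquad\text{for } n\in\{15,17,19,21,23,25\},
\end{equation*}
after which the proof is complete. This is the real obstacle. Unlike $W_n\cup K_1$, the graph $Q_n$ has no global reflection symmetry, since removing the spoke at $u$ together with one of the two vertices opposite $u$ destroys the axis through $u$, so I do not expect a one-line symmetry reduction: $\n(Q_n)$ must genuinely be computed.

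Accordingly, the hard part is to verify $\n(Q_n)=0$ by a game-tree search small enough to run up to $n=25$. The value of the preceding reduction is precisely that it isolates all of the unavoidable computation into this one nim-value: every other first move was settled by symmetry alone. To keep the search feasible I would, at each node, first reduce and simplify every option using cancellation and Lemma \ref{twin} before recursing, and I would organise the descendants through the two congruence classes $n\equiv 1$ and $n\equiv 3 \pmod 4$ depicted in Figure \ref{1315} (with their edges $e_l,e_r$ and centre $c$), which govern the residual near-symmetry of $Q_n$. Presenting the output of this pruned search as the verification of $\n(Q_n)=0$ for each of the six values of $n$ then completes the proof that $\n(W_n)=1$ throughout $3\lqs n\lqs 25$.
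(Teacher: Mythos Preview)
Your reduction of $\n(W_n)=1$ to $\n(Q_n)=0$, together with the pairings for the hub, the isolated vertex, rim vertices and rim edges, is exactly what the paper does. The divergence is entirely in how $\n(Q_n)=0$ is established for $n\in\{15,17,19,21,23,25\}$.

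You propose a pruned game-tree search on $Q_n$ itself. The paper does not do this; instead it repeats the same pairing idea one level down and gives an explicit second-player strategy inside $Q_n$. With the labelling of Figure~\ref{1315}, the pairs $K_1\leftrightarrow v(1)$, $c\leftrightarrow v(1)$, $v(i)\leftrightarrow v(n-i)$, and the analogous rim-edge pairs are all handled by Lemma~\ref{twin} together with the inductive hypothesis $\n(Q_m)=0$ for smaller odd $m$. The only first moves in $Q_n$ without a symmetric partner are again the spoke removals, and here the paper's reply is precisely the edge $e_l$ (for a spoke on the left) or $e_r$ (for a spoke on the right). After that reply the symmetry lemma collapses the position to $K_1$ together with an $F^*_m$ with two spokes missing, where $m\in\{2k+2,\,2k+4\}$; hence $m\lqs 14$ for every $n\lqs 25$. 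Those graphs are subgraphs of $W_m$ with $m\lqs 14$, so their nim-values are already in the table produced by the original computation.

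This second layer of strategy is the content of the proof: it converts the verification of $\n(Q_n)=0$ for $n$ up to $25$ into a lookup in the already-computed data for wheels of size at most $14$, rather than a fresh search on a graph with roughly twice as many vertices and edges. Your plan to search $Q_{25}$ directly, even with symmetry pruning, is exactly what the paper is engineered to avoid, and you give no argument that such a search is tractable. You mention $e_l$, $e_r$ and the $4k+1$/$4k+3$ split, but only as organisational labels; in the paper these edges are the actual winning replies, and the missing idea is that they simplify the position to a subgraph of a wheel of about half the size.
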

\begin{proof}
We already saw that $\n(W_n)=1$ for $n$ even and that $\n(W_n)=1$ for $3\lqs n\lqs 14$. This computation also shows that $\n(Q_n)=0$ for $n$ odd in the range $3\lqs n\lqs 13$. We prove the theorem by demonstrating that $\n(Q_n)=0$ for all odd $n$ in the range $15\lqs n\lqs 25$.

Suppose that $\n(Q_m)=0$ for all odd $m$ in the range $3\lqs m\lqs n-2$ and consider $Q_n$. We first look at the case $n=4k+1$ as shown in the middle of  Figure \ref{1315}. Let $e_l$ be the edge between $v(2k-1)$ and $v(2k)$. Let $e_r$ be the edge between $v(2k+1)$ and $v(2k+2)$. To show that $\n(Q_n)=0$ we look for winning responses to any moves of the first player. If $K_1$ is removed then the winning response is $v(1)$ since an application of the symmetry lemma shows the remaining graph has the same nim-value as two isolated vertices. Also deleting $v(1)$ is a winning response to deleting $K_1$. We may write this move/response pair as $K_1 \leftrightarrow v(1)$. Exercises with the symmetry lemma give the following pairs:  $c \leftrightarrow v(1),$ $v(2k) \leftrightarrow v(2k+1)$ and, since $\n(Q_m)=0$ for smaller $m$, $v(i) \leftrightarrow v(n-i)$ for $1\lqs i \lqs n-1$. For edge moves we have
$(v(2k),v(2k+1)) \leftrightarrow (c,v(1))$ and also  $(v(i),v(i+1)) \leftrightarrow (v(n-i-1),v(n-i))$  for $1\lqs i \lqs 2k-1$.

It remains to find winning replies to the first player removing a spoke. We choose the response of removing $e_l$ when any of the spokes on the left are removed: $(v(i),c)$ with  $1\lqs i \lqs 2k$. The resulting graph simplifies to $F^*_{2k+2}$ with two spokes missing (the edge connected to the degree one vertex must remain).  We choose the response of removing $e_r$ when any of the spokes on the right are removed: $(v(i),c)$ with  $2k+2\lqs i \lqs 4k$. The resulting graph also simplifies to $F^*_{2k+2}$ with two spokes missing.

For the case $n=4k+3$ we argue similarly, with the initial moves and responses using the same symmetries.  The spoke move responses are defined slightly differently with
$e_l$  the edge between $v(2k+1)$ and $v(2k+2)$, and  $e_r$  the edge between $v(2k+3)$ and $v(2k+4)$. We choose the response of removing $e_l$ when any of the spokes on the left are removed: $(v(i),c)$ with  $1\lqs i \lqs 2k+1$. The resulting graph simplifies to $F^*_{2k+2}$ with two spokes missing.  We choose the response of removing $e_r$ when any of the spokes on the right are removed: $(v(i),c)$ with  $2k+3\lqs i \lqs 4k+2$. The resulting graph simplifies to $F^*_{2k+4}$ with two spokes missing.

Let $F^{**}_m$ be $F^*_m$ with any two spokes missing. Since $F^{**}_m$ is a subgraph of $W_m$, our computation verifies that $\n(F^{**}_m \cup K_1) = 0$ (i.e. $\n(F^{**}_m) = 1$) for all even $m \lqs 14$. This shows that $\n(W_n)=1$ for all odd $n$ up to $n=23$ (requiring $F^{**}_{12}$ and $F^{**}_{14}$) and $n=25$ (requiring $F^{**}_{14}$).
\end{proof}

From the proof of Theorem \ref{wheel} we see that the following conjecture implies Conjecture \ref{conjw}, i.e. that $\n(W_n)=1$ for all $n$.

\begin{conj}{\rm (Even fans with handles and  two spokes removed.)}
Let the spokes of $F^*_n$ be all edges connected to the hub except the edge connected to the degree $1$ vertex. For all  even $n\gqs 4$ the nim-value of  $F^*_n$ with any two spokes removed is $1$.
\end{conj}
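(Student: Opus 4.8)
The plan is to prove the equivalent statement $\n(F^{**}_n)=1$ by strong induction on the even integer $n\gqs 4$, the base cases $n\lqs 14$ being exactly the computations already invoked in the proof of Theorem \ref{wheel}. Write $G=F^{**}_n$ for a fan-with-handle $F^*_n$ from which two spokes have been deleted, so that $G$ has a hub $c$, a rim path $p_1\cdots p_{n-1}$ carrying all but two of the spokes $cp_i$, and a handle vertex $h$ joined to $c$ alone. To conclude $\n(G)=1$ it suffices, by the mex rule \eqref{mex}, to exhibit one move to a position of nim-value $0$ and to forbid every move to a position of nim-value $1$. The move to $0$ is immediate and uniform in the choice of the two absent spokes: deleting the hub $c$ leaves the path $p_1\cdots p_{n-1}$ together with the now-isolated handle $h$, a forest on $n$ vertices and $n-2$ edges (removing spokes never touches a rim edge, so the path is always intact). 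For even $n$ both counts are even, hence $\f=0$ and Proposition \ref{par} gives nim-value $0$. For odd $n$ the same move yields $\f=3$, which is why the parity restriction in the conjecture is essential.

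The real content is ruling out nim-value $1$, where the moves fall into three groups. Deleting the hub has just been handled. Deleting a rim edge, a rim vertex, or one of the surviving spokes splits $G$ into a graph of the same general shape — a hub carrying one or two rim paths through some subset of spokes, still with the handle attached — but strictly smaller; I would try to pair such moves against one another with the symmetry lemma (Lemma \ref{twin}) whenever the two missing spokes leave a reflective symmetry, and otherwise feed them to the induction. Deleting the handle vertex $h$, or deleting the handle spoke $ch$ (which isolates $h$ and adds an $\oplus 1$ by \eqref{oplus}), produces a genuine fan $F_{n-1}$ with two spokes removed and no handle; to defeat both of these simultaneously one needs the companion fact that such a handle-less broken fan has nim-value at least $2$ (so that it is neither $1$ nor, after $\oplus 1$, equal to $1$).

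None of the derived graphs lies in the family $F^{**}_n$, so the induction cannot close on its own. I would therefore strengthen the inductive statement to the whole class of \emph{broken fans}: a hub, optionally with a handle, carrying a disjoint union of rim paths attached through arbitrary subsets of spokes, possibly together with isolated vertices. The plan is to guess, and then verify by the same combination of the mex rule and symmetry-lemma pairings, a nim-value rule for this enlarged class that reduces every in-class move to smaller members. Propositions \ref{oddcyc} and \ref{paths} and Theorems \ref{trro} and \ref{thmxyz} show that clean parity-driven formulas of exactly this flavour do exist for related single- and double-junction families, and they are the templates to imitate; induction on the total number of non-hub vertices and edges is the natural order.

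\textbf{The main obstacle} is precisely the control of these broken fans. Unlike the graphs of Sections \ref{1cyc} and \ref{many}, they contain many triangles (each $c$--$p_i$--$p_{i+1}$ with both spokes present), so Proposition \ref{par} and the one-odd-cycle machinery of Sections \ref{st}--\ref{1cyc} do not apply, and the sequences computed in Section \ref{wh} strongly suggest that their nim-values are \emph{not} governed by any simple parity of $(|V|,|E|)$. Isolating the correct invariant for the broken-fan class — or, alternatively, producing a symmetry-lemma pairing robust enough to survive the deletion of two arbitrary spokes — is the crux, and is presumably the reason the statement is recorded here as a conjecture rather than proved outright.
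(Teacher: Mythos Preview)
The statement you are attempting is recorded in the paper as a \emph{conjecture}, not a theorem; the paper offers no proof and indeed remarks only that it implies Conjecture~\ref{conjw}. So there is nothing to compare your attempt against, and your closing paragraph is exactly right: the reason it is a conjecture is that no one has found the invariant governing the broader broken-fan class that your induction would need.

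What you do establish is correct and worth keeping. Deleting the hub $c$ from any $F^{**}_n$ with $n$ even leaves the rim path on $n-1$ vertices together with the isolated handle vertex, a bipartite graph with $n$ vertices and $n-2$ edges, hence $\f=0$ and nim-value $0$ by Proposition~\ref{par}. This gives the required option of value $0$ uniformly, independent of which two spokes are missing, and already shows $\n(F^{**}_n)\gqs 1$ for even $n$.

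Your diagnosis of the obstruction is also accurate. The family $F^{**}_n$ is not closed under moves: deleting a rim edge, a rim vertex, a spoke, or the handle produces graphs outside the family, so strong induction on $n$ alone cannot close. Enlarging to all ``broken fans'' (hub with handle, several rim arcs, arbitrary spoke subsets) is the natural move-closed envelope, but these graphs carry many odd triangles and the data in Section~\ref{wh} already show their nim-values are not determined by $\f$ alone. Neither the one-odd-cycle machinery of Sections~\ref{st}--\ref{1cyc} nor the parity theorems of Section~\ref{many} applies. Finding the right strengthened hypothesis on this class---or a symmetry pairing that survives two arbitrary missing spokes---is the open problem, and your proposal correctly stops at that point rather than papering over it.
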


 Exploring the nim-values of the move options for the fans $F_n$ and the fans with handles $F^*_n$ reveals the following patterns for the given $n$ values up to $14$ and we conjecture they hold for all $n$.

\SpecialCoor
\psset{griddots=5,subgriddiv=0,gridlabels=0pt}
\psset{xunit=0.5cm, yunit=0.5cm, runit=0.5cm}
\psset{linewidth=1pt}
\psset{dotsize=5pt 0,dotstyle=*}

\begin{figure}[ht]
\centering
\begin{pspicture}(1,4)(28,10.5) 

\psset{arrowscale=2,arrowinset=0.5}


\savedata{\mydataww}[
{{5,5},{8., 5.}, {7.59808, 6.5}, {5,5},{6.5, 7.59808}, {5., 8.}, {5,5},{3.5,
  7.59808}, {2.40192, 6.5}, {5,5},{2., 5.}}
]
\dataplot[plotstyle=line]{\mydataww}
\psline[linecolor=white, linewidth=2pt](5,5)(5,8)
\psline[linecolor=orange](5,5)(5,8)

\savedata{\mydata}[
{{8., 5.}, {7.59808, 6.5}, {6.5, 7.59808}, {5., 8.}, {3.5,
  7.59808}, {2.40192, 6.5}, {2., 5.}, {5,5}}
]
\dataplot[plotstyle=line]{\mydata}
\dataplot[plotstyle=dots]{\mydata}


\savedata{\mydataza}[
{{17,5}, {21., 5.}, {20.6955, 6.53073},{17,5},  {19.8284, 7.82843}, {18.5307,
  8.69552},{17,5},  {17., 9.}, {15.4693, 8.69552},{17,5},  {14.1716,
  7.82843}, {13.3045, 6.53073}, {17,5}, {13., 5.}}
]
\dataplot[plotstyle=line]{\mydataza}
\psline[linecolor=white, linewidth=2pt](17,5)(17,9)
\psline[linecolor=orange](17,5)(17,9)

\savedata{\mydataz}[
{{17,5}, {21., 5.}, {20.6955, 6.53073}, {19.8284, 7.82843}, {18.5307,
  8.69552}, {17., 9.}, {15.4693, 8.69552}, {14.1716,
  7.82843}, {13.3045, 6.53073}, {13., 5.}}
]
\dataplot[plotstyle=line]{\mydataz}
\dataplot[plotstyle=dots]{\mydataz}

\uput{3.5}[30](5,5){$1$}
\uput{3.5}[60](5,5){$1$}
\uput{3.5}[90](5,5){$1$}
\uput{3.5}[120](5,5){$1$}
\uput{3.5}[150](5,5){$1$}
\uput{3.5}[0](5,5){$3$}
\uput{3.5}[180](5,5){$3$}
\uput{0.5}[270](5,5){$1$}


\psline(25,8.5)(28,8.5)
\rput(26.5,9){$4$}
\psline[linecolor=orange](25,7)(28,7)
\rput(26.5,7.5){$0$}
\rput(26.5,5.5){$\f=2$}
\rput(26.5,4.5){$\n=2$}

\uput{4.5}[22.5](17,5){$1$}
\uput{4.5}[45](17,5){$1$}
\uput{4.5}[67.5](17,5){$1$}
\uput{4.5}[90](17,5){$1$}
\uput{4.5}[112.5](17,5){$1$}
\uput{4.5}[135](17,5){$1$}
\uput{4.5}[157.5](17,5){$1$}
\uput{4.5}[180](17,5){$3$}
\uput{4.5}[0](17,5){$3$}
\uput{0.5}[270](17,5){$1$}

\end{pspicture}
\caption{Nim-values for the move options of $F_7$ and $F_9$}
\label{odd-hd}
\end{figure}
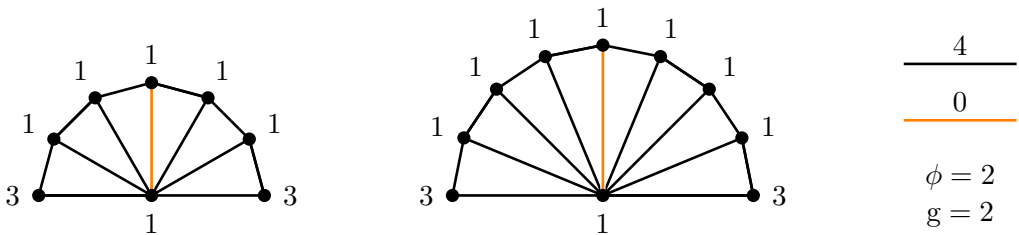

\SpecialCoor
\psset{griddots=5,subgriddiv=0,gridlabels=0pt}
\psset{xunit=0.5cm, yunit=0.5cm, runit=0.5cm}
\psset{linewidth=1pt}
\psset{dotsize=5pt 0,dotstyle=*}

\begin{figure}[ht]
\centering
\begin{pspicture}(1,4)(28,10.5) 

\psset{arrowscale=2,arrowinset=0.5}


\savedata{\mydataww}[
{{5, 5.},  {5.92705, 7.85317},{5, 5.}, {4.07295,
  7.85317}}
]
\dataplot[plotstyle=line]{\mydataww}

\psset{linecolor=orange}
\savedata{\mydataww}[
{{8., 5.}, {5, 5.}, {7.42705, 6.76336}, {5, 5.}, {2.57295, 6.76336}, {5, 5.},{2., 5.}}
]
\dataplot[plotstyle=line]{\mydataww}
\psset{linecolor=black}

\savedata{\mydata}[
{{8., 5.}, {7.42705, 6.76336}, {5.92705, 7.85317}, {4.07295,
  7.85317}, {2.57295, 6.76336}, {2., 5.}}
]
\dataplot[plotstyle=line]{\mydata}
\dataplot[plotstyle=dots]{\mydata}
\psdot(5,5)


\savedata{\mydataww}[
{{17.8901,
  8.89971}, {17,5}, {16.1099, 8.89971}}
]
\dataplot[plotstyle=line]{\mydataww}

\psset{linecolor=orange}
\savedata{\mydataww}[
{{21., 5.}, {17,5}, {20.6039, 6.73553}, {17,5}, {19.494, 8.12733}, {17,5}, {14.506, 8.12733}, {17,5}, {13.3961,
  6.73553},  {17,5},{13., 5.}}
]
\dataplot[plotstyle=line]{\mydataww}
\psset{linecolor=black}

\savedata{\mydata}[
{{21., 5.}, {20.6039, 6.73553}, {19.494, 8.12733}, {17.8901,
  8.89971}, {16.1099, 8.89971}, {14.506, 8.12733}, {13.3961,
  6.73553}, {13., 5.}}
]
\dataplot[plotstyle=line]{\mydata}
\dataplot[plotstyle=dots]{\mydata}
\psdot(17,5)

\uput{3.5}[36](5,5){$4$}
\uput{3.5}[72](5,5){$4$}
\uput{3.5}[108](5,5){$4$}
\uput{3.5}[144](5,5){$4$}

\uput{3.5}[0](5,5){$2$}
\uput{3.5}[180](5,5){$2$}
\uput{0.5}[270](5,5){$2$}

\psline(25,8.5)(28,8.5)
\rput(26.5,9){$1$}
\psline[linecolor=orange](25,7)(28,7)
\rput(26.5,7.5){$0$}
\rput(26.5,5.5){$\f=3$}
\rput(26.5,4.5){$\n=3$}

\uput{4.5}[25.7](17,5){$4$}
\uput{4.5}[51.4](17,5){$4$}
\uput{4.5}[77.1](17,5){$4$}
\uput{4.5}[102.9](17,5){$4$}
\uput{4.5}[128.6](17,5){$4$}
\uput{4.5}[154.3](17,5){$4$}

\uput{4.5}[180](17,5){$2$}
\uput{4.5}[0](17,5){$2$}
\uput{0.5}[270](17,5){$2$}

\end{pspicture}
\caption{Nim-values for the move options of $F_6$ and $F_8$}
\label{even-hd}
\end{figure}
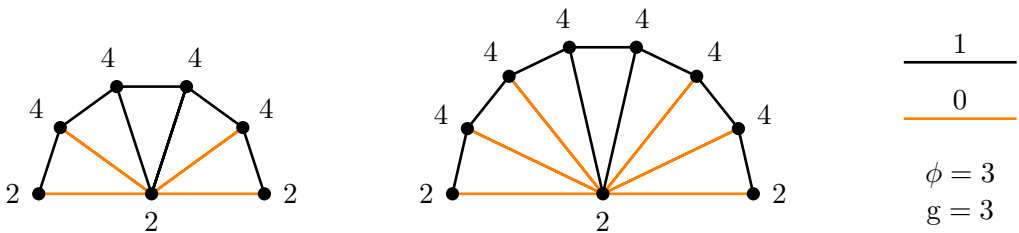

\begin{conj}{\rm (Fan options.)}
\begin{enumerate}
  \item For all  odd $n\gqs 7$ the nim-values of the options in the fan $F_n$ are as follows. All vertices have value $1$ except the degree $2$ vertices which have value $3$. All edges have value $4$ except the edge on the axis of symmetry with value $0$. See the examples in Figure  \ref{odd-hd}.
  \item For all  even $n\gqs 4$ the nim-values of the options in the fan $F_n$ are as follows. All vertices have value $4$ except the degree $2$ vertices and the central vertex which have value $2$. All edges on the rim along with the two spokes on each side of the axis of symmetry have value $1$. The remaining spokes have value $0$. Hence $\n(F_n)=3$ for $n$ even. See the examples in Figure  \ref{even-hd}.
\end{enumerate}
\end{conj}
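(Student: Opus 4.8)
The plan is to reduce the conjecture to computing the nim-value of each individual move option of $F_n$, and then to verify these values by a single induction on $n$ that treats the odd and even cases together. Label the hub $c$ and the rim path $v_1 - v_2 - \cdots - v_n$, so that the spokes are the edges $cv_i$. The options fall into five types, and the first observation is that the two parts of the conjecture feed each other: deleting a degree-$2$ rim endpoint $v_1$ (or $v_n$) leaves precisely $F_{n-1}$, while deleting the hub leaves the path on $n$ vertices. Using the tree formula $\n(T)=|E(T)|_{(2)}+1$ stated just after Proposition \ref{par}, the hub option has value $(n-1)_{(2)}+1$, which is $1$ for $n$ odd and $2$ for $n$ even, matching the claimed values for the central vertex. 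Granting inductively that $\n(F_{n-1})=2$ for $n$ even and $\n(F_{n-1})=3$ for $n$ odd then settles the degree-$2$ vertices, and taking the $\mex$ of all option values recovers $\n(F_n)=2$ (odd) or $3$ (even), i.e. $\n(F_n)=\f(F_n)$, closing the induction.

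Second, I would dispatch the two genuinely symmetric options directly with Lemma \ref{twin}. For $n$ odd, removing the axis spoke $cv_{(n+1)/2}$ leaves a graph invariant under the reflection $\tau\colon v_i\mapsto v_{n+1-i}$; since $\tau$ fixes only $c$ and $v_{(n+1)/2}$, which are now non-adjacent, and no other vertex is joined to its image, the symmetry lemma gives nim-value $1\oplus1=0$, as claimed. For $n$ even the analogous role is played by the middle rim edge $v_{n/2}v_{n/2+1}$: removing it leaves two copies of $F_{n/2}$ sharing the hub, and reflecting across the axis reduces this to the single fixed vertex $c$, giving value $1$.

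Third comes the bulk of the work. Deleting an internal rim vertex $v_i$, or any rim edge $v_iv_{i+1}$, produces a \emph{double fan} $DF_{a,b}$ consisting of two fans $F_a,F_b$ glued at the common hub (with $a+b=n-1$ in the vertex case and $a+b=n$ in the edge case, and both $a,b\gqs1$). The data suggest the clean formula
\begin{equation*}
  \n(DF_{a,b})=\begin{cases} 1 & \text{if $a+b$ is even,} \\ 4 & \text{if $a+b$ is odd,}\end{cases}
\end{equation*}
which I would establish by its own induction; the small cases agree, e.g. $DF_{1,1}$ is the path on three vertices with value $1$, and $DF_{1,2}$ is exactly the triangle-with-a-pendant-edge of Figure \ref{simex} with value $4$, while $DF_{a,a}$ collapses to the single vertex $c$ under reflection and so has value $1$. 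Substituting this formula covers the internal-vertex options ($a+b=n-1$) and the rim-edge options ($a+b=n$), and checking the parities produces exactly the conjectured pairs $1,4$ (odd $n$) and $4,1$ (even $n$).

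The hard part will be the remaining spoke options together with the inductive step for $DF_{a,b}$ itself, because both drag in auxiliary families with no apparent closed form. Removing a non-axis spoke leaves a \emph{broken-spoke fan}, for which I must prove value $4$ when $n$ is odd and values $1$ or $0$ (depending on the spoke) when $n$ is even; these possess no useful automorphism, so their values can only be obtained from the $\mex$ rule \eqref{mex} applied to their own options, which are doubly-broken fans and further degenerations. Likewise, the options of $DF_{a,b}$ include \emph{multi-fans} (several fans glued at one hub, equivalently $F_N$ with several rim edges deleted), and these do \emph{not} obey any simple parity law: already $\n(F_2)=0$ (a triangle) while $\n(F_4)=3$, so the value genuinely depends on the full partition and on the positions of the surviving spokes. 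Thus the real obstacle is closure: to run a self-contained induction one must enumerate and find formulas for the entire lattice of spoke-and-rim subgraphs of $F_n$, and the computations up to $n=14$ reveal regular but unproven patterns rather than a provable formula. Isolating an invariant (or a pairing strategy in the spirit of Proposition \ref{par} and Lemma \ref{twin}) that forces these auxiliary values is the missing ingredient, and is presumably why both this conjecture and Conjecture \ref{conjw} remain open.
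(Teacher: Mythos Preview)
The statement you were given is a \emph{conjecture}: the paper does not prove it, and indeed the surrounding text says explicitly that these patterns are observed for $n\le 14$ and conjectured to hold in general. So there is no ``paper's own proof'' to compare against.

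Your proposal is an honest outline rather than a proof, and you yourself identify the gap correctly. The easy pieces you handle are genuinely correct: the hub option is a path and hence covered by the tree formula; the axis-spoke option for odd $n$ and the middle rim-edge option for even $n$ both fall to Lemma~\ref{twin}; and $DF_{a,a}$ reduces to a single vertex by the same reflection. Your proposed formula $\n(DF_{a,b})\in\{1,4\}$ according to the parity of $a+b$ is precisely a restatement of the rim-edge part of the conjecture (since $DF_{a,b}$ is $F_{a+b}$ with one rim edge deleted), so asserting it as a separate lemma does not buy any new leverage.

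The substantive obstacle is exactly where you say it is: the off-axis spoke deletions and the options of $DF_{a,b}$ force you into the full lattice of ``multi-fans'' and ``broken-spoke fans'', and these have no known closed form or pairing strategy. Without that, the induction does not close. Your final paragraph is therefore the accurate summary: this is an open problem, and your sketch isolates but does not overcome the missing ingredient.
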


\SpecialCoor
\psset{griddots=5,subgriddiv=0,gridlabels=0pt}
\psset{xunit=0.5cm, yunit=0.5cm, runit=0.5cm}
\psset{linewidth=1pt}
\psset{dotsize=5pt 0,dotstyle=*}

\begin{figure}[ht]
\centering
\begin{pspicture}(1,2.5)(28,10.5) 

\psset{arrowscale=2,arrowinset=0.5}


\savedata{\mydataww}[
{{5,3},{5,5},{8., 5.}, {7.59808, 6.5}, {5,5},{6.5, 7.59808}, {5., 8.}, {5,5},{3.5,
  7.59808}, {2.40192, 6.5}, {5,5},{2., 5.}}
]
\dataplot[plotstyle=line]{\mydataww}

\savedata{\mydata}[
{{5,3},{5,5},{8., 5.}, {7.59808, 6.5}, {6.5, 7.59808},  {5., 8.}, {3.5,
  7.59808}, {2.40192, 6.5}, {2., 5.}, {5,5}}
]
\dataplot[plotstyle=line]{\mydata}
\psline[linecolor=white, linewidth=2pt](7.59808, 6.5)(6.5, 7.59808)
\psline[linecolor=orange](7.59808, 6.5)(6.5, 7.59808)
\psline[linecolor=white, linewidth=2pt](3.5,7.59808)(2.40192, 6.5)
\psline[linecolor=orange](3.5,7.59808)(2.40192, 6.5)
\dataplot[plotstyle=dots]{\mydata}


\savedata{\mydataza}[
{{17,3}, {17,5}, {21., 5.}, {20.6955, 6.53073},{17,5},  {19.8284, 7.82843}, {18.5307,
  8.69552},{17,5},  {17., 9.}, {15.4693, 8.69552},{17,5},  {14.1716,
  7.82843}, {13.3045, 6.53073}, {17,5}, {13., 5.}}
]
\dataplot[plotstyle=line]{\mydataza}

\savedata{\mydataz}[
{{17,3}, {17,5}, {21., 5.}, {20.6955, 6.53073}, {19.8284, 7.82843}, {18.5307,
  8.69552}, {17., 9.}, {15.4693, 8.69552}, {14.1716,
  7.82843}, {13.3045, 6.53073}, {13., 5.}}
]
\dataplot[plotstyle=line]{\mydataz}
\psline[linecolor=white, linewidth=2pt](20.6955, 6.53073)(19.8284, 7.82843)
\psline[linecolor=orange](20.6955, 6.53073)(19.8284, 7.82843)
\psline[linecolor=white, linewidth=2pt](14.1716,7.82843)(13.3045, 6.53073)
\psline[linecolor=orange](14.1716,7.82843)(13.3045, 6.53073)
\dataplot[plotstyle=dots]{\mydataz}

\uput{3.5}[30](5,5){$2$}
\uput{3.5}[60](5,5){$2$}
\uput{3.5}[90](5,5){$2$}
\uput{3.5}[120](5,5){$2$}
\uput{3.5}[150](5,5){$2$}
\uput{3.5}[0](5,5){$4$}
\uput{3.5}[180](5,5){$4$}
\uput{0.5}[315](5,5){$0$}
\uput{0.5}[355](5,3){$2$}

\psline(25,8.5)(28,8.5)
\rput(26.5,9){$3$}
\psline[linecolor=orange](25,7)(28,7)
\rput(26.5,7.5){$0$}
\rput(26.5,5.5){$\f=1$}
\rput(26.5,4.5){$\n=1$}

\uput{4.5}[22.5](17,5){$2$}
\uput{4.5}[45](17,5){$2$}
\uput{4.5}[67.5](17,5){$2$}
\uput{4.5}[90](17,5){$2$}
\uput{4.5}[112.5](17,5){$2$}
\uput{4.5}[135](17,5){$2$}
\uput{4.5}[157.5](17,5){$2$}
\uput{4.5}[180](17,5){$4$}
\uput{4.5}[0](17,5){$4$}
\uput{0.5}[315](17,5){$0$}
\uput{0.5}[355](17,3){$2$}

\end{pspicture}
\caption{Nim-values for the move options of $F^*_8$ and $F^*_{10}$}
\label{even-fan}
\end{figure}

\SpecialCoor
\psset{griddots=5,subgriddiv=0,gridlabels=0pt}
\psset{xunit=0.5cm, yunit=0.5cm, runit=0.5cm}
\psset{linewidth=1pt}
\psset{dotsize=5pt 0,dotstyle=*}

\begin{figure}[ht]
\centering
\begin{pspicture}(1,2.5)(28,10.5) 

\psset{arrowscale=2,arrowinset=0.5}


\savedata{\mydataww}[
{{5,5},{8., 5.}, {7.70291, 6.30165},{5,5}, {6.87047, 7.34549}, {5.66756,
  7.92478},{5,5}, {4.33244, 7.92478}, {3.12953, 7.34549},{5,5}, {2.29709,
  6.30165}, {2., 5.},{5,5}}
]
\dataplot[plotstyle=line]{\mydataww}

\savedata{\mydata}[
{{5,3},{5,5},{8., 5.}, {7.70291, 6.30165}, {6.87047, 7.34549}, {5.66756,
  7.92478}, {4.33244, 7.92478}, {3.12953, 7.34549}, {2.29709,
  6.30165}, {2., 5.}}
]
\dataplot[plotstyle=line]{\mydata}
\dataplot[plotstyle=dots]{\mydata}

\uput{3.5}[25.7](5,5){$3$}
\uput{3.5}[51.4](5,5){$0$}
\uput{3.5}[77.1](5,5){$3$}
\uput{3.5}[102.9](5,5){$3$}
\uput{3.5}[128.6](5,5){$0$}
\uput{3.5}[154.3](5,5){$3$}

\uput{3.5}[0](5,5){$1$}
\uput{3.5}[180](5,5){$1$}
\uput{0.5}[315](5,5){$3$}
\uput{0.5}[355](5,3){$3$}


\savedata{\mydataww}[
{{17,5}, {21., 5.}, {20.7588, 6.36808},{17,5},  {20.0642, 7.57115}, {19.,
  8.4641},{17,5},  {17.6946, 8.93923}, {16.3054, 8.93923}, {17,5}, {15.,
  8.4641}, {13.9358, 7.57115},{17,5},  {13.2412, 6.36808}, {13., 5.},{17,5}}
]
\dataplot[plotstyle=line]{\mydataww}

\savedata{\mydata}[
{{17,3},{17,5},{21., 5.}, {20.7588, 6.36808}, {20.0642, 7.57115}, {19.,
  8.4641}, {17.6946, 8.93923}, {16.3054, 8.93923}, {15.,
  8.4641}, {13.9358, 7.57115}, {13.2412, 6.36808}, {13., 5.}}
]
\dataplot[plotstyle=line]{\mydata}
\dataplot[plotstyle=dots]{\mydata}

\psline(25,7)(28,7)
\rput(26.5,7.5){$2$}

\rput(26.5,5.5){$\f=0$}
\rput(26.5,4.5){$\n=4$}

\uput{4.5}[20](17,5){$3$}
\uput{4.5}[40](17,5){$0$}
\uput{4.5}[60](17,5){$3$}
\uput{4.5}[80](17,5){$3$}
\uput{4.5}[100](17,5){$3$}
\uput{4.5}[120](17,5){$3$}
\uput{4.5}[140](17,5){$0$}
\uput{4.5}[160](17,5){$3$}

\uput{4.5}[180](17,5){$1$}
\uput{4.5}[0](17,5){$1$}
\uput{0.5}[315](17,5){$3$}
\uput{0.5}[355](17,3){$3$}

\end{pspicture}
\caption{Nim-values for the move options of $F^*_9$ and $F^*_{11}$}
\label{odd-fan}
\end{figure}
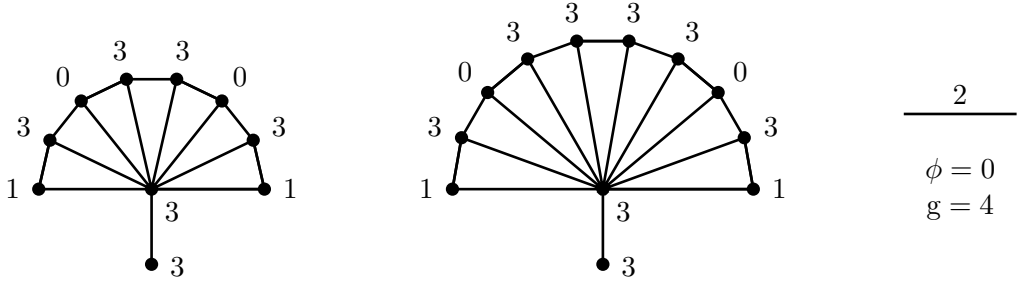

\begin{conj}{\rm (Fan with a handle options.)}
\begin{enumerate}
  \item For all  even $n\gqs 6$ the nim-values of the options in the fan $F^*_n$ are as follows. All vertices have value $2$ except the degree $2$ vertices which have value $4$ and the central vertex with value $0$. All edges have value $3$ except two on the rim a distance $1$ from the degree $2$ vertices. They have  value $0$. See the examples in Figure  \ref{even-fan}.
  \item For all  odd $n\gqs 7$ the nim-values of the options in the fan $F^*_n$ are as follows. Almost all vertices have value $3$. The exceptions are the degree $2$ vertices with value $1$ and the two vertices on the rim a distance $2$ from these with value $0$. All edges  have value $2$. Hence $\n(F^*_n)=4$ for $n$ odd. See the examples in Figure  \ref{odd-fan}.
\end{enumerate}
\end{conj}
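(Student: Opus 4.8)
The plan is to establish both parts of the conjecture simultaneously by a single strong induction on $n$, carried out jointly over the fan family $F_n$ and the fan-with-handle family $F^*_n$. The two families are naturally coupled: deleting the rim edge at the end of the path in $F_n$ turns that endpoint into a pendant and produces exactly $F^*_n$, while (as noted below) deleting the handle of $F^*_n$ produces $F_{n-1}$. Throughout, the ``value'' of a vertex or edge means the nim-value of the reduced graph left after deleting it, so the conjectured lists are precisely the argument of the $\mex$ in \eqref{mex}; once the lists are verified, the stated values $\n(F^*_n)=1$ for $n$ even and $\n(F^*_n)=4$ for $n$ odd follow immediately, and likewise for $F_n$. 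The base cases are the values already computed for small $n$.

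First I would dispose of the moves that reduce cleanly to smaller members of the two families. Deleting the hub leaves the rim path together with the isolated handle vertex; this graph is a forest, so Proposition \ref{par} gives its value as $\f=|V|_{(2)}+2(|E|_{(2)})=3\cdot n_{(2)}$, namely $0$ for $n$ even and $3$ for $n$ odd, matching the ``central vertex'' entries. Deleting the handle removes only its spoke and leaves exactly $F_{n-1}$, so its value is $\n(F_{n-1})$; deleting a degree-$2$ rim endpoint removes one rim edge and one spoke and leaves exactly $F^*_{n-1}$, so its value is $\n(F^*_{n-1})$. Feeding in the inductive hypothesis together with the companion Fan options claim (whose asserted whole-graph values are $3$ for even fans and $2$ for odd fans) yields the entries $2,4$ for $n$ even and $3,1$ for $n$ odd, as required; and deleting the handle's spoke isolates the handle and leaves $F_{n-1}$, contributing $\n(F_{n-1})\oplus 1$, which is $3$ for $n$ even and $2$ for $n$ odd. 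Each such value lands exactly where the conjecture predicts.

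The substantive content is the family of moves that split the rim. Deleting an interior degree-$3$ rim vertex, or an interior rim edge, leaves the hub adjacent to the vertices of two disjoint rim paths together with the handle; deleting an interior spoke leaves a fan-with-handle carrying one de-spoked interior vertex. None of these lie in $\{F_n,F^*_n\}$, so to close the induction I would introduce an auxiliary ``double fan'' family --- a hub adjacent to two rim paths of prescribed lengths, optionally bearing a handle and optionally missing some spokes --- and prove a companion nim-value formula for it by the same simultaneous induction. The two value-$0$ rim edges that the even case singles out ``a distance $1$ from the degree-$2$ vertices'' should arise as the degenerate splits in which one side of the double fan collapses to a single pendant: there the two pendants on the hub are isomorphic and cancel, or a direct application of the symmetry Lemma \ref{twin} collapses the position, pinning the value to $0$.

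I expect the double-fan family to be the real obstacle. Unlike the one-cycle graphs of Section \ref{1cyc}, a split rim carries no cancellation structure, and there is no evident closed form for the nim-value of a hub joined to two rim paths that is stable under all of that family's own moves, since interior vertex, interior edge, and spoke deletions each split or de-spoke it further and generate still more families. The crux is therefore to find a single self-consistent system of option values for the enlarged collection $\{F_n,\,F^*_n,\,\text{double fans, de-spoked variants}\}$ that the $\mex$ recursion \eqref{mex} reproduces with period $2$ in $n$; producing and verifying such a system --- rather than any one of the computations above --- is what remains, and is presumably why the statement stands only as a conjecture.
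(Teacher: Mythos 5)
The statement you are trying to prove is not proved in the paper at all: it is stated as a conjecture, supported only by the computer calculation of all subgraphs of $W_n$ for $n\lqs 14$, by the symmetry-lemma observation that $\n(F^*_n)=1$ for $n$ even, and by the remark that the two highlighted rim edges give value $0$ because the position simplifies to a cycle. So there is no ``paper proof'' to match your proposal against, and your proposal is, by your own admission, not a proof either. That said, the parts you do carry out are correct and consistent with the conjectured tables: deleting the hub leaves a forest of value $3\cdot n_{(2)}$ by Proposition \ref{par}; deleting the handle gives $F_{n-1}$, deleting a degree-$2$ rim endpoint gives $F^*_{n-1}$, and deleting the handle's spoke gives $F_{n-1}\cup K_1$ with value $\n(F_{n-1})\oplus 1$; and the two special rim edges do collapse, via Lemma \ref{twin}, to a triangle with a cancelling pair of pendants at the hub, hence value $0$. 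These all land on the conjectured entries, assuming the companion Fan options conjecture.

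The genuine gap is exactly the one you name: every interior rim-vertex, interior rim-edge, and interior-spoke deletion leaves a ``double fan'' (a hub joined to two disjoint rim paths, with a handle and possibly a missing spoke), and these positions lie outside the family $\{F_n,F^*_n\}$, so the induction cannot close without a separate nim-value formula for them. Nothing in the paper supplies such a formula --- the closest analogues, Theorems \ref{trro} and \ref{thmxyz}, require all but one or two vertices to have degree at most $2$, which fails badly here since every interior rim vertex of a double fan has degree $3$. Note also that your proposal leans on the Fan options conjecture for the values $\n(F_{n-1})=2,3$, which is itself unproved for $n-1$ even, so even the ``easy'' entries are only conditional. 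In short: your reductions are a correct and worthwhile partial verification, but the crux you isolate (a self-consistent option-value system for the double-fan family stable under \eqref{mex}) is precisely the open problem, and until it is solved the statement remains a conjecture.
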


It is clear by the symmetry lemma that $\n(F_n)=2$ for $n$ odd and that deleting the edge on the axis of symmetry gives nim-value $0$. Similarly we may show that $\n(F^*_n)=1$ for $n$ even and the highlighted edges in Figure \ref{even-fan} give nim-value $0$ when removed because the graph simplifies to a cycle.
It is remarkable that, except for those just mentioned, all other edges in each $F^*_n$ seem to have the same nim-value: $3$ for $n$ even and $2$ for $n$ odd. Perhaps proving the patterns in these conjectures requires characterizing when $\n(H)=\f(H)$ for subgraphs of $W_n$, similarly to Theorem \ref{AB2}.

Computer calculations indicate that a subgraph $H$ of $W_n$ with $\f(H)=2$ never has  nim-value $0$. This leads us to the following conjecture.

\begin{conj} \label{edcon}
Let $H$ be any subgraph of $W_n$ for $n \gqs 3$. Suppose $\f(H)=2$ (i.e. $H$ has an even number of vertices and an odd number of edges). Then there exists an edge of $H$ so that removing it gives a graph with nim-value $0$.
\end{conj}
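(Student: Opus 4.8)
The plan is to exploit the parity bookkeeping forced by the hypothesis $\f(H)=2$. This condition says $H$ has an even number of vertices and an odd number of edges, so deleting any single edge $e$ leaves a graph $H-e$ with an even number of vertices and an even number of edges; hence $\f(H-e)=0$. By Proposition \ref{par}, whenever $H-e$ is bipartite we automatically obtain $\n(H-e)=\f(H-e)=0$. Thus it suffices to find one edge whose deletion makes the graph bipartite, or more generally produces a graph that the symmetry lemma (Lemma \ref{twin}) collapses to nim-value $0$.

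First I would dispose of the case where the hub $c$ is absent from $H$. Then $H$ is a subgraph of the rim cycle $C_n$, hence a disjoint union of paths, and in particular bipartite. Since $|E(H)|$ is odd there is at least one edge $e$, and $H-e$ is again a disjoint union of paths, so it is bipartite with $\f(H-e)=0$ and $\n(H-e)=0$ by Proposition \ref{par}. This settles the hub-absent case completely.

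The substance of the conjecture lies in the hub-present case. Write $H$ as the hub $c$ together with the rim vertices it retains, the spokes present, and the rim edges present, the latter grouping the rim vertices into maximal arcs. A first observation is that, with $c$ present and $\f(H)=2$, the number of retained rim vertices is odd, since $|V(H)|=(\text{rim vertices})+1$ is even. Every odd cycle of $H$ is either a cycle through $c$ using two spokes and an odd-length rim arc, or the whole rim cycle when $n$ is odd and all rim edges survive. I would try to produce a good edge in two complementary ways: (a) if all odd cycles share a common edge---the typical situation when $H$ is fan-like with a single triangulated region---then deleting that edge makes $H-e$ bipartite and we finish as above; (b) otherwise I would search for a spoke $cv$ whose deletion either merges two triangular regions into a single even cycle (for instance the middle spoke of $F_3$ leaves the $4$-cycle $C_4$, which has $\n=0$), or creates an involution $\tau$ satisfying the hypotheses of Lemma \ref{twin} whose fixed subgraph has nim-value $0$ (for instance the middle spoke of $F_5$ leaves a graph whose fixed subgraph is a pair of isolated vertices of nim-sum $1\oplus 1=0$, exactly the mechanism driving the strategy in Theorem \ref{wheel}).

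The hard part, and the reason this remains a conjecture, is that a general subgraph of $W_n$ need neither be bipartizable by a single edge deletion nor carry any symmetry, so neither (a) nor (b) applies directly. To treat the general configuration I would set up an induction on the number of spokes, peeling off rim arcs and spokes so as to reduce to bipartite building blocks together with the fans $F_n$ and fans with a handle $F^*_n$, and then track the option values by a mex computation in the style of Theorems \ref{AB2}, \ref{trro} and \ref{thmxyz}. The obstruction is that these reductions pass through many non-bipartite intermediate graphs whose nim-values are not governed by $\f$; controlling them appears to require a prior characterization of exactly when $\n(H)=\f(H)$ for subgraphs of $W_n$---an analogue for the wheel of Theorem \ref{AB2}, of which the conjectured value patterns for the options of $F_n$ and $F^*_n$ are special cases. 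Establishing that characterization is the main obstacle: once it is in hand, it should pin down the edge-deletion values and, through the parity shift from $\f=2$ to $\f=0$, force $0$ to occur among them.
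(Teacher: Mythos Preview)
The statement you are addressing is Conjecture \ref{edcon}, and the paper does \emph{not} prove it. The only evidence offered in the paper is a computer verification that the conjecture holds for all subgraphs $H$ of $W_n$ with $\f(H)=2$ in the range $3\lqs n\lqs 14$, together with the remark that it fails for general graphs (the three examples in Figure \ref{exep}). There is therefore no proof in the paper to compare your proposal against.

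Your write-up is honest about this: you explicitly call it ``the reason this remains a conjecture'' and describe the inductive scheme you would attempt rather than claiming to carry it through. The hub-absent case you give is correct and complete (one should note in passing that $H=C_n$ itself is excluded since $\f(C_n)\in\{0,3\}$, so $H$ really is a disjoint union of paths). Your diagnosis of the obstruction in the hub-present case is also accurate and matches the paper's own assessment: the author writes that ``proving the patterns in these conjectures requires characterizing when $\n(H)=\f(H)$ for subgraphs of $W_n$, similarly to Theorem \ref{AB2}.'' That is exactly the missing ingredient you identify. So your proposal is not a proof, but it is a reasonable outline of where the difficulty lies, and it is consistent with what the paper says about the problem.
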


This conjecture is true for all subgraphs with $\f=2$ of $W_n$  for $3\lqs n \lqs 14$. For example, the graphs shown in Figure \ref{odd-hd} have $\f=2$ and contain a single edge move with nim-value $0$. The conjecture is also true for bipartite graphs but not for general graphs and fails for instance for  the three graphs shown in Figure \ref{exep}.
\SpecialCoor
\psset{griddots=5,subgriddiv=0,gridlabels=0pt}
\psset{xunit=0.5cm, yunit=0.5cm, runit=0.5cm}
\psset{linewidth=1pt}
\psset{dotsize=5pt 0,dotstyle=*}
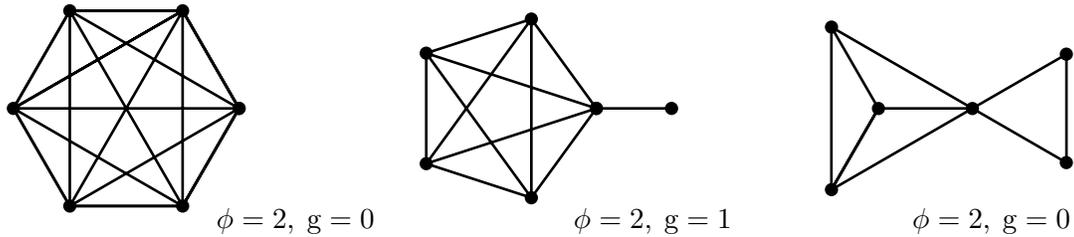
\begin{figure}[ht]
\centering
\begin{pspicture}(2,1.5)(30,8) 

\psset{arrowscale=2,arrowinset=0.5}


\savedata{\mydataww}[
{{8., 5.}, {6.5, 7.59808}, {3.5, 7.59808}, {8., 5.},  {2., 5.}, {3.5,
  2.40192}, {8., 5.}, {6.5, 2.40192}, {8., 5.}, {6.5, 7.59808}, {3.5, 7.59808}, {2., 5.}, {6.5, 7.59808}, {3.5,
  2.40192}, {6.5, 2.40192}, {6.5, 7.59808}, {2., 5.}, {6.5, 2.40192},  {3.5, 7.59808}, {3.5,
  2.40192} }
]
\dataplot[plotstyle=line]{\mydataww}

\savedata{\mydata}[
{{8., 5.}, {6.5, 7.59808}, {3.5, 7.59808}, {2., 5.}, {3.5,
  2.40192}, {6.5, 2.40192}, {8., 5.}}
]
\dataplot[plotstyle=line]{\mydata}
\dataplot[plotstyle=dots]{\mydata}


\savedata{\mydataww}[
{{17.5, 5.},  {12.9775, 6.46946},  {15.7725, 2.62236}, {15.7725, 7.37764},{12.9775,
  3.53054}, {17.5, 5.}}
]
\dataplot[plotstyle=line]{\mydataww}

\savedata{\mydata}[
{{17.5, 5.}, {15.7725, 7.37764}, {12.9775, 6.46946}, {12.9775,
  3.53054}, {15.7725, 2.62236}, {17.5, 5.}}
]
\dataplot[plotstyle=line]{\mydata}
\dataplot[plotstyle=dots]{\mydata}

\psline(17.5, 5.)(19.5, 5.)
\psdot(19.5,5)

\psline(27.5,5)(30,6.44338)(30,3.55662)(27.5,5)(25,5)(23.75, 2.83494)(23.75, 7.16506)(25,5)(23.75, 2.83494)(27.5,5)(23.75, 7.16506)
\psdots(25,5)(27.5,5)(23.75, 7.16506)(23.75, 2.83494)(30,6.44338)(30,3.55662)
\rput(9.5,2){$\f=2, \ \n=0$}
\rput(19,2){$\f=2, \ \n=1$}
\rput(28,2){$\f=2, \ \n=0$}

\end{pspicture}
\caption{Three examples with $\f=2$ but no winning edge moves}
\label{exep}
\end{figure}
These graphs have $\f=2$ but no edge moves give nim-value $0$ and a computer search shows they are the only graphs on $6$ or fewer vertices with this property. Interestingly, they are of the form $K_i \cup K_j$ for $i+j=7$ with a vertex of $K_i$ and  $K_j$ identified.

We list five straightforward consequences of Conjecture \ref{edcon} with $H$ any subgraph of a wheel:
\begin{enumerate}
  \item If $H$ has an odd number of edges then there exists an edge of $H$ so that removing it gives a nim-value of $\f(H)\oplus 2$.
  \item If $H$ has an even number of edges then $\n(H)=\f(H)$ or there exists an edge of $H$ so that removing it gives a nim-value of $\f(H)\oplus 2$ (or both).
  \item If $H$ has an even number of vertices then $\n(H)=0$ implies $\f(H)=0$.
  \item If $\f(H)=0$ then any winning move must remove a vertex.
  \item Lastly we note that Conjecture \ref{edcon} implies  Conjecture \ref{conjw}, i.e. that $\n(W_n)$ always equals $1$. To see this implication, recall from the proof of Theorem \ref{wheel} that $\n(W_n)=1$ for $n$ odd follows if we can show that there is a winning response to $Q_n$ with a spoke removed. Since $Q_n$ with a spoke removed has $\f=2$, Conjecture \ref{edcon} implies there is a winning edge response.
\end{enumerate}


{\small
\bibliography{gamedata}
}

{\small 
\vskip 5mm
\noindent
\textsc{Dept. of Math, The CUNY Graduate Center, 365 Fifth Avenue, New York, NY 10016-4309, U.S.A.}

\noindent
{\em E-mail address:} \texttt{cosullivan@gc.cuny.edu}
}

\end{document}